\newcommand*\bb{\mathbb}
\newcommand *\w{^\wedge}
\newcommand*\g{\mathfrak{g}}
\DeclareMathOperator*{\esup}{ess\,sup}
\DeclareMathOperator*{\eosc}{ess\,osc}
\newcommand*\de{\partial}
\newcommand*\tl{\mathrm{Tail}}
\newtheorem{thm}{Theorem}[section]
\theoremstyle{definition}
\newtheorem{defn}[thm]{Definition}
\newtheorem{cor}[thm]{Corollary}
\newtheorem{prop}[thm]{Proposition}
\newtheorem{lem}[thm]{Lemma}
\newtheorem{rem}[thm]{Remark}
\DeclarePairedDelimiterX{\inp}[2]{\langle}{\rangle}{#1, #2}
\def\@tocline#1#2#3#4#5#6#7{\relax
	\ifnum #1>\c@tocdepth 
	\else
	\par \addpenalty\@secpenalty\addvspace{#2}%
	\begingroup \hyphenpenalty\@M
	\@ifempty{#4}{%
		\@tempdima\csname r@tocindent\number#1\endcsname\relax
	}{%
		\@tempdima#4\relax
	}%
	\parindent\z@ \leftskip#3\relax \advance\leftskip\@tempdima\relax
	\rightskip\@pnumwidth plus4em \parfillskip-\@pnumwidth
	#5\leavevmode\hskip-\@tempdima
	\ifcase #1
	\or\or \hskip 1em \or \hskip 2em \else \hskip 3em \fi%
	#6\nobreak\relax
	\dotfill\hbox to\@pnumwidth{\@tocpagenum{#7}}\par
	\nobreak
	\endgroup
	\fi}
\newcommand{\overbar}[1]{\mkern 1.5mu\overline{\mkern-1.5mu#1\mkern-1.5mu}\mkern 1.5mu}
\numberwithin{equation}{section}
\title{Local H\"older regularity for nonlocal porous media and fast diffusion equations with general kernel}
\author{Kyeongbae Kim $^{\ast}$}
\author{Ho-Sik Lee$^{\dagger}$}
\author{Harsh Prasad$^{\S}$}
\address{$^{\ast}$ Department of Mathematical Sciences, Seoul National University, Seoul 08826, Korea.}
\address{$^{\dagger}$$^{\S}$ Fakultät für Mathematik, Universität Bielefeld, Postfach 100131, D-33501 Bielefeld, Germany.}
\email{$^{\ast}$ kkba6611@snu.ac.kr}
\email{$^{\dagger}$ ho-sik.lee@uni-bielefeld.de}
\email{$^{\S}$ hprasad@math.uni-bielefeld.de}
\thanks{$^{\ast}$ Support from the National Research Foundation of Korea (NRF) through IRTG 2235/NRF-2016K2A9A2A13003815 at Seoul National University.}
\thanks{$^{\dagger}$ Support from funding of the Deutsche Forschungsgemeinschaft through GRK 2235/2 2021 - 282638148.}
\thanks{$^{\S}$ Support from Deutsche Forschungsgemeinschaft (DFG, German Research Foundation) – Project-ID 317210226 – SFB 1283 is gratefully acknowledged}
\date{April 2025}
\begin{document}
\begin{abstract}
We show that locally bounded, local weak solutions to certain nonlocal, nonlinear diffusion equations modeled on the fractional porous media and fast diffusion equations given by
\begin{align*}
\de_t u + (-\Delta)^s(|u|^{m-1}u) = 0 \quad \mbox{ for } \quad 0<s<1 \quad\text{and}\quad m>0
\end{align*}
are locally H\"older continuous. We work with bounded, measurable kernels and provide the corresponding $L^{\infty}_{loc} \rightarrow C^{0,\alpha}_{loc}$ De Giorgi-Nash-Moser theory for the equation via a delicate analysis of the set of singularity/degeneracy in a geometry dictated by the solution itself and a careful analysis of far-off effects. In particular, our results are in the spirit of interior regularity, requiring the equation to hold only locally, and thus are new even for positive solutions of the equation with constant coefficients. 
\end{abstract}
\maketitle
\noindent \textbf{Keywords.} Porous Media Equations, Fast Diffusion Equations, Nonlocal Parabolic Equations, Fractional Diffusion Equations, Fractional Porous Media Equations, Fractional Fast Diffusion Equations, De Giorgi-Nash-Moser theory

\noindent\textbf{\textup{2020} Mathematics Subject Classification.} {Primary: 
35R09, 
35B65; 
Secondary: 
47G20, 
35K55
}

\smallskip
	
\tableofcontents
\section{Introduction}
\subsection{The Problem} We are interested in studying the equation 
\begin{equation}\label{eq:main-eq}
\de_t u - L\phi(u) = 0\quad\text{in }\Omega\times (0,T],
\end{equation}
where $\Omega\subset\mathbb{R}^n$ ($n\in\mathbb{N}$ and $n\geq 2$) is bounded and open, $T>0$, and the nonlinearity $\phi:\mathbb{R}\rightarrow\mathbb{R}$ is of the following type 
\begin{align}\label{eq:phi}
\phi(X) =|X|^{m-1}X \quad \mbox{ for } \quad m>0
\end{align}
and the operator $L$ is modeled on the fractional Laplacian and given by the principal value integral
\begin{align*}
Lv(x,t) = \text{p.v.}\int_{\bb{R}^n}(v(y,t)-v(x,t))K(x,y;t) \,dy
\end{align*}
with $n\geq 2$ for a kernel $K:\bb{R}^n\times\bb{R}^n\times\bb{R} \rightarrow [0,\infty)$ which is measurable and satisfies
\begin{equation}\label{eq:kernel}
\lambda |x-y|^{-n-2s} \leq K(x,y;t) \leq \Lambda |x-y|^{-n-2s}
\end{equation}
for two fixed positive constants $\lambda, \Lambda > 0$. Here and throughout the paper, we always assume that 
\begin{align*}
s\in(0,1).
\end{align*}	
\subsection{The Context} The equation \cref{eq:main-eq} is modelled on the fractional diffusion equation (FDE) given by
\begin{align*}
    \partial_t u + (-\Delta)^s(|u|^{m-1}u) = 0 \quad (s\in(0,1), m>0),
\end{align*}
which is itself a nonlocal analogue of the classical diffusion equation
\begin{align*}
    \partial_t u -\Delta(|u|^{m-1}u) = 0.
\end{align*}
The nonlinear local diffusion equation finds applications in modelling density-dependent diffusive behaviours, such as plasma diffusion across magnetic fields \cite{Oku73, Dra77} and phenomena described by scaling laws and self-similarity as detailed in the theory of the porous medium equation \cite{Bar96, Vas07}. On the other hand, nonlocality, introduced via the fractional Laplacian $(-\Delta)^s$, is crucial to capture phenomena involving long-range interactions or anomalous diffusion, relevant in areas such as mechanics and physics involving constraints \cite{lions72}, stochastic hydrodynamic limits \cite{Gia99}, and broader frameworks of nonlocal diffusion processes \cite{Caf12}.

The nonlocal FDE has been a subject of extensive mathematical investigation. Foundational studies established the well-posedness of the Cauchy problem for the fractional porous medium equation \cite{dePab11, dePab12}, explored existence and uniqueness for measure data \cite{Gri15}, analysed distributional solutions \cite{del17}, and investigated fundamental solutions and asymptotic behaviour like Barenblatt profiles \cite{Vaz14}, including optimal estimates for fast diffusion \cite{Vaz15}. There have also been results demonstrating H\"older continuity for singular parabolic cases \cite{kim11} using extension arguments, deriving quantitative local and global estimates \cite{Bon14}, establishing higher regularity and classical solutions \cite{Vaz17}, studying regularity for equations with rough kernels \cite{dePab16} or singular diffusion \cite{dePab18} for equations posed in the full space, analysing propagation effects like infinite speed \cite{Bon17}, investigating transformations of self-similar solutions \cite{Sta15}, and considering the Cauchy problem in bounded domains \cite{Bon23}. Numerical analysis of the equation has also progressed, with works developing robust numerical schemes based on semigroup theory \cite{Cusi20} and finite difference methods, including theoretical analysis and experimental validation \cite{num19, num18}.

We are interested in studying local regularity along the lines of the De Giorgi-Nash-Moser theory. More specifically, we are interested in local H\"older bounds for locally bounded weak solutions to \cref{eq:main-eq}. In the linear setup when $m=1$, analogous regularity results were previously established: a priori estimates for integro-differential operators provided regularity for the elliptic case \cite{Kas09}, and regularity theory, including H\"older continuity, was developed for the linear parabolic case \cite{CCV11}. For nonlocal parabolic equations with degeneracy and singularity dependent on a fractional '\emph{gradient}' term (like the fractional p-Laplacian), H\"older regularity was obtained in \cite{apt, Lia24}. On the other hand, equation \eqref{eq:main-eq} exhibits singularity for the fast diffusion case ($m<1$) and degeneracy for the porous media case ($m>1$) - both directly at the level of the solution $u$ itself.
\subsection{Main Ideas} The key issues in studying the local regularity of \cref{eq:main-eq} are:
\begin{itemize}
\item[1.] The singularity/degeneracy of the term $|u|^{m-1}$ when $u \approx 0$.
\item[2.] The nonlocal nature of the operator wherein far off values of the solution $u$ can affect its regularity in a small open set.
\item[3.] The absence of a De Giorgi isoperimetric inequality for fractional Sobolov spaces which is crucial in the local setup.   
\end{itemize}
We note that even if the solution is assumed to be bounded, the singularity/degeneracy can still cause problems since the solution can be close to $0$. To deal with this, we separate our analysis into two phases - one where the solution is close to zero and one where it is away - and employ a variant of the switching radius argument. More precisely, we first study the solution close to zero and show that the equation forces a reduction in the supremum as we zoom in; further, the reduction continues as long as the solution stays close to zero. In particular, if the solution is always close to zero, then we are done. Thus we now switch to the case where the solution is away from zero in some cylinder and at this point, it turns out that we are almost in the linear setup from which we can conclude oscillation decay. Thus, the analysis boils down to studying the solution $u$ in the set $\{u\approx 0\}$. We do this via suitably scaling the cylinders to match the singularity/degeneracy in terms of the solution itself i.e. we work in a geometry dictated by the solution. These ideas are not new, and our work draws much from previous works on the subject such as the study of $C^{1+\alpha}$ regularity for (elliptic) $p-$Laplace equation \cite{DiB83} and $C^{\alpha}$ regularity for porous media systems \cite{Lia21}; these ideas themselves, of course, go back to De Giorgi's seminal work \cite{DeG}.  
	
For the non-locality, one could impose global boundedness but we are interested in local regularity, thus it is not an appropriate assumption to make. Furthermore, unlike the linear case, one cannot go from global boundedness to local boundedness with the addition of a source term due to the nonlinearity of $\phi$. 

Finally, the De Giorgi isoperimetric inequality is an important tool in regularity theory for local equations; however, such an isoperimetric inequality in the fractional case necessarily comes with a jump term \cite[Theorem 4]{apt-ellip}. In the absence of the inequality, one must usually resort to logarithmic estimates to control jumps, but such estimates are currently unavailable in the nonlocal nonlinear parabolic setup. To ameliorate the situation, we get a "good term" or the isoperimetric term in the energy estimate itself - the idea first appeared in \cite{CCV11}; we make use of a variant as in \cite{apt}. With the isoperimetric inequality at hand, we are left to deal with the non-locality of the operator itself. We do this via tail alternatives - which say that if the tail is not too large then we get the estimate we want. Finally, we need to make sure that the tails are in fact not too large and we are able to get this by zooming further and further away followed by delicately adding up the contributions. 

\subsection{Main Results} We now state our main results and provide an outline of the paper. 
Denoting
\begin{align*}
\textbf{data}=\{n,s,m,\lambda,\Lambda\},
\end{align*}
and using the notion of weak solution defined in Section \ref{sec:prel}, we now state our main theorem.

\begin{thm}\label{thm:hol}
For $s\in(0,1)$ and $m>0$, let $u$ be a locally bounded, local weak solution to \cref{eq:main-eq} in $\Omega_T$ with \cref{eq:phi} and \cref{eq:kernel}. Then $u\in C^{0,\alpha}_{\text{loc}}(\Omega_T)$ for some $\alpha\in(0,1)$ depending on $\textbf{data}$. 
\end{thm}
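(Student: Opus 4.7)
The plan is to establish H\"older regularity via the classical oscillation-decay argument, but carried out in cylinders whose parabolic geometry is intrinsically rescaled by the size of the solution itself, so as to absorb the singular/degenerate factor $|u|^{m-1}$ into the cylinder. Concretely, for a fixed point $(x_0,t_0) \in \Omega_T$ I would construct a nested sequence of cylinders $Q_j$ centered at $(x_0,t_0)$ together with constants $\omega_j$ decreasing to $0$ such that $\eosc_{Q_j} u \leq \omega_j$ and $\omega_{j+1} \leq \eta \omega_j$ for some $\eta \in (0,1)$ depending only on \textbf{data}. The time scale of $Q_j$ will be multiplied by the solution-dependent factor $\omega_j^{1-m}$ relative to the spatial scale to the power $2s$, so that the equation formally becomes non-singular/non-degenerate at the level of the rescaled solution $\omega_j^{-1} u$.

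The first substantial ingredient is a Caccioppoli-type energy inequality for the truncations $(u-k)_\pm$. Testing the weak formulation against a suitable truncation of $\phi(u)-\phi(k)$ (carefully cut off both in space and in time, and with the nonlinearity $\phi$ integrated by parts in time through its primitive) would yield, besides the usual kinetic term and Gagliardo-type nonlocal seminorm, an additional \emph{good} nonlocal cross-term of the form $\int\!\!\int (u(x)-k)_+\,\mathbf{1}_{\{u(y)\leq k\}}\,K(x,y;t)\,dy\,dx$. Following \cite{CCV11} and the parabolic variant in \cite{apt}, this good term serves as a substitute for the missing fractional De Giorgi isoperimetric inequality: whenever the level set $\{u \leq k\}$ occupies a definite fraction of the cylinder, the good term controls a positive power of the measure of $\{u \gtrsim k\}$. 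The tail of $u$ must be inserted as a forcing on the right-hand side, and its smallness is maintained as an inductive hypothesis along the scales $Q_j$.

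With the energy inequality at hand, oscillation decay at a single scale splits into a dichotomy. In the \emph{zero regime}, where $u$ is close to $0$ on most of the cylinder, I would run a De Giorgi-type iteration on the levels $k_n \to 0$ to obtain a pointwise $L^\infty$ reduction of $|u|$ itself; as long as this alternative persists, the reduction can be iterated and already yields H\"older decay at the value $0$. In the \emph{non-degenerate regime}, where $u$ stays away from $0$ on some subcylinder, the coefficient $\phi'(u) = m|u|^{m-1}$ is bounded between two positive constants (dictated by $\omega_j^{m-1}$), so on the intrinsically-scaled cylinder the equation is uniformly of linear nonlocal type; oscillation decay then follows by the standard De Giorgi--Nash--Moser machinery for linear nonlocal parabolic equations along the lines of \cite{Kas09, CCV11}. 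The switching between the two regimes is governed by a switching-radius argument: one stays in the zero regime as long as the shrinking supremum forces the alternative to hold, and the moment it fails, the solution is large enough on a definite subcylinder that the non-degenerate analysis applies; the two reductions are set up to be compatible under rescaling.

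The step I expect to be the principal obstacle is controlling the nonlocal tail inductively in the intrinsic geometry. Because the time scale of $Q_j$ is twisted by $\omega_j^{1-m}$, each tail contribution $\tl(u; r_j)$ at scale $r_j$ carries factors of $\omega_j$ that need to be compared with the targeted oscillation reduction; a naive estimate would lose control over the tails after finitely many steps. I would address this via a \emph{tail alternative}: if the tail is small compared to $\omega_j$, the energy-plus-good-term argument closes and gives the desired reduction, while otherwise I would zoom out to a much larger cylinder, exploit the local boundedness of $u$ there to bound its contribution, and telescope the tail contributions of previous scales by a geometrically summable series. Once this bookkeeping is set up, the inductive reduction propagates at every scale and produces a universal rate $\omega_j \sim \eta^j$, from which the H\"older exponent $\alpha \in (0,1)$, depending only on \textbf{data}, is read off in the standard way.
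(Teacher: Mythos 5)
Your proposal follows essentially the same strategy as the paper: intrinsically rescaled cylinders (time or, equivalently after a parabolic dilation, space stretched by a power of the local oscillation), Caccioppoli inequalities that retain the ``good'' nonlocal cross-term as a surrogate for the missing fractional De Giorgi isoperimetric inequality, a dichotomy between a zero regime where the supremum is forced to shrink and a non-degenerate regime where the rescaled equation is uniformly linear in the nonlocal spatial part and has bounded elliptic coefficients in time, and a telescoping estimate for the tails along the nested scales. The only small imprecision is your claim of iterating De Giorgi levels $k_n \to 0$ in the zero regime; in the paper the levels actually climb toward $2M$ (the local supremum), and it is the supremum itself, not the level, that is reduced by a fixed factor per step---but this does not change the structure of the argument, and your plan is otherwise faithful to the paper's proof.
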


\begin{rem}
The corresponding estimate for Theorem \ref{thm:hol} is in Theorem \ref{thm:hol2}.
\end{rem}
\begin{rem}
Our result is independent of any initial or boundary value that may be prescribed and does not depend on the regularity of the coefficients. It is an interior regularity result - the equation is posed only locally - in the spirit of De Giorgi-Nash-Moser theory. 
\end{rem}
\begin{cor}[Liouville]\label{cor:Lio}
If $u$ solves \cref{eq:main-eq} in $\bb{R}^n\times(-\infty,T)$ for some $-\infty<T<\infty$ and if $u$ is bounded then $u$ is a constant. 
\end{cor}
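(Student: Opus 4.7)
The plan is to combine the intrinsic scale invariance of \cref{eq:main-eq} with the quantitative Hölder estimate underlying Theorem \ref{thm:hol} (cf.\ Theorem \ref{thm:hol2}). For any $R>0$, set
\begin{align*}
v_R(x,t) := u(Rx,\, R^{2s}t).
\end{align*}
A direct change of variables shows that $v_R$ is again a bounded weak solution of an equation of the same form on $\mathbb{R}^n \times (-\infty,\, T/R^{2s})$, with rescaled kernel $K_R(x,y;t) := R^{n+2s} K(Rx, Ry; R^{2s}t)$ still satisfying \cref{eq:kernel} with the same constants $\lambda,\Lambda$. Moreover $\|v_R\|_{L^\infty} = \|u\|_{L^\infty}$, and a routine computation using \cref{eq:kernel} together with the global boundedness of $u$ shows that any nonlocal tail quantity of the form $r^{2s}\int_{|z|>r}|v_R(z)|^{m}|z|^{-n-2s}\,dz$ appearing in the estimate is controlled by $C(n,s)\|u\|_{L^\infty}^{m}$, \emph{uniformly in $R$}, since the kernel decay $R^{-n-2s}$ and weight $R^{2s}$ exactly compensate the volume factor $R^n$ coming out of the rescaling.

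Fix arbitrary $P_1=(x_1,t_1),\,P_2=(x_2,t_2)\in\mathbb{R}^n\times(-\infty,T)$ and let $P_i/R := (x_i/R,\, t_i/R^{2s})$. For $R$ sufficiently large the points $P_1/R, P_2/R$ both lie inside a fixed cylinder $Q$ compactly contained in $\mathbb{R}^n\times(-\infty, T/R^{2s})$, so Theorem \ref{thm:hol2} applies to $v_R$ on $Q$ with constant depending only on $\textbf{data}$ and $\|u\|_{L^\infty}$ (through the uniformly bounded tail). Using $v_R(P_i/R)=u(P_i)$ together with the fact that the spatial and temporal separations of $P_1/R$ and $P_2/R$ are of order $R^{-1}$ and $R^{-2s}$ respectively, the estimate yields
\begin{align*}
|u(P_1)-u(P_2)| \;=\; |v_R(P_1/R)-v_R(P_2/R)| \;\leq\; C\, R^{-\alpha}
\end{align*}
for some $\alpha>0$ independent of $R$. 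Letting $R\to\infty$ forces $u(P_1)=u(P_2)$, and since $P_1,P_2$ were arbitrary, $u$ is constant.

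The only step requiring genuine care is to verify that the constant in Theorem \ref{thm:hol2} depends on the solution only through $\|u\|_{L^\infty}$ and a tail term that is uniformly bounded under rescaling; this is a short direct check, as sketched above, and no further structural obstacle arises—the remainder is the classical Liouville rescaling argument made possible by the interior Hölder estimate.
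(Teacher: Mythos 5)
Your argument rests on the same mechanism as the paper's proof — the quantitative, scale-invariant Hölder estimate of Theorem \ref{thm:hol2} — but you repackage it through an explicit parabolic rescaling of the solution, whereas the paper applies \cref{eq:final-estimate-1} and \cref{eq:final-estimate-2} directly: fix $r$, observe that $M=\tfrac12\esup_{Q_{R_0}}|u|+\tl(u;Q_{R_0})$ (resp.\ $\tl^m$) is bounded by a multiple of $\|u\|_{L^\infty}$ uniformly in $R_0$ because $u$ is globally bounded (so the tail is automatically controlled), and send $R\to\infty$ so that $(r/R)^\alpha\to 0$. Your rescaling step is therefore redundant: Theorem \ref{thm:hol2} already encodes the invariance in the $(r/R)^\alpha$ factor, and rescaling $u$ to $v_R$ merely reproduces that computation in new variables.

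There is, however, a genuine gap in the way you set up the rescaled estimate. The claim that \emph{for $R$ sufficiently large the points $P_1/R,P_2/R$ both lie inside a fixed cylinder $Q$ compactly contained in $\mathbb{R}^n\times(-\infty,T/R^{2s})$} is false as stated: as $R\to\infty$ both $P_i/R\to(0,0)$ \emph{and} $T/R^{2s}\to 0$, so the time margin $(T-t_i)/R^{2s}$ from $P_i/R$ to the top of the rescaled domain shrinks to zero, and no $R$-independent cylinder compactly contained in all the domains $\mathbb{R}^n\times(-\infty,T/R^{2s})$ can contain $P_i/R$ for all large $R$. The remedy is to take a backward parabolic cylinder whose top vertex moves with $R$ (sitting just below $t=T/R^{2s}$) while its radius stays fixed — this is admissible because the constant in Theorem \ref{thm:hol2} is universal — but as written the step is incorrect. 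You should also note that the oscillation estimate is over the intrinsic cylinders $Q_r(\theta)$ with $\theta=M^{1-m}$ (resp.\ $\mathfrak{Q}_r(\vartheta)$, $\vartheta=M^{(m-1)/2s}$), so the rescaled cylinders must respect this geometry; this is harmless here because $M$ (hence $\theta$ or $\vartheta$) is bounded above and, whenever $u\not\equiv 0$, also bounded below uniformly in the scale, but it deserves a remark.
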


The paper is organized as follows. In Section \ref{sec:prel}, we provide notation, definitions, and auxiliary lemmas. Section \ref{sec:cacc} is devoted to a number of energy estimates\footnote{ We need to work with more than one energy estimate for the same equation. At least in the fast diffusion case, it is unclear what the corresponding De Giorgi class should be.}. Section \ref{sec:De} is used to prove variants of De Giorgi's lemma in our setup. The equation is analysed near its singularity/degeneracy in its intrinsic geometry in Section \ref{sec:close0}. Finally, in Section \ref{sec:hol}, we prove Theorem \ref{thm:hol} with the estimate in Theorem \ref{thm:hol2} along with \cref{cor:Lio}.

\section{Preliminaries}\label{sec:prel}
Let $c$ be a positive generic constant which depends on the \textbf{data}. We will specify its dependence using brackets, e.g., $c=c(n,s,m,\lambda,\Lambda)$ means that $c$ depends on $n,s,m,\lambda$, and $\Lambda$. We will often refer to constants that depend only on \textbf{ data} as universal constants. For positive numbers or functions $f$ and $g$, we write $f\eqsim g$ when there exists an implicit constant $c\geq 1$ such that $\frac{1}{c}f\leq g\leq cf$. For $U\subset\mathbb{R}^n$ an open set and $T>0$, we write $U_T=U\times(0,T]$.

We mention that if $a,b\in\mathbb{R}$ and $m>0$, then there exists $c=c(m)\geq 1$ such that we have
\begin{align*}
\frac{1}{c}(|a|+|b|)^{m-1}(a-b)\leq\phi(a)-\phi(b)\leq c(|a|+|b|)^{m-1}(a-b)
\end{align*}
from the relation $(\phi(a)-\phi(b))(a-b)\eqsim (|a|+|b|)^{m-1}(a-b)^2$ in \cite[Appendix B]{DieForTomwan20}.

Throughout the paper, we always assume $n\in\mathbb{N}$ with $n\geq 2$. For $\rho>0$, $x=(x_1,\dots,x_n)\in\mathbb{R}^n$, and $x_0=(x_{0,1},\dots,x_{0,n})\in\mathbb{R}^n$, we denote
\begin{align*}
K_{\rho}(x_0)=\{x\in\mathbb{R}^n:|x_i-x_{0,i}|<\rho\quad\text{for all }i=1,2,\dots,n\},
\end{align*}
i.e., $K_{\rho}(x_0)$ is the cube centred at $x_0$ and has radius $\rho$.	We write its boundary as $\de K_{\rho}(x_0)$. For $t_0\in\mathbb{R}$ and $s\in(0,1)$, we write the open time interval as follows:
\begin{align*}
I_{\rho}(t_0)=I^{s}_{\rho}(t_0)=(t_0-\rho^{2s},t_0).
\end{align*}
We will simply write the cube as $K$ and the time interval as $I$ if their centres and the radii are clear in the context. 
Now with denoting $z_0=(t_0,x_0)\in\mathbb{R}^{n+1}$, with $\rho,\tau>0$ we define time-spatial cube
\begin{align*}
Q_{\rho,\tau}(z_0)=I_{\tau}(t_0)\times K_{\rho}(x_0)\quad\text{and}\quad Q_{\rho}(z_0)=Q_{\rho,\rho}(z_0).
\end{align*}
We usually write $Q_{\rho,\tau}=Q_{\rho,\tau}(0)$ and $Q_{\rho,\tau}(x_0,t_0)=(x_0,t_0)+Q_{\rho,\tau}$.
The parabolic boundary $\partial_PQ_{\rho,\tau}(z_0)$ is defined as 
\begin{align*}
\partial_PQ_{\rho,\tau}(z_0)=(\{t_0-\tau^{2s}\}\times K_{\rho}(x_0))\cup ((t_0-\tau^{2s},t_0)\times\partial K_{\rho}(x_0)).
\end{align*}

With $U\subset\mathbb{R}^n$ an open set and $T>0$, we define the function space
\begin{align*}
C(0,T; L^2_{\text{loc}}(U))=\left\{f\in L^1(U_T)\,\,:\,\,\text{for any compact }K\Subset U,\,\,t\mapsto\|f(\cdot,t)\|_{L^2(K)}\,\,\text{is continuous on }(0,T)\right\}.
\end{align*}
With $s\in(0,1)$ and $U\subset\mathbb{R}^n$ an open set, the fractional Sobolov space $W^{s,2}(U)$ is defined as 
\begin{align*}
W^{s,2}(U)=\left\{f\in L^2(U)\,\,:\,\,\|f\|_{L^2(U)}+[f]_{W^{s,2}(U)}<\infty\right\}
\end{align*}
equipped with the norm
\begin{align*}
\|f\|_{W^{s,2}(U)}:=\|f\|_{L^2(U)}+[f]_{W^{s,2}(U)}:=\left(\int_{U}|f|^2\,dx\right)^{\frac{1}{2}}+\left(\int_{U}\int_{U}\dfrac{|f(x)-f(y)|^2}{|x-y|^{n+2s}}\,dx\,dy\right)^{\frac{1}{2}}.
\end{align*}
By $W^{s,2}_0(U)$ we mean functions $ f \in W^{s,2}(\bb{R}^n)$ with $f\equiv 0$ in $\mathbb{R}^n\setminus U$.
Now we define
\begin{align*}
L^2_{\text{loc}}(0,T; W^{s,2}_{\text{loc}}(U))=\left\{f\in L^1(U_T)\,\,:\,\,\text{for any compact }I'\Subset (0,T)\text{ and } K\Subset U,\quad \int_{I'}\|f(\cdot,t)\|^2_{W^{s,2}(K)}\,dt<\infty\right\}.
\end{align*}
	
With $m>0$, $s\in(0,1)$, a time interval $I\subset\mathbb{R}$, and the function space
\begin{align*}
L^{m}_{2s}(\mathbb{R}^n)=\left\{f\in L^{1}_{\text{loc}}(\mathbb{R}^n)\,\,:\,\,\int_{\mathbb{R}^n}\dfrac{|f(y)|^m}{(1+|y|)^{n+2s}}\,dy<\infty\right\},
\end{align*}
we consider the tail space as follows:
\begin{align*}
L^{\infty}(I;L^{m}_{2s}(\mathbb{R}^n))=\left\{f\in L^{1}_{\text{loc}}(\mathbb{R}^{n+1})\,\,:\,\,\esup_{t\in I}\int_{\mathbb{R}^n}\dfrac{|f(y,t)|^m}{(1+|y|)^{n+2s}}\,dy<\infty\right\}.
\end{align*}
For $r>0$, $x_0\in\mathbb{R}^n$ and $f\in L^{\infty}(I;L^{m}_{2s}(\mathbb{R}^n))$, we define the tail of $f$ as follows:
\begin{align*}
\tl(f;K_{r}(x_0)\times I) = \left(r^{2s}\esup_{t\in I} \int_{\bb{R}^n\setminus K_r(x_0)}\frac{|f(y,t)|^m}{|y-x_0|^{n+2s}}\,dy\right)^{\frac{1}{m}}.
\end{align*}

Then our notion of the weak solution to \cref{eq:main-eq} is as follows. 
\begin{defn}\label{def:weak}
Let $m>0$, $s\in(0,1)$, $T>0$, and $\Omega\subset\mathbb{R}^n$ a bounded and open set. A real-valued function $u$ with 
\begin{align}\label{eq:space0}
\phi(u)\in L^2_{\text{loc}}(0,T; W^{s,2}_{\text{loc}}(\Omega)),\quad u\in L^{\infty}(0,T;L^{m}_{2s}(\mathbb{R}^n))
\end{align}
and
\begin{align}\label{eq:space}
\begin{cases}
u\in C(0,T; L^2_{\text{loc}}(\Omega))\cap L^2_{\text{loc}}(0,T; W^{s,2}_{\text{loc}}(\Omega))&\quad\text{when}\quad\,m>1,\\
u\in C(0,T; L^{m+1}_{\text{loc}}(\Omega))&\quad\text{when}\quad 0<m<1
\end{cases}
\end{align}
is a local weak sub(super)-solution to \cref{eq:main-eq} if for every compact set $K\subset \Omega$ and every interval $[t_1,t_2]\subset(0,T]$,
\begin{align*}
\int_{K}u\xi\,dx\Big\lvert^{t_2}_{t_1}-\iint_{K\times(t_1,t_2)}u\xi_t\,dx\,dt+\iiint_{K\times \mathbb{R}^n\times (t_1,t_2)}(\phi(u)(x,t)-\phi(u)(y,t))(\xi(x,t)-\xi(y,t))K(x,y;t)\,dx\,dy\,dt\leq(\geq) 0
\end{align*}
for all nonnegative test functions $\xi\in W^{1,2}_{\text{loc}}(0,T; L^2(K))\cap L^2_{\text{loc}}(0,T; W^{s,2}_0(K))$.
\end{defn}
\begin{rem}
The above statement means that if $u$ is a local weak sub-solution then the inequality `$\leq$' holds, whereas if it is a local weak super-solution in $\Omega_T$, then the inequality `$\geq$' holds in \cref{eq:moleq}. We will use a similar writing style throughout the paper for brevity. 
\end{rem}
\begin{rem}
    We will always assume that the solutions $u$ are locally bounded. The constants will not depend on the local bound of $u$ (but see \cref{rem:no-prob}.) In the porous media case, a standard Moser iteration will show that solutions are always locally bounded, whereas in the fast diffusion case one needs further integrability hypothesis on $u$ to show local boundedness. 
\end{rem}
We frequently use the following Sobolev embedding which are easily proved from modifications of \cite[Eq. (2.4)]{DinZhaZho21} in the proof of \cite[Lemma 2.3]{DinZhaZho21}.
\begin{lem}
Let $n\geq 2$. For $(x_0,t_0)\in\mathbb{R}^{n+1}$, $\rho>0$, and $\tau>0$, let us write $K_{\rho}=K_{\rho}(x_0)$ and $I_{\tau}=I_{\tau}(t_0)$. For $f\in L^2(I_{\tau};W^{s,2}_0(K_{\rho}))\cap L^{\infty}(I_{\tau};L^2(K_{\rho}))$ there holds
\begin{align}\label{eq:SP1}
\int_{I_{\tau}}\int_{K_{\rho}} |f|^{2\frac{n+2s}{n}}\,dx\,dt \leq C\left(\int_{I_{\tau}}\int_{K_{\rho}}\int_{K_{\rho}} \frac{|f(x,t)-f(y,t)|^2}{|x-y|^{n+2s}}\,dx\,dy\,dt \right) \left(\esup_{t\in I_{\tau}}\int_{K_{\rho}} |f(x,t)|^2\,dx\right)^{\frac{2s}{n}}.
\end{align}
Also, for $f\in L^2(I_{\tau};W^{s,2}_0(K_{\rho}))\cap L^{\infty}(I_{\tau};L^1(K_{\rho}))$ there holds
\begin{align}\label{eq:SP2}
\int_{I_{\tau}}\int_{K_{\rho}} |f|^{2\frac{n+s}{n}}\,dx\,dt \leq C\left(\int_{I_{\tau}}\int_{K_{\rho}}\int_{K_{\rho}} \frac{|f(x,t)-f(y,t)|^2}{|x-y|^{n+2s}}\,dx\,dy\,dt \right) \left(\esup_{t\in I_{\tau}}\int_{K_{\rho}} |f(x,t)|\,dx\right)^{\frac{2s}{n}}.
\end{align}
The constants $C$ in the above estimates only depend on $n$ and $s$.
\end{lem}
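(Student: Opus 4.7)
The plan is to obtain both inequalities from a pointwise-in-time fractional Sobolev embedding in space, followed by a Hölder interpolation, and then integrate in time against the given $L^{\infty}_t$ bound. The key preliminary is that since $f(\cdot,t)\in W^{s,2}_0(K_{\rho})$ extends by zero to a function in $W^{s,2}(\mathbb{R}^n)$ and $n\geq 2 > 2s$, the fractional Sobolev embedding yields for a.e.\ $t\in I_{\tau}$
\begin{align*}
\left(\int_{K_{\rho}}|f(x,t)|^{\frac{2n}{n-2s}}\,dx\right)^{\frac{n-2s}{n}} \leq C(n,s)\int_{K_{\rho}}\int_{K_{\rho}}\frac{|f(x,t)-f(y,t)|^2}{|x-y|^{n+2s}}\,dx\,dy,
\end{align*}
where the tail contribution from $K_{\rho}$ to $\mathbb{R}^n\setminus K_{\rho}$ in the Gagliardo seminorm on $\mathbb{R}^n$ is absorbed via a fractional Hardy-type inequality for zero-extended functions; a dilation argument shows that the resulting constant is scale invariant and depends only on $n,s$.

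For \cref{eq:SP1}, fix $q=2(n+2s)/n$ and split $|f|^q = |f|^2\cdot |f|^{4s/n}$. Applying Hölder in space with conjugate exponents $n/(n-2s)$ and $n/(2s)$ gives
\begin{align*}
\int_{K_{\rho}}|f(x,t)|^q\,dx \leq \left(\int_{K_{\rho}}|f(x,t)|^{\frac{2n}{n-2s}}\,dx\right)^{\frac{n-2s}{n}}\left(\int_{K_{\rho}}|f(x,t)|^2\,dx\right)^{\frac{2s}{n}}.
\end{align*}
Using the fractional Sobolev embedding on the first factor, bounding the second by $\bigl(\esup_{t\in I_{\tau}}\int_{K_{\rho}}|f(\cdot,t)|^2\,dx\bigr)^{2s/n}$, and integrating in $t$ over $I_{\tau}$ produces \cref{eq:SP1}.

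For \cref{eq:SP2}, set $q=2(n+s)/n$ and split $|f|^q = |f|^2\cdot |f|^{2s/n}$. The same pair of conjugate exponents $n/(n-2s)$ and $n/(2s)$ now yields
\begin{align*}
\int_{K_{\rho}}|f(x,t)|^q\,dx \leq \left(\int_{K_{\rho}}|f(x,t)|^{\frac{2n}{n-2s}}\,dx\right)^{\frac{n-2s}{n}}\left(\int_{K_{\rho}}|f(x,t)|\,dx\right)^{\frac{2s}{n}};
\end{align*}
combining with the fractional Sobolev embedding and taking the $\esup_t$ in the $L^1_x$ factor before integrating in $t$ gives \cref{eq:SP2}.

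The only real point to verify is the auxiliary spatial embedding for $W^{s,2}_0(K_{\rho})$-functions with a constant depending only on $n,s$; this is classical and can be argued either by the fractional Hardy inequality alluded to above, or by viewing the cube as a fractional Sobolev extension domain, with scale invariance following from a standard rescaling $x \mapsto x/\rho$ that confirms both sides of the inequalities scale like $\rho^n$.
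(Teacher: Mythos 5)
Your proof is correct and follows essentially the same route as the paper's: a spatial Hölder interpolation between $L^2$ (resp.\ $L^1$) and $L^{2n/(n-2s)}$ combined with the fractional Sobolev embedding for $W^{s,2}_0(K_\rho)$-functions, followed by pulling out the $\esup$ in time; the paper normalizes to $\rho=1$ first and cites \cite[Theorem 6.5]{DiPalVal12} for the Sobolev step, whereas you keep general $\rho$, note the $\rho^n$-homogeneity of both sides directly, and flag the tail/Hardy absorption that the reference handles implicitly — a cosmetic reordering rather than a genuinely different argument.
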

\begin{proof}
We prove the inequalities for $\rho=1$, since when $\rho\neq 1$, they follow from scaling arguments. By H\"{o}lder inequality together with $n\geq 2$,
\begin{align*}
\int_{I_{\tau}}\int_{K_1}|f(x,t)|^{2\frac{n+2s}{n}}\,dx\,dt&=\int_{I_{\tau}}\int_{K_1}|f(x,t)|^{\frac{4s}{n}}|f(x,t)|^{2}\,dx\,dt\\
&\leq \int_{I_{\tau}}\left(\int_{K_1}|f(x,t)|^2\,dx\right)^{\frac{2s}{n}}\left(\int_{K_1}|f(x,t)|^{\frac{2n}{n-2s}}\,dx\right)^{\frac{n-2s}{n}}\,dt\\
&\leq \left(\esup_{t\in I_{\tau}}\int_{B_1}|f(x,t)|^2\,dx\right)^{\frac{2s}{n}}\int_{I_{\tau}}\left(\int_{K_1}|f(x,t)|^{\frac{2n}{n-2s}}\,dx\right)^{\frac{n-2s}{n}}\,dt
\end{align*}
holds. Now by applying fractional Sobolev-Poincar\'{e} inequality \cite[Theorem 6.5]{DiPalVal12}, we obtain \cref{eq:SP1}. For \cref{eq:SP2}, we estimate
\begin{align*}
\int_{I_{\tau}}\int_{K_1}|f(x,t)|^{2\frac{n+s}{n}}\,dx\,dt&=\int_{I_{\tau}}\int_{K_1}|f(x,t)|^{\frac{2s}{n}}|f(x,t)|^{2}\,dx\,dt\\
&\leq \left(\esup_{t\in I_{\tau}}\int_{B_1}|f(x,t)|\,dx\right)^{\frac{2s}{n}}\int_{I_{\tau}}\left(\int_{K_1}|f(x,t)|^{\frac{2n}{n-2s}}\,dx\right)^{\frac{n-2s}{n}}\,dt.
\end{align*}
Then again applying fractional Sobolev-Poincar\'{e} inequality yields \cref{eq:SP2}.
\end{proof}

We also provide the following type of isoperimetric inequality which is a variant of \cite[Lemma 3.3]{apt}
\begin{lem}\label{lem:iso}
Let $k,l,q\in\mathbb{R}$ be such that $k<l<q$. Then with $C=C(n,s)$,
\begin{align*}
(q-l)(\phi(l)-\phi(k))|\{u>q\}\cap K_{\rho}||\{u<k\}\cap K_{\rho}| \leq C\rho^{n+2s}\int_{K_{\rho}}(u-l)_{+}(x)\int_{K_{2\rho}} \frac{(\phi(u)-\phi(l))_{-}(y)}{|x-y|^{n+2s}}\,dy\,dx.
\end{align*}
\end{lem}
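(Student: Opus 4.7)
The plan is to prove this by pointwise minoration of both factors of the integrand on the right-hand side, restricting integration to suitable subsets of $K_\rho$, and exploiting the boundedness of the cube to replace the singular kernel by the constant $(C\rho)^{-n-2s}$.

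More precisely, I would first observe that $\phi(X) = |X|^{m-1}X$ is strictly increasing for $m>0$, so $\phi(l) > \phi(k)$. For any $x \in \{u>q\}\cap K_\rho$ we have $u(x) > q > l$, hence $(u-l)_+(x) \geq q - l$. Similarly, for any $y \in \{u<k\}\cap K_\rho$ we have $\phi(u(y)) < \phi(k) < \phi(l)$, hence $(\phi(u)-\phi(l))_-(y) \geq \phi(l) - \phi(k)$. Since $\{u<k\}\cap K_\rho \subset K_{2\rho}$, I can restrict the inner integral to this subset; since $\{u>q\}\cap K_\rho \subset K_\rho$, the outer integral can likewise be restricted. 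This yields the lower bound
\begin{align*}
\int_{K_{\rho}}(u-l)_{+}(x)\int_{K_{2\rho}} \frac{(\phi(u)-\phi(l))_{-}(y)}{|x-y|^{n+2s}}\,dy\,dx \geq (q-l)(\phi(l)-\phi(k))\iint_{A\times B}\frac{dy\,dx}{|x-y|^{n+2s}},
\end{align*}
where $A = \{u>q\}\cap K_\rho$ and $B = \{u<k\}\cap K_\rho$.

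Next, since both $x$ and $y$ lie in $K_\rho$, we have $|x-y| \leq 2\sqrt{n}\,\rho$, so $|x-y|^{-n-2s} \geq c(n,s)\rho^{-n-2s}$. Substituting this lower bound into the iterated integral yields
\begin{align*}
\iint_{A\times B}\frac{dy\,dx}{|x-y|^{n+2s}} \geq \frac{c(n,s)}{\rho^{n+2s}}|A||B|,
\end{align*}
which after rearrangement is exactly the claimed inequality with $C = 1/c(n,s)$. No obstacle arises; the only point that must be handled is the monotonicity of $\phi$, which follows immediately from its definition. The statement is essentially a discretized version of the identity $(q-l)(\phi(l)-\phi(k)) = \int_l^q dt \int_k^l \phi'(s)\,ds$, transferred from a level-set estimate to an integral estimate via the coarea viewpoint.
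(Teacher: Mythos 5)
Your argument is correct and takes essentially the same route as the paper's proof: both restrict the double integral to the sub/super-level sets $\{u>q\}\cap K_\rho$ and $\{u<k\}\cap K_\rho$, minorize $(u-l)_+$ by $q-l$ and $(\phi(u)-\phi(l))_-$ by $\phi(l)-\phi(k)$ using the monotonicity of $\phi$, and bound the kernel from below by $c(n,s)\rho^{-n-2s}$ via the diameter of the cube. The paper happens to perform the kernel minoration before restricting the domain of integration, whereas you restrict first, but the steps are interchangeable and yield the identical estimate.
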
 
\begin{proof}
We estimate
\begin{align*}
\int_{K_{\rho}}(u-l)_{+}(x)\int_{K_{2\rho}} \frac{(\phi(u)-\phi(l))_{-}(y)}{|x-y|^{n+2s}}\,dy\,dx&\geq \frac{C}{\rho^{n+2s}}\int_{K_{\rho}}(u-l)_{+}(x)\int_{K_{2\rho}}(\phi(u)-\phi(l))_{-}(y)\,dy\,dx\\
&\geq \frac{C}{\rho^{n+2s}}\int_{K_{\rho}\cap\{u>q\}}(u-l)_{+}(x)\int_{K_{\rho}\cap\{u<k\}}(\phi(u)-\phi(l))_{-}(y)\,dy\,dx\\
&\geq \frac{C}{\rho^{n+2s}}\int_{K_{\rho}\cap\{u>q\}}(q-l)\int_{K_{\rho}\cap\{u<k\}}(\phi(l)-\phi(k))\,dy\,dx.
\end{align*}
Then the conclusion follows.
\end{proof}

We end this section with the following technical lemma from \cite[Chapter I, Lemma 4.1]{DiB83}.
\begin{lem}\label{lem:tech}
For $C,b>1$ and $\alpha>0$, let $\{Y_i\}$, $i=0,1,2,\dots,$ be a sequence of positive numbers, satisfying
\begin{align*}
Y_{i+1}\leq C b^i Y_{i}^{1+\alpha}.
\end{align*}
Then the following implication holds:
\begin{align*}
Y_0\leq C^{-\frac{1}{\alpha}}b^{-\frac{1}{\alpha^2}}\quad\implies\quad \lim_{i\rightarrow\infty}Y_i=0.
\end{align*}
\end{lem}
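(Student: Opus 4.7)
The plan is to prove the decay by establishing a sharper, explicit super-solution to the recurrence by induction. Specifically, I would guess that the sequence decays geometrically with common ratio $b^{-1/\alpha}$, i.e., I would aim to show
\begin{equation*}
Y_i \leq Y_0\, b^{-i/\alpha} \quad \text{for all } i\geq 0.
\end{equation*}
The exponent $1/\alpha$ is the natural scale here because the recurrence loses a factor of $b^i$ from iteration but gains a power $1+\alpha$ on $Y_i$, so to make the bound self-propagating one needs the ratio of successive bounds to compensate exactly this asymmetry; a short dimensional-analysis/fixed-point calculation points to $b^{-1/\alpha}$.

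The base case $i=0$ is trivial. For the inductive step, assuming $Y_i \leq Y_0\, b^{-i/\alpha}$, I would substitute into the recurrence:
\begin{equation*}
Y_{i+1}\leq C b^i Y_i^{1+\alpha} \leq C b^i Y_0^{1+\alpha} b^{-i(1+\alpha)/\alpha} = C\, Y_0^{\alpha} \cdot Y_0\, b^{-i/\alpha}.
\end{equation*}
To close the induction as $Y_{i+1}\leq Y_0\, b^{-(i+1)/\alpha}$, it suffices to demand $C\,Y_0^{\alpha} \leq b^{-1/\alpha}$, which is exactly the hypothesis $Y_0 \leq C^{-1/\alpha} b^{-1/\alpha^2}$. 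Thus the inductive bound propagates, and since $b>1$ the factor $b^{-i/\alpha}\to 0$ as $i\to\infty$, giving $Y_i\to 0$.

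There is not really a hard step here; the only small subtlety is identifying the correct geometric rate $b^{-1/\alpha}$ so that the smallness condition on $Y_0$ matches what the induction demands. Once that ansatz is chosen, the verification is a direct one-line manipulation with exponents, and the proof is complete.
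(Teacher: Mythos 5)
Your proof is correct, and it is exactly the standard induction argument for this fast geometric convergence lemma; the paper itself does not reproduce a proof but simply cites \cite[Chapter I, Lemma 4.1]{DiB83}, and your argument matches the one given there.
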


\section{Caccioppoli Estimates}\label{sec:cacc}
In this section, we provide several type of Caccioppoli estimates necessary to prove our main result. We define for any $h>0$, $0<t<T$ and $v\in L^{1}(U_T)$,
\begin{align*}
\llbracket v\rrbracket_{h}(x,t):=\frac{1}{h}\int^t_0 e^{\frac{a-t}{h}}v(x,a)\,da\quad\text{and}\quad \llbracket v\rrbracket_{\overbar{h}}(x,t):=\frac{1}{h}\int^T_t e^{\frac{t-a}{h}}v(x,a)\,da.
\end{align*}
From \cite{Lia21}, we have the following:
\begin{align}\label{eq:identity}
\partial_t\llbracket v\rrbracket_h=\frac{1}{h}\left(v-\llbracket v\rrbracket_h\right)\quad\text{and}\quad\partial_t\llbracket v\rrbracket_{\overbar{h}}=\frac{1}{h}\left(\llbracket v\rrbracket_{\overbar{h}}-v\right).
\end{align}

Now we show the following lemma for the flexibility of choice for test functions.
\begin{lem}\label{lem:sub}
Let $m > 0$ and $u$ be a local weak sub(super)-solution in $\Omega_T$ in the sense of Definition \ref{def:weak}. Then 
for any nonnegative test function $\varphi\in L^{2}(0,T;W^{s,2}_0(\Omega))$ with compact support in $\Omega_T$, we have
\begin{align}\label{eq:moleq}
\begin{split}
&\iint_{\Omega_T}\varphi\partial_t\llbracket u\rrbracket_h\,dx\,dt+\iint_{\Omega_T}\int_{\mathbb{R}^n}\llbracket(\phi(u)(y,t)-\phi(u)(x,t))K(x,y;t)\rrbracket_h(\varphi(y,t)-\varphi(x,t))\,dy\,dx\,dt\\
&\quad \leq(\geq)\int_{\Omega}u(x,0)\cdot\frac{1}{h}\int_{0}^{T} e^{-\frac{a}{h}}\varphi(x,a)\,da\,dx.
\end{split}
\end{align}
\end{lem}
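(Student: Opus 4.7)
The plan is to test \cref{def:weak} with the exponentially averaged function $\xi(x,t) := \llbracket\varphi\rrbracket_{\overbar{h}}(x,t)$ and then move the $\overbar{h}$-mollification off $\xi$ and onto the remaining factors via the Fubini-type adjoint identity
\begin{align*}
\iint_{\Omega_T} f\,\llbracket g\rrbracket_{\overbar{h}}\,dx\,dt = \iint_{\Omega_T} \llbracket f\rrbracket_{h}\,g\,dx\,dt,
\end{align*}
which follows by a direct change in the order of integration. Since $\varphi \in L^{2}(0,T;W^{s,2}_0(\Omega))$ is supported in some $K\times[\delta,T]$ with $K\Subset\Omega$ and $\delta > 0$, the identity \cref{eq:identity} gives $\partial_t\xi = h^{-1}(\xi-\varphi)\in L^2(0,T;L^2(K))$, so $\xi\in W^{1,2}(0,T;L^2(K))\cap L^2(0,T;W^{s,2}_0(K))$ is admissible in \cref{def:weak}; moreover $\xi(\cdot,T)\equiv 0$, while $\xi(\cdot,0) = h^{-1}\int_0^T e^{-a/h}\varphi(\cdot,a)\,da$ is precisely the factor multiplying $u(x,0)$ on the right-hand side of \cref{eq:moleq}.

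Inserting $\xi$ into the weak inequality on $[\epsilon, T]$ and letting $\epsilon\to 0^+$, the boundary term at $T$ vanishes, while the one at $\epsilon$ converges to $-\int_\Omega u(x,0)\xi(x,0)\,dx$, producing the right-hand side of \cref{eq:moleq}. Using $\partial_t\xi = h^{-1}(\xi-\varphi)$ together with the adjoint identity above, the parabolic term becomes
\begin{align*}
-\iint_{\Omega_T} u\,\partial_t\xi\,dx\,dt = \frac{1}{h}\iint_{\Omega_T}(u-\llbracket u\rrbracket_h)\varphi\,dx\,dt = \iint_{\Omega_T}\varphi\,\partial_t\llbracket u\rrbracket_h\,dx\,dt,
\end{align*}
where the last equality is \cref{eq:identity} applied to $\llbracket u\rrbracket_h$. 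For the nonlocal term, the same adjoint identity, applied in the time integration with $F(t) := (\phi(u)(x,t)-\phi(u)(y,t))K(x,y;t)$ and $G(t) := \varphi(x,t)-\varphi(y,t)$, converts $\llbracket \cdot \rrbracket_{\overbar h}$ acting on $G$ into $\llbracket \cdot \rrbracket_h$ acting on $F$, producing the form stated in \cref{eq:moleq}.

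The main obstacle will be justifying the limit $\epsilon\to 0^+$ of the initial boundary term and interpreting $u(x,0)$: the continuity class in \cref{eq:space} is phrased on the open interval $(0,T)$, so $u(x,0)$ must be read as $\lim_{t\to 0^+} u(\cdot,t)$ in $L^2_{\mathrm{loc}}$ (when $m > 1$) or $L^{m+1}_{\mathrm{loc}}$ (when $0 < m < 1$). Under the standing local boundedness hypothesis on $u$, one has $\xi(\cdot,\epsilon)\to\xi(\cdot,0)$ in $L^2(K)$ via the continuity inherited from $\xi\in W^{1,2}(0,T;L^2(K))$, and the uniform $L^2(K)$-bound on $\xi(\cdot,\epsilon)$ coming from the exponential averaging of $\varphi$ closes the pairing by dominated convergence. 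Apart from this step, the argument reduces to a routine Fubini rearrangement together with \cref{eq:identity}.
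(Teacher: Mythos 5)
Your proof is correct and follows the same strategy as the paper: test the weak formulation with $\llbracket\varphi\rrbracket_{\overbar{h}}$, use the adjoint (Fubini) identity to move the mollification from the test function onto $u$ and the kernel, and apply \cref{eq:identity} to identify the time-derivative term. The only cosmetic difference is that you derive the transfer identity $\iint u\,\partial_t\llbracket\varphi\rrbracket_{\overbar{h}}\,dx\,dt = -\iint\varphi\,\partial_t\llbracket u\rrbracket_h\,dx\,dt$ explicitly, whereas the paper cites \cite{Lia21} for it, and your concern about interpreting $u(x,0)$ is ultimately moot because in every application of this lemma the test function $\varphi$ is compactly supported away from $t=0$ (via the cutoff $\psi_\epsilon$), so the right-hand side vanishes after sending $h\to 0$.
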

\begin{proof}
If we take the test function $\llbracket\varphi\rrbracket_{\overbar{h}}$ to \cref{eq:main-eq}, then since $\varphi$ has compact support in $\Omega_T$, by Fubini's theorem we have
\begin{align}\label{eq:moleq0}
\begin{split}
&-\iint_{\Omega_T}u\partial_t\llbracket \varphi\rrbracket_{\overbar{h}}\,dx\,dt+\iint_{\Omega_T}\int_{\mathbb{R}^n}\llbracket(\phi(u)(y,t)-\phi(u)(x,t))K(x,y;t)\rrbracket_h(\varphi(y,t)-\varphi(x,t))\,dt\,dx\,dy\\
&\quad \leq(\geq)\int_{\Omega}u(x,0)\cdot\frac{1}{h}\int^{T}_{0} e^{-\frac{a}{h}}\varphi(x,a)\,da\,dx,
\end{split}
\end{align}
where in the above inequality `$\leq$' holds for sub-solution, whereas `$\geq$' holds for super-solution. Then by the computation in \cite[Page 6]{Lia21}, we obtain
\begin{align*}
\iint_{\Omega_T}u\partial_t\llbracket\varphi\rrbracket_{\overbar{h}}\,dx\,dt=-\iint_{\Omega_T}\partial_t\llbracket u\rrbracket_h\varphi\,dx\,dt.
\end{align*}
This completes the proof.
\end{proof}

For $f\in L^{1}_{\text{loc}}(I\times U)$ with an open set $U\subset\mathbb{R}^n$, an open interval $I\subset\mathbb{R}$, and $k\in\mathbb{R}$, we define the truncation operator
\begin{align*}
(f-k)_+:=\max\{f-k,0\}\quad\text{and}\quad (f-k)_-:=\max\{k-f,0\}.
\end{align*}
For $P, Q\in\mathbb{R}$, denoting
\begin{align*}
\g_{\pm}(P,Q) = \pm \int_{Q}^P\left(\phi(R)-\phi(Q)\right)_{\pm}\,dR,
\end{align*}
we prove the following Caccioppoli inequality.
\begin{lem}\label{lem:ee-sing-1}
Let $m > 0$ and $u$ be a local weak sub(super)-solution in $\Omega_T$ in the sense of Definition \ref{def:weak}. Let $R,S>0$. Then there is a constant $C=C(\text{\textbf{data}})>0$ such that for all cylinders $Q_{R,S}(x_0,t_0) := K_R(x_0)\times(t_0-S,t_0]=K\times I \Subset \Omega_T$, for every level $k \in \bb{R}$ and for every smooth nonnegative cutoff function $\xi$ in $Q_{R,S}(x_0,t_0)$ vanishing on $\de K_R(x_0)$ we have 
\begin{align*}
&\esup_{t\in I}\int_K\xi^2\g_{\pm}(u,k)(x,t)\,dx+\int_I\int_K\int_K \min\{\xi^2(x,t),\xi^2(y,t)\}\frac{|(\phi(u)-\phi(k))_{\pm}(x,t)-(\phi(u)-\phi(k))_{\pm}(y,t)|^2}{|x-y|^{n+2s}}\,dx\,dy\,dt \\
&+\int_I\left(\int_K\xi^2(x,t)(\phi(u)-\phi(k))_{\pm}(x,t)\,dx\right)\left(\int_K \frac{(\phi(u)-\phi(k))_{\mp}(y,t)}{|x-y|^{n+2s}}\,dy\right)\,dt\\
&\leq C\int_I\int_K\int_K\max\{(\phi(u)-\phi(k))_{\pm}^2(x,t),(\phi(u)-\phi(k))_{\pm}^2(y,t)\}\frac{|\xi(x,t)-\xi(y,t)|^2}{|x-y|^{n+2s}}\,dx\,dy\,dt + C\int_I\int_K\g_{\pm}(u,k)|\de_t \xi^2|\,dx\,dt\\
&+C\int_I\left(\int_K\xi^2(\phi(u)-\phi(k))_{\pm}(x,t)\,dx\right)\left(\underset{x\in \mbox{supp}(\xi(\cdot,t)); t\in I}{\esup}\int_{\bb{R}^n\setminus K}\frac{(\phi(u)-\phi(k))_{\pm}(y,t)}{|x-y|^{n+2s}}\,dy\right)\,dt+\int_K(\g_{\pm}(u,k)\xi^2)(x,t_0-S)\,dx.
\end{align*}
\end{lem}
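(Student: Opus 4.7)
My plan is to test the mollified inequality \cref{eq:moleq} from Lemma \ref{lem:sub} with the Steklov-regularized function
\[
\varphi(x,t) \;=\; \xi^2(x,t)\,\big(\phi(\llbracket u\rrbracket_h)(x,t)-\phi(k)\big)_{\pm},
\]
which is admissible in $L^2(0,T;W^{s,2}_0(K))$ for small $h$ since $\phi(u)\in L^2_{\mathrm{loc}}(0,T;W^{s,2}_{\mathrm{loc}}(\Omega))$ and $\xi$ has compact support, and then to send $h\to 0$ at the end. This choice produces, on the time-derivative side, the primitive $\g_{\pm}(u,k)$, and on the jump side both the fractional Sobolev seminorm of the truncation and, through an algebraic identity exploiting the positive/negative part decomposition, the "isoperimetric" good term appearing in the statement.

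\textbf{Time-derivative part.} Because $\llbracket u\rrbracket_h$ is absolutely continuous in $t$ and $R\mapsto\g_{\pm}(R,k)$ has derivative $\pm(\phi(R)-\phi(k))_{\pm}$ (thanks to the monotonicity of $\phi$ and the definition of $\g_{\pm}$), the exact chain rule
\[
\pm\big(\phi(\llbracket u\rrbracket_h)-\phi(k)\big)_{\pm}\,\partial_t\llbracket u\rrbracket_h \;=\; \partial_t\g_{\pm}(\llbracket u\rrbracket_h,k)
\]
holds pointwise. Integrating by parts in $t$ transfers this derivative onto $\xi^2$, producing the $|\partial_t\xi^2|$ term and the initial contribution $\int_K\g_{\pm}(\llbracket u\rrbracket_h,k)\xi^2|_{t=t_0-S}\,dx$. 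Taking $\esup$ over the upper time endpoint via a sharp cut-off and passing to the limit $h\to 0$ using \cref{eq:space} (whose dichotomy between $m>1$ and $0<m<1$ is precisely tailored for this step) recovers the $\esup_{t\in I}\int_K\xi^2\g_{\pm}(u,k)\,dx$ term on the left-hand side.

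\textbf{Nonlocal part and isoperimetric identity.} After passing to the limit, the nonlocal contribution splits according to $\bb{R}^n=K\cup(\bb{R}^n\setminus K)$. Abbreviating $A:=\phi(u)(x,t)-\phi(k)$ and $B:=\phi(u)(y,t)-\phi(k)$ and focusing on the $+$ case, the key pointwise identity, obtained from $A=A_+-A_-$, $B=B_+-B_-$ together with the cancellations $A_+A_-=B_+B_-=0$, is
\[
(A-B)\big(A_+\xi^2(x)-B_+\xi^2(y)\big) = (A_+-B_+)\big(A_+\xi^2(x)-B_+\xi^2(y)\big) + A_+ B_-\xi^2(x) + A_- B_+\xi^2(y).
\]
On $K\times K$, symmetrization in $(x,y)$ together with a Young's inequality applied to the first summand yields $\geq c\min\{\xi^2(x),\xi^2(y)\}(A_+-B_+)^2 - C\max\{A_+^2,B_+^2\}(\xi(x)-\xi(y))^2$, which, upon integration against $K(x,y;t)\eqsim|x-y|^{-n-2s}$, accounts for the fractional seminorm term on the left and the cut-off error term on the right. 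The last two summands are manifestly nonnegative and, after symmetrization, produce exactly the isoperimetric good term on the left-hand side. On $K\times(\bb{R}^n\setminus K)$ the test function $\varphi(y,t)$ vanishes, so the integrand reduces to $(A-B)A_+\xi^2(x)K(x,y;t)$; writing $B=B_+-B_-$, the nonnegative piece $A_+B_-\xi^2(x)K(x,y;t)$ may be dropped (or kept on the left), and the remaining piece $-A_+B_+\xi^2(x)K(x,y;t)$ is moved to the right and bounded by the stated tail term using $K\leq\Lambda|x-y|^{-n-2s}$.

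\textbf{Main obstacle.} The principal technical difficulty is the rigorous Steklov-average argument for the time-derivative term: because $\phi(R)=|R|^{m-1}R$ degenerates ($m>1$) or is singular ($0<m<1$) at $R=0$, both the convergence $\llbracket u\rrbracket_h\to u$ and the convergence $\g_{\pm}(\llbracket u\rrbracket_h,k)\to\g_{\pm}(u,k)$ must be invoked in the topology matching the time-regularity class stipulated by the dichotomy in \cref{eq:space}. Once this step is handled, the pointwise algebraic identity, the $(x,y)$-symmetrization, and the tail decomposition are elementary.
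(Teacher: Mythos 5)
Your proposal correctly identifies the structure of the argument and the algebra of the nonlocal term (the decomposition $(A-B)(A_+\xi^2(x)-B_+\xi^2(y))=(A_+-B_+)(A_+\xi^2(x)-B_+\xi^2(y))+A_+B_-\xi^2(x)+A_-B_+\xi^2(y)$ is correct and exactly reproduces the Cozzi-type inequality, the isoperimetric good term, and the tail decomposition used in the paper). However, there is a genuine gap in the choice and justification of the test function.

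You propose $\varphi=\xi^2(\phi(\llbracket u\rrbracket_h)-\phi(k))_{\pm}$ and assert admissibility in $L^2(0,T;W^{s,2}_0(K))$ ``since $\phi(u)\in L^2_{\mathrm{loc}}(0,T;W^{s,2}_{\mathrm{loc}}(\Omega))$.'' This inference fails: the time mollification does not commute with the nonlinearity, $\phi(\llbracket u\rrbracket_h)\neq\llbracket\phi(u)\rrbracket_h$, so $\phi(u)\in W^{s,2}$ says nothing directly about $\phi(\llbracket u\rrbracket_h)$. In the fast diffusion regime $0<m<1$ the function class \cref{eq:space} only gives $u\in C(0,T;L^{m+1}_{\mathrm{loc}})$ with no fractional Sobolev regularity on $u$ itself; hence $\llbracket u\rrbracket_h$ has no spatial $W^{s,2}$ regularity, and composing with the non-Lipschitz map $\phi$ (for $m<1$, $\phi$ is only H\"older near $0$) gives no control at all. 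For $m\geq 1$ the choice can be rescued since $u\in L^2_{\mathrm{loc}}(W^{s,2}_{\mathrm{loc}})$ and $\phi$ is locally Lipschitz, but the lemma is stated for all $m>0$, so the argument is incomplete as written.

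This admissibility problem is precisely why the paper tests with $\varphi_\epsilon=\xi^2\psi_\epsilon(\phi(u)-\phi(k))_{\pm}$, which does not depend on $h$ and lies in $L^2(W^{s,2}_0)$ directly by $\cref{eq:space0}$ and the $W^{s,2}$-stability of truncation. The price is that the exact chain rule $\varphi\,\partial_t\llbracket u\rrbracket_h=\partial_t\g_{\pm}(\llbracket u\rrbracket_h,k)$ is replaced by the one-sided convexity inequality $(\phi(u)-\phi(k))_{\pm}\,\partial_t\llbracket u\rrbracket_h\geq\partial_t\g_{\pm}(\llbracket u\rrbracket_h,k)$ (with the opposite sign for supersolutions), which is exactly what is needed for the energy estimate and is the content of the [Lia21] citation. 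A secondary but related issue: with your $h$-dependent test function, after the Fubini step that shifts the exponential mollification from the kernel onto the test function, the nonlocal term carries a genuine double limit (the mollification of the kernel and the $h$-dependence of $\varphi$ simultaneously), whereas in the paper's version the test function is fixed in $h$ and dominated convergence applies cleanly using \cref{eq:space0}. If you keep the rest of your argument but switch to the paper's $h$-independent test function and the convexity inequality in place of the exact chain rule, the proposal becomes a correct rendering of the paper's proof.
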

\begin{proof}
We deal with subsolutions since the case for supersolutions is analogous. For fixed $0<t_0-S<t_1<t_2<t_0<T$ and $\epsilon>0$ small enough we define the cutoff function in time by
\begin{align}\label{eq:psi}
\psi_{\epsilon}(t):=
\begin{cases}
0\quad&\text{for }0\leq t\leq t_1-\epsilon\quad\text{or}\quad t_2+\epsilon<t\leq T\\
1+\frac{t-t_1}{\epsilon}\quad&\text{for }t_1-\epsilon<t\leq t_1,\\
1\quad&\text{for }t_1<t\leq t_2,\\
1-\frac{t-t_2}{\epsilon}\quad&\text{for }t_2<t\leq t_2+\epsilon.
\end{cases}
\end{align}
Now, we choose in \cref{eq:moleq} the test function
\begin{align}
\varphi_{\epsilon}(x,t)=\xi^2(x,t)\psi_{\epsilon}(t)\left(\phi(u(x,t))-\phi(k)\right)_+\in L^{2}(0,T; W^{s,2}_0(\Omega)).
\end{align}
Then we have
\begin{align}\label{eq:moleq1}
\begin{split}
&\iint_{\Omega_T}(\partial_t\llbracket u\rrbracket_h)\varphi\,dx\,dt+\iint_{\Omega_T}\int_{\mathbb{R}^n}\llbracket(\phi(u)(y,t)-\phi(u)(x,t))K(x,y;t)\rrbracket_h(\varphi(y,t)-\varphi(x,t))\,dy\,dx\,dt\\
&\quad \leq\int_{\Omega}u(x,0)\cdot\frac{1}{h}\int^T_0 e^{-\frac{a}{h}}\varphi(x,a)\,da\,dx.
\end{split}
\end{align}		
		
For the integral in \cref{eq:moleq} containing the time derivative by \cite{Lia21} we have
\begin{align}\label{eq:cacc1}
\begin{split}
&\lim_{\epsilon\rightarrow 0}\liminf_{h\rightarrow 0}\iint_{\Omega_T}(\partial_t\llbracket u\rrbracket_h)\varphi_{\epsilon}\,dx\,dt\\
&\geq -\int_{K}\xi^2(x,t_1)\g_+(u(x,t_1),k)\,dx+\int_{K}\xi^2(x,t_2)\g_+(u(x,t_2),k)\,dx-\iint_{K\times(t_1,t_2)}(\partial_t\xi^2)\g_+(u,k)\,dx\,dt
\end{split}
\end{align}
for any $t_0-S<t_1<t_2<t_0$. Moreover, since $\varphi_{\epsilon}(x,0)\equiv 0$ because of $\psi_{\epsilon}$,
\begin{align}\label{eq:cacc1.1}
\lim_{\epsilon\rightarrow 0}\liminf_{h\rightarrow 0}\int_{\Omega}u(x,0)\cdot\frac{1}{h}\int^{T}_{0}e^{-\frac{a}{h}}\varphi_{\epsilon}(x,a)\,da\,dx=\lim_{\epsilon\rightarrow 0}\int_{\Omega}u(x,0)\varphi_{\epsilon}(x,0)\,dx=0.
\end{align}

Now we focus on the space terms. Applying Fubini's theorem and the argument in \cite[Appendix B]{Lia24} (with the help of \cref{eq:space0}), by symmetry we may write
\begin{align*}
&\lim_{\epsilon\rightarrow 0}\liminf_{h\rightarrow 0}\int_{I}\int_{K}\int_{\mathbb{R}^n}\llbracket(\phi(u)(x,t)-\phi(u)(y,t))K(x,y;t)\rrbracket_h(\varphi_{\epsilon}(x,t)-\varphi_{\epsilon}(y,t))\,dx\,dy\,dt\\
&\quad=\lim_{\epsilon\rightarrow 0}\liminf_{h\rightarrow 0}\int_{I}\int_{K}\int_{\mathbb{R}^n}(\phi(u)(x,t)-\phi(u)(y,t))K(x,y;t)\llbracket(\varphi_{\epsilon}(x,t)-\varphi_{\epsilon}(y,t)\rrbracket_{-h}\,dx\,dy\,dt\\
&\quad=\int_{t_1}^{t_2}\int_{K}\int_{\mathbb{R}^n}(\phi(u)(x,t)-\phi(u)(y,t))(\varphi(x,t)-\varphi(y,t))K(x,y;t)\,dx\,dy\,dt= A+2B,
\end{align*}
where with $\varphi(x,t)=\xi^2(x,t)\left(\phi(u(x,t))-\phi(k)\right)_+$, we write
\begin{align*}
A = \int_{t_1}^{t_2} \int_K\int_K (\phi(u)(x,t)-\phi(u)(y,t)) (\varphi(x,t)-\varphi(y,t))K(x,y,t) \,dx\,dy\,dt
\end{align*}
and
\begin{align*}
B = \int_{t_1}^{t_2}\int_K\int_{\bb{R}^n\setminus K}(\phi(u)(x,t)-\phi(u)(y,t)) \varphi(x,t)K(x,y,t) \,dx\,dy\,dt.
\end{align*}		
We proceed with estimating $A$. For a fixed $t \in [t_1,t_2]$, let $L = \{\phi(u(\cdot,t)) > \phi(k)\} \cap K$. As in \cite[Proposition 8.5]{Coz17}, we obtain
\begin{align*}
&((\phi(u)-\phi(k))_{+}(x,t) - (\phi(u)-\phi(k))_{+}(y,t))((\phi(u)-\phi(k))_+\xi^2(x,t)-(\phi(u)-\phi(k))_+\xi^2(y,t))\\
&\geq c^{-1}((\phi(u)-\phi(k))_{+}(x,t) - (\phi(u)-\phi(k))_{+}(y,t))^2\min\{\xi^2(x,t),\xi^2(y,t)\}\\
&\quad\quad+c^{-1}(\phi(u)-\phi(k))_{-}(y,t)(\phi(u)-\phi(k))_{+}(x,t)\xi^2(x,t)\\
&\quad\quad-c(\phi(u)-\phi(k))^2_+(x,t)|\xi(y,t)-\xi(x,t)|^2
\end{align*}
for some $c\geq 1$. We now estimate the terms in the integral $B$ as follows:
\begin{align*}
-(\phi(u(x,t))-\phi(u(y,t)))(\phi(u)-\phi(k))_{+}(x,t) &\leq (\phi(u(y,t))_+-\phi(u(x,t))_+)(\phi(u)-\phi(k))_{+}(x,t)\\
&\leq (\phi(u)-\phi(k))_{+}(y,t)(\phi(u)-\phi(k))_{+}(x,t).
\end{align*}
Merging above estimates for $A$ and $B$ together with \cref{eq:cacc1} and \cref{eq:cacc1.1} into \cref{eq:moleq1}, the energy estimate follows. 
\end{proof}

Now we prove the following lemma for $m\in(0,1]$.
\begin{lem}\label{lem:ee-sing-2}
Let $m\in (0,1]$ and $u$ be a local weak sub(super)-solution in $\Omega_T$ in the sense of Definition \ref{def:weak}. Also, let $R,S>0$. Then there is a constant $C=C(\text{\textbf{data}})>0$ such that for all cylinders $Q_{R,S}(x_0,t_0) := K_R(x_0)\times(t_0-S,t_0]=K\times I \Subset \Omega_T$, for every level $k \in \bb{R}$ and for every smooth nonnegative cutoff function $\xi$ in $Q_{R,S}(x_0,t_0)$ vanishing on $\de K_R(x_0)$ we have 
\begin{align*}
&\esup_{t \in I}\int_K\xi^2\g_{\pm}(u,k)(x,t)\,dx \\
& \qquad+\int_I\int_K\int_K \min\{\xi^2(x,t),\xi^2(y,t)\}\min\{|u|^{2(m-1)}(x,t),|u|^{2(m-1)}(y,t)\}\frac{|(u-k)_{\pm}(x,t)-(u-k)_{\pm}(y,t)|^2}{|x-y|^{n+2s}}\,dx\,dy\,dt \\
&\leq C\int_I\int_K\int_K\max\{(u-k)_{\pm}^{2m}(x,t),(u-k)_{\pm}^{2m}(y,t)\}\frac{|\xi(x,t)-\xi(y,t)|^2}{|x-y|^{n+2s}}\,dx\,dy\,dt + C\int_I\int_K(u-k)_{\pm}^{m+1}|\de_t \xi^2|\,dx\,dt\\
&\qquad+C\int_I\left(\int_K\xi^2(u-k)_{\pm}^m(x,t)\,dx\right)\left(\underset{x\in \mbox{supp}(\xi(\cdot,t)); t\in I}{\esup}\int_{\bb{R}^n\setminus K}\frac{(\phi(u)-\phi(k))_{\pm}(y,t)}{|x-y|^{n+2s}}\,dy\right)\,dt + \int_K(\xi^2\g_{\pm}(u,k))(x,t_0-S)\,dx.
\end{align*}
\end{lem}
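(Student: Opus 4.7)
The plan is to derive Lemma \ref{lem:ee-sing-2} from Lemma \ref{lem:ee-sing-1} by converting every occurrence of $(\phi(u)-\phi(k))_{\pm}$ into the corresponding expression in $(u-k)_{\pm}$, crucially exploiting $m\le 1$. The main tool is the preliminary equivalence $|\phi(a)-\phi(b)|^2 \eqsim (|a|+|b|)^{2(m-1)}(a-b)^2$ combined with the monotonicity of $x\mapsto x^{m-1}$ on $(0,\infty)$: since $m-1\le 0$, one has
\begin{align*}
(|a|+|b|)^{m-1}\eqsim \max\{|a|,|b|\}^{m-1} = \min\{|a|^{m-1},|b|^{m-1}\}.
\end{align*}
This observation lets us trade $\phi$-differences for $u$-differences modulated by the singular/degenerate weight $\min\{|u|^{2(m-1)}(x),|u|^{2(m-1)}(y)\}$.

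For the RHS of Lemma \ref{lem:ee-sing-1}, I would use three pointwise bounds valid for $m\in(0,1]$:
\begin{align*}
(\phi(u)-\phi(k))_{\pm}\le c\,(u-k)_{\pm}^m,\qquad (\phi(u)-\phi(k))^2_{\pm}\le c\,(u-k)_{\pm}^{2m},\qquad \g_{\pm}(u,k)\le c\,(u-k)_{\pm}^{m+1}.
\end{align*}
All three reduce to $|\phi(u)-\phi(k)|\le c(|u|+|k|)^{m-1}|u-k|$ together with the elementary fact $(|u|+|k|)^{m-1}\le |u-k|^{m-1}$ for $m\le 1$ (since $|u-k|\le |u|+|k|$ and $x\mapsto x^{m-1}$ is decreasing), evaluated on $\{(u-k)_{\pm}>0\}$; the last one additionally uses integration against the definition of $\g_{\pm}$. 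Inserting these in the corresponding terms of Lemma \ref{lem:ee-sing-1} immediately yields the RHS of Lemma \ref{lem:ee-sing-2}; the initial-time boundary term (kept as $\g_{\pm}$) and the $y$-essential supremum in the tail term are left unchanged.

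For the LHS, the crux is the pointwise bound
\begin{align*}
|(\phi(u)-\phi(k))_{\pm}(x)-(\phi(u)-\phi(k))_{\pm}(y)|^2 \ge c^{-1}\min\{|u|^{2(m-1)}(x),|u|^{2(m-1)}(y)\}\,|(u-k)_{\pm}(x)-(u-k)_{\pm}(y)|^2,
\end{align*}
after which the non-negative isoperimetric term on the LHS of Lemma \ref{lem:ee-sing-1} is simply discarded. When $u(x),u(y)$ lie on the same side of $k$ the truncations are transparent and the estimate follows directly from the $\phi$-difference equivalence together with the $\min$-$\max$ observation above. In the ``straddling'' case, say $u(x)>k>u(y)$ in the plus case, both sides reduce to products involving $u(x)-k$ and the required bound becomes $\max\{|u(x)|,|k|\}\le c\max\{|u(x)|,|u(y)|\}$; a short case analysis on the sign of $k$ closes this, using that $u(y)<k$ forces $|u(y)|\ge|k|$ whenever $|u(x)|<|k|$ (the subcase $k>0$ being vacuous). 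The opposite straddling configuration and the minus case are symmetric. The main technical obstacle is precisely this straddling case: truncation makes the effective ``endpoints'' $u(x),k$ on the left but $u(x),u(y)$ on the right, and the argument only closes because $m\le 1$ reverses the monotonicity of $x\mapsto x^{m-1}$ so that the far value $|u(y)|$ dominates $|k|$ when needed.
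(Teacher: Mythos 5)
Your proof is correct and takes essentially the same route as the paper: it deduces the estimate from Lemma \ref{lem:ee-sing-1} via the pointwise bounds $(\phi(u)-\phi(k))_{\pm}\leq c(u-k)_{\pm}^m$ and $\g_{\pm}(u,k)\leq c(u-k)_{\pm}^{m+1}$ on the right, and the lower bound $|(\phi(u)-\phi(k))_{\pm}(x)-(\phi(u)-\phi(k))_{\pm}(y)|\geq c^{-1}\min\{|u(x)|^{m-1},|u(y)|^{m-1}\}\,|(u-k)_{\pm}(x)-(u-k)_{\pm}(y)|$ on the left. The paper compresses the last step into ``a case by case analysis and an application of the mean value theorem''; your explicit treatment of the straddling configuration matches the details the paper itself spells out later in the proof of Lemma \ref{lem:ee.m<1}.
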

\begin{proof}
We again focus on subsolutions and positive truncations. The case for supersolutions and negative truncations is analogous. We note that since $m \in (0,1]$,
\begin{align*}
(\phi(u) - \phi(k))_+ \leq (u-k)_+^m
\end{align*}
and so in particular, 
\begin{align*}
\g_+(u,k) \leq \int_k^u (R-k)_+^m\,dR = \frac{1}{m+1}(u-k)_+^{m+1}.
\end{align*}
Finally, with $m \in (0,1]$, a case by case analysis and an application of the mean value theorem reveals
\begin{align*}
(\phi(u)-\phi(k))_{+}(x,t)-(\phi(u)-\phi(k))_{+}(y,t) \geq m\min\{|u(x,t)|^{m-1},|u(y,t)|^{m-1}\}((u-k)_{+}(x,t)-(u-k)_{+}(y,t)).
\end{align*}
Using the above estimates in Lemma \ref{lem:ee-sing-1}, the conclusion follows. 
\end{proof}
	
The following is the other version of Caccioppoli inequality.
\begin{lem}\label{lem:ee}
Let $m\geq 1$ and $u:\Omega_T\rightarrow\mathbb{R}$ be a local weak sub(super)-solution in $\Omega_T$ in the sense of Definition \ref{def:weak}. Also, let $R,S>0$. Then there is a constant $C=C(\text{\textbf{data}})>0$ such that for all cylinders $Q_{R,S}(x_0,t_0) := K_R(x_0)\times(t_0-S,t_0]=K\times I\Subset \Omega_T$, for every level $k \in \bb{R}$ and for every smooth cutoff function $\xi$ in $Q_{R,S}(x_0,t_0)$ vanishing on $\de K_R(x_0)$ we have 
\begin{align*}
	&\esup_{t \in I}\int_K\xi^2(u-k)_{\pm}^2(x,t)\,dx \\
	&+ \int_I\int_K\int_K \min\{\xi^2(x,t),\xi^2(y,t)\}\frac{((\phi(u)-\phi(k))_{\pm}(x,t)-(\phi(u)-\phi(k))_{\pm}(y,t))((u-k)_{\pm}(x,t)-(u-k)_{\pm}(y,t))}{|x-y|^{n+2s}}\,dx\,dy\,dt \\
	&+\int_I\left(\int_K\xi^2(x,t)(u-k)_{\pm}(x,t)\,dx\right)\left(\int_K \frac{(\phi(u)-\phi(k))_{\mp}(y,t)}{|x-y|^{n+2s}}\,dy\right)\,dt\\
	&\leq C\int_I\int_K\int_K\max\{|u|^{m-1}(x,t),|u|^{m-1}(y,t)\}\max\{(u-k)_{\pm}^{2}(x,t),(u-k)_{\pm}^{2}(y,t)\}\frac{|\xi(x,t)-\xi(y,t)|^2}{|x-y|^{n+2s}}\,dx\,dy\,dt\\
	&\quad + C\int_I\int_K(u-k)_{\pm}^{2}|\de_t \xi^2|\,dx\,dt\\
	&+C\int_I\left(\int_K\xi^2(u-k)_{\pm}(x,t)\,dx\right)\left(\underset{x\in \mbox{supp}(\xi(\cdot,t)); t\in I}{\esup}\int_{\bb{R}^n\setminus K}\frac{(\phi(u)-\phi(k))_{\pm}(y,t)}{|x-y|^{n+2s}}\,dy\right)\,dt + \int_K(\xi^2(u-k)_{\pm}^2)(x,t_0-S)\,dx.
	\end{align*}	
\end{lem}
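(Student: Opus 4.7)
The plan is to follow the template of Lemma \ref{lem:ee-sing-1} with the test function
\[
\varphi_\epsilon(x,t)=\xi^2(x,t)\,\psi_\epsilon(t)\,(u(x,t)-k)_+
\]
(and analogously with $(u-k)_-$ for the other sign), where $\psi_\epsilon$ is the time cutoff from \cref{eq:psi}. The hypothesis $m\geq 1$ together with the regularity $u\in L^2_{\text{loc}}(0,T;W^{s,2}_{\text{loc}}(\Omega))$ from \cref{eq:space} makes $\varphi_\epsilon$ admissible in the mollified identity \cref{eq:moleq} after a routine density argument. The time-derivative term is handled exactly as in \cref{eq:cacc1}: the role of $\g_+(u,k)$ is now played by $\tfrac12(u-k)_+^2$, and passing $h\to 0$ and then $\epsilon\to 0$ yields the boundary supremum integral together with the Caccioppoli term $\int_I\int_K (u-k)_+^2\,|\partial_t\xi^2|\,dx\,dt$; the contribution from $t=0$ vanishes thanks to $\psi_\epsilon(0)=0$.

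The core of the proof is the nonlocal bilinear form, split as $A+2B$ with $A$ over $K\times K$ and $B$ over $K\times(\mathbb{R}^n\setminus K)$. For $A$ I need the pointwise algebraic inequality
\begin{align*}
&(\phi(a)-\phi(b))\bigl((a-k)_+\xi_a^2-(b-k)_+\xi_b^2\bigr) \\
&\quad\geq c^{-1}\min\{\xi_a^2,\xi_b^2\}\bigl((\phi(a)-\phi(k))_+-(\phi(b)-\phi(k))_+\bigr)\bigl((a-k)_+-(b-k)_+\bigr) \\
&\qquad+c^{-1}\xi_a^2(a-k)_+(\phi(b)-\phi(k))_- \\
&\qquad-c\,\max\{|a|^{m-1},|b|^{m-1}\}\max\{(a-k)_+^2,(b-k)_+^2\}\,|\xi_a-\xi_b|^2.
\end{align*}
This is established by writing $(a-k)_+\xi_a^2-(b-k)_+\xi_b^2=((a-k)_+-(b-k)_+)\xi_a^2+(b-k)_+(\xi_a^2-\xi_b^2)$, expanding, and doing a case analysis on the signs of $a-k$ and $b-k$. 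Monotonicity of $\phi$ produces the good min-$\xi^2$ and isoperimetric terms from the first summand, while the cross term $(\phi(a)-\phi(b))(b-k)_+(\xi_a^2-\xi_b^2)$ is controlled by factoring $\xi_a^2-\xi_b^2=(\xi_a-\xi_b)(\xi_a+\xi_b)$, invoking the two-sided comparison $|\phi(a)-\phi(b)|\leq c(|a|+|b|)^{m-1}|a-b|$ for $m\geq 1$ recalled in the preliminaries, and applying Young's inequality; the lower bound $(\phi(a)-\phi(b))(a-b)\geq c^{-1}(|a|+|b|)^{m-1}(a-b)^2$ absorbs the diffusion-like part into the good term, leaving the desired $|\xi_a-\xi_b|^2$ remainder. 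The piece $B$ is handled verbatim as in Lemma \ref{lem:ee-sing-1}: the bound $-(\phi(u(x))-\phi(u(y)))(u-k)_+(x)\leq(\phi(u)-\phi(k))_+(y)(u-k)_+(x)$ combined with the vanishing of $\xi$ on $\partial K$ produces the tail integral.

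The main obstacle is the asymmetric algebraic inequality above. Unlike in Lemma \ref{lem:ee-sing-1}, where both slots of the bilinear form carry the same factor $(\phi(u)-\phi(k))_+$ and the symmetric decomposition from \cite[Proposition 8.5]{Coz17} applies directly, here the test factor $(u-k)_+$ and the flux $\phi(u)-\phi(k)$ are structurally different, so the extraction of the min-$\xi^2$ good term (driven by $\phi$-differences) and the $|\xi_a-\xi_b|^2$ bad term (driven by $u$-differences) requires the use of the $m\geq 1$ comparison in both directions, and the factor $\max\{|u|^{m-1}(x),|u|^{m-1}(y)\}$ on the right-hand side of the lemma appears as the precise cost of this conversion. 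Once the pointwise inequality is in place, combining the time and spatial contributions and passing to the limit in $h$ and $\epsilon$ via the mollification identity \cref{eq:identity} delivers the stated estimate.
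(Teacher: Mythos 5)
Your argument is correct and follows essentially the same route as the paper: the same mollified test function $\xi^2\psi_\epsilon(u-k)_\pm$, the same $A+2B$ split of the nonlocal form, the same tail bound for $B$, and the same case analysis on the signs of $u-k$ reorganized into a single pointwise algebraic inequality. The only notable stylistic difference is in the hard subcase ($x,y$ both above level $k$ with $\xi(x)<\xi(y)$): you run Young's inequality via the factoring $\xi^2(y)-\xi^2(x)=(\xi(y)-\xi(x))(\xi(y)+\xi(x))$ together with the two-sided comparison for $\phi$, whereas the paper makes the solution-dependent choice $\epsilon=\tfrac12\bigl((u-k)_+(x)-(u-k)_+(y)\bigr)/(u-k)_+(x)$ in Young and invokes only the one-sided bound $\frac{\phi(u(x))-\phi(u(y))}{u(x)-u(y)}\leq m\max\{|u(x)|^{m-1},|u(y)|^{m-1}\}$; both produce the same $\max\{|u|^{m-1}\}\max\{(u-k)_\pm^2\}|\xi(x)-\xi(y)|^2$ error term.
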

\begin{proof}
We deal with subsolutions since the case for supersolutions is analogous. With $\psi_{\epsilon}$ defined in \cref{eq:psi}, we choose a test function
\begin{align*}
\varphi(x,t) = \xi^2(x,t)\psi_{\epsilon}(t)(u-k)_{+}(x,t)\in L^{2}(0,T; W^{s,2}_0(\Omega))
\end{align*}
to \cref{eq:moleq}. Then by the same argument in the proof of Lemma \ref{lem:ee-sing-1} with the help of \cref{eq:space0} and \cref{eq:space}, we obtain
\begin{align}\label{eq:moleq2}
\begin{split}
&\int_{K_R}\xi^2(x,t_2)(u(x,t_2)-k)_+^2\,dx+\int_{t_1}^{t_2}\int_{K}\int_{\mathbb{R}^n}(\phi(u)(x,t)-\phi(u)(y,t))(\varphi(x,t)-\varphi(y,t))K(x,y;t)\,dx\,dy\,dt\\
&\quad \leq\int_{K_R}\xi^2(x,t_1)(u(x,t_1)-k)^2_+\,dx+\iint_{K_{R}\times(t_1,t_2)}(\partial_t\xi^2)(u-k)_+^2\,dx\,dt.
\end{split}
\end{align}
Now we focus on the space terms. By symmetry, we may write 
\begin{align*}
\int_{t_1}^{t_2}\int_{K}\int_{\mathbb{R}^n}(\phi(u)(x,t)-\phi(u)(y,t))(\varphi(x,t)-\varphi(y,t))K(x,y;t)\,dx\,dy\,dt= A+2B,
\end{align*}
where with $\varphi(x,t)=\xi^2(x,t)\left(u-k\right)_+(x,t)$, we write
\begin{align*}
A = \int_{t_1}^{t_2} \int_K\int_K (\phi(u)(x,t)-\phi(u)(y,t)) (\varphi(x,t)-\varphi(y,t))K(x,y,t) \,dx\,dy\,dt
\end{align*}
and
\begin{align*}
B = \int_{t_1}^{t_2}\int_K\int_{\bb{R}^n\setminus K}(\phi(u)(x,t)-\phi(u)(y,t)) \varphi(x,t)K(x,y,t) \,dx\,dy\,dt.
\end{align*}		
We proceed with estimating $A$. For a fixed $t \in I$, let 
\begin{align*}
L = \{\phi(u(\cdot,t)) \geq \phi(k)\} \cap K = \{u(\cdot,t)\geq k\}\cap K.
\end{align*}
Then we have the following three cases: 
\begin{itemize}
\item Case 1: if $x,y \in K\setminus L$ then $\xi(x,t) = \xi(y,t) = 0$ and in particular $\varphi(x)-\varphi(y) = 0$.
\item Case 2: if $x\in L$ and $y \in  K\setminus L$ then 
\begin{align*}
&(\phi(u(x,t))-\phi(u(y,t)))(\varphi(x,t)-\varphi(y,t))\\
&= \left((\phi(u)-\phi(k))_{+}(x,t)+(\phi(u)-\phi(k))_{-}(y,t)\right)(u-k)_{+}(x,t)\xi^2(x,t)\\
&= (\phi(u)-\phi(k))_{+}((u-k)_+\xi^2)(x,t) + (\phi(u)-\phi(k))_{-}(y,t)(u-k)_{+}(x,t)\xi^2(x,t).
\end{align*}
\item Case 3: Suppose that $x,y \in L$. There is no loss of generality in assuming that $u(x,t)>u(y,t)$ because the expression is now symmetric in $x$ and $y$. Note that $u(x,t)>u(y,t)$ also implies that $\phi(u(x,t))>\phi(u(y,t))$. We have
\begin{align*}
&(\phi(u(x,t))-\phi(u(y,t)))(\varphi(x,t)-\varphi(y,t)) \\
&= ((\phi(u)-\phi(k))_{+}(x,t) - (\phi(u)-\phi(k))_{+}(y,t))((u-k)_+\xi^2(x,t)-(u-k)_+\xi^2(y,t)).
\end{align*}
We now deal with two further subcases. 
\begin{itemize}
\item a. if $\xi(x,t) \geq \xi(y,t)$ then
\begin{align*}
((\phi(u)-\phi(k))_{+}(x,t) - (\phi(u)-\phi(k))_{+}(y,t))((u-k)_+\xi^2(x,t)-(u-k)_+\xi^2(y,t))\\
\geq \xi^2(y,t)((\phi(u)-\phi(k))_{+}(x,t) - (\phi(u)-\phi(k))_{+}(y,t))((u-k)_+(x,t)-(u-k)_+(y,t)).
\end{align*}

\item  b. if $\xi(x,t) < \xi(y,t)$ then
\begin{align*}
&((\phi(u)-\phi(k))_{+}(x,t) - (\phi(u)-\phi(k))_{+}(y,t))((u-k)_+\xi^2(x,t)-(u-k)_+\xi^2(y,t))\\
&=((\phi(u)-\phi(k))_{+}(x,t) - (\phi(u)-\phi(k))_{+}(y,t))((u-k)_+(x,t)-(u-k)_+(y,t))\xi^2(y,t)\\
&- ((\phi(u)-\phi(k))_{+}(x,t) - (\phi(u)-\phi(k))_{+}(y,t))(u-k)_+(x,t)(\xi^2(y,t)-\xi^2(x,t)).
\end{align*}
We further estimate
\begin{align*}
&((\phi(u)-\phi(k))_{+}(x,t)-(\phi(u)-\phi(k))_{+}(y,t))(u-k)_+(x,t)(\xi^2(y,t)-\xi^2(x,t))\\
&\leq ((\phi(u)-\phi(k))_{+}(x,t)-(\phi(u)-\phi(k))_{+}(y,t))(u-k)_+(x,t)(\epsilon\xi^2(y,t)+4\epsilon^{-1}|\xi(y,t)-\xi(x,t)|^2)
\end{align*}
for any $\epsilon > 0$.
We set
\begin{align*}
\epsilon = \frac{1}{2}\frac{(u-k)_+(x,t)-(u-k)_+(y,t)}{(u-k)_+(x,t)} > 0
\end{align*}
to obtain
\begin{align*}
&((\phi(u)-\phi(k))_{+}(x,t)-(\phi(u)-\phi(k))_{+}(y,t))(u-k)_+(x,t)(\xi^2(y,t)-\xi^2(x,t))\\
&\leq \frac{1}{2}((\phi(u)-\phi(k))_{+}(x,t)-(\phi(u)-\phi(k))_{+}(y,t))((u-k)_+(x,t)-(u-k)_+(y,t))\xi^2(y,t)\\
&+8(u-k)_+^2(x,t)|\xi(y,t)-\xi(x,t)|^2\frac{(\phi(u)-\phi(k))_{+}(x,t)-(\phi(u)-\phi(k))_{+}(y,t)}{(u-k)_+(x,t)-(u-k)_+(y,t)}.
\end{align*}
\end{itemize}
\end{itemize}
For $m\geq 1$, we note that
\begin{align}\label{eq:mmax}
\frac{(\phi(u)-\phi(k))_{+}(x,t)-(\phi(u)-\phi(k))_{+}(y,t)}{(u-k)_+(x,t)-(u-k)_+(y,t)} = \frac{\phi(u(x,t))-\phi(u(y,t))}{u(x,t)-u(y,t)} \leq m\max\{|u(x,t)|^{m-1},|u(y,t)|^{m-1}\}.
\end{align}

We now estimate the integrand in $B$ as follows:
\begin{align*}
-(\phi(u(x,t))-\phi(u(y,t)))(u-k)_{+}(x,t) &\leq ((\phi(u(y,t)))_+-\phi(u(x,t))_+)(u-k)_{+}(x,t)\\
&\leq (\phi(u)-\phi(k))_{+}(y,t)(u-k)_{+}(x,t).
\end{align*}
Using the above estimate for $A$ and $B$ to \cref{eq:moleq2}, the energy estimate follows.  
\end{proof}

\begin{rem}\label{rem:positive}
We claim that with $m\geq 1$,
\begin{align}\label{eq:less}
((\phi(u)-\phi(k))_{\pm}(x,t)-(\phi(u)-\phi(k))_{\pm}(y,t))((u-k)_{\pm}(x,t)-(u-k)_{\pm}(y,t)) \geq 0.
\end{align}
To do this, we only check 
\begin{align}\label{eq:less'}
((\phi(u)-\phi(k))_{+}(x,t)-(\phi(u)-\phi(k))_{+}(y,t))((u-k)_{+}(x,t)-(u-k)_{+}(y,t)) \geq 0,
\end{align}
since the other case is similarly proved. For a fixed $t \in \mathbb{R}$, let $L = \{\phi(u(\cdot,t)) \geq \phi(k)\} \cap K = \{u(\cdot,t)\geq k\}\cap K$ as in the above lemma. By symmetry, without loss of generality we assume $u(x)\geq u(y)$. If $x,y\in K\setminus L$, then both sides are zero. When $x\in L$ and $y\in K\setminus L$, then since $m\geq 1$,
\begin{align}\label{eq:m>1}
\begin{split}
&(\phi(u)-\phi(k))_{+}(x,t)-(\phi(u)-\phi(k))_{+}(y,t)\\
&=(\phi(u)-\phi(k))_{+}(x,t)\eqsim(|u(x,t)|+|k|)^{m-1}(u-k)_{+}(x,t)\geq |u(x,t)|^{m-1}(u-k)_{+}(x,t).
\end{split}
\end{align}
Also, if $x,y\in L$, then we have
\begin{align*}
(\phi(u)-\phi(k))_{+}(x,t)-(\phi(u)-\phi(k))_{+}(y,t)&=\phi(u)(x,t)-\phi(u)(y,t)\\
&\eqsim(|u(x,t)|+|u(y,t)|)^{m-1}(u(x,t)-u(y,t)).
\end{align*}
Then \cref{eq:less'} is proved.
\end{rem}

\begin{rem}\label{rem:ee-sing}
The above lemma does not hold when $m\in(0,1)$, since the integral involving $\max\{|u|^{m-1}(x,t),|u|^{m-1}(y,t)\}$ in the right-hand side of the estimate may diverge. However, an analogous energy estimate in Lemma \ref{lem:ee} continues to hold for $m \in (0,1)$ under the following restrictions.
\begin{itemize}
\item a. For bounded subsolutions under positive truncation we restrict to $k>0$.
\item b. For bounded supersolutions under negative truncation we restrict to $k<0$.
\end{itemize}
\end{rem}

We justify the above remark as below. First of all, the following observation is necessary.

\begin{lem}\label{lem:L2}
Let $m\in(0,1]$. Then for an open set $U\subset\mathbb{R}^n$, open interval $I_0\subset\mathbb{R}$, and $k\in\mathbb{R}$, 
\begin{align*}
\phi(u)\in L^2_{\text{loc}}(I_0;W^{s,2}_{\text{loc}}(U))\,\,\,\,\text{and}\,\,\,\, u\,\,\,\,\text{is locally bounded}\quad\implies \quad (u-k)_+\in L^2_{\text{loc}}(I_0;W^{s,2}_{\text{loc}}(U)).
\end{align*}
\end{lem}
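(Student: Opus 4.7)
The plan is to reduce everything to a pointwise algebraic comparison between $|a-b|$ and $|\phi(a)-\phi(b)|$ on bounded sets. Since $m\in(0,1]$, the exponent $m-1\leq 0$, so the prefactor $(|a|+|b|)^{m-1}$ in the preliminary relation
\begin{align*}
(\phi(a)-\phi(b))(a-b)\eqsim (|a|+|b|)^{m-1}(a-b)^2
\end{align*}
is \emph{bounded below} on bounded sets. Concretely, for $|a|,|b|\leq M$ one has $(|a|+|b|)^{m-1}\geq (2M)^{m-1}$, and combining with the above equivalence and Cauchy--Schwarz yields
\begin{align*}
|a-b|\leq C(m)\,M^{1-m}\,|\phi(a)-\phi(b)|.
\end{align*}

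First, I would fix arbitrary compact sets $K'\Subset U$ and $I'\Subset I_0$. Local boundedness of $u$ furnishes $M=M(K',I')$ with $|u|\leq M$ on $K'\times I'$. Applying the pointwise inequality above with $a=u(x,t)$, $b=u(y,t)$, and integrating gives
\begin{align*}
\int_{I'}\!\!\int_{K'}\!\!\int_{K'}\frac{|u(x,t)-u(y,t)|^2}{|x-y|^{n+2s}}\,dx\,dy\,dt
\leq C(m,M)\int_{I'}\!\!\int_{K'}\!\!\int_{K'}\frac{|\phi(u)(x,t)-\phi(u)(y,t)|^2}{|x-y|^{n+2s}}\,dx\,dy\,dt,
\end{align*}
which is finite by the hypothesis $\phi(u)\in L^2_{\text{loc}}(I_0;W^{s,2}_{\text{loc}}(U))$. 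The $L^2$-part is immediate from local boundedness of $u$, and hence $u\in L^2_{\text{loc}}(I_0;W^{s,2}_{\text{loc}}(U))$.

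Finally, to pass from $u$ to its truncation $(u-k)_+$, I would invoke the standard $1$-Lipschitz estimate $|(u-k)_+(x)-(u-k)_+(y)|\leq |u(x)-u(y)|$ (a case-by-case check), which shows that the fractional seminorm of $(u-k)_+$ is controlled by that of $u$; its $L^2_{\text{loc}}$-membership is again immediate from local boundedness. This yields $(u-k)_+\in L^2_{\text{loc}}(I_0;W^{s,2}_{\text{loc}}(U))$ and completes the proof.

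The argument is essentially algebraic and carries no real difficulty; the only mild subtlety is that the implicit constants depend on the local supremum of $u$, but this is irrelevant for a membership (as opposed to a quantitative) statement. The \textbf{key observation} that makes this work in the fast diffusion range is precisely that $m-1\leq 0$: if $m>1$ instead, the factor $(|a|+|b|)^{m-1}$ would degenerate near $u\equiv 0$ and the passage from $\phi(u)$ to $u$ in the Sobolev seminorm would require a different argument.
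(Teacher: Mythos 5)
Your proof is correct and follows essentially the same route as the paper: the key ingredient in both cases is the pointwise comparability $|u(x,t)-u(y,t)|\leq C(m)M^{1-m}|\phi(u)(x,t)-\phi(u)(y,t)|$ on sets where $|u|\leq M$, which is available precisely because $m-1\leq 0$. The only cosmetic difference is that you factor through $u\in L^2_{\text{loc}}(W^{s,2}_{\text{loc}})$ and then invoke the $1$-Lipschitz bound for the truncation, whereas the paper performs the case analysis ($u(x),u(y)>k$; $u(x)>k>u(y)$; both $\leq k$) directly at the level of $(u-k)_+$ — the two are logically equivalent.
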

\begin{proof}
Choose an open interval $I$ an a compact set $K$ such that $I\times K\subset I_0\times U$. Note that if $u(x,t),u(y,t)>k$, then
\begin{align*}
|\phi(u)(x,t)-\phi(u)(y,t)|&=m|u(x,t)-u(y,t)|\int_{0}^{1}|\xi u(x,t)+(1-\xi)u(y,t)|^{m-1}\,d\xi\\
&\geq C(m)(|k|+M)^{m-1} |u(x,t)-u(y,t)|\\
&\eqsim (|k|+M)^{m-1}|(u(x,t)-k)_+-(u(y,t)-k)_+|,
\end{align*}
where we write $M\coloneqq \esup_{I\times K}|u|$.
In addition, we have that if $u(x,t)>k>u(y,t)$, then 
\begin{align*}
|(\phi(u)-\phi(k))_{+}(x,t)-(\phi(u)-\phi(k))_{+}(y,t)|&=|\phi(u)(x,t)-\phi(k)|\\
&=m|u(x,t)-k|\int_{0}^{1}|\xi u(x,t)+(1-\xi)k(y,t)|^{m-1}\,d\xi\\
&\geq C(m) (|k|+M)^{m-1}|u(x,t)-k|\\
&\eqsim (|k|+M)^{m-1}|(u(x,t)-k)_+-(u(y,t)-k)_+|.
\end{align*}
Using this inequalities, we deduce that 
\begin{align*}
\int_{K}\int_{K}\frac{|(u(x,t)-k)_+-(u(y,t)-k)_+|^2}{|x-y|^{n+2s}}\leq C\int_{K}\int_{K}\frac{ |\phi(u)(x,t)-\phi(u)(y,t)|^2}{|x-y|^{n+2s}},
\end{align*}
with the constant $C$ now also depending on $M$; thus $(u-k)_{+}\in L^2(I;W^{s,2}(K))$.
\end{proof}
\begin{rem}\label{rem:no-prob}
    We note that although the estimate in \cref{lem:L2} depends on the local bound for $u$, the estimate itself is never used. We need the lemma only to justify the use of $(u-k)_{\pm}$ as a test function to derive energy estimates when $m\in (0,1)$. 
\end{rem}

Now employing Lemma \ref{lem:sub} and the above lemma to Lemma \ref{lem:ee} together with Definition \ref{def:weak} yields the following estimate.

\begin{lem}\label{lem:ee.m<1}
Let $m\in(0,1]$ and $u:\Omega\times(0,T)\rightarrow\mathbb{R}$ be a bounded, local weak sub(super)-solution in $\Omega_T\subset\mathbb{R}^n\times\mathbb{R}$ in the sense of Definition \ref{def:weak}. Also, let $R,S>0$. Then there is a constant $C=C(\text{\textbf{data}})>0$ such that for all cylinders $Q_{R,S}(x_0,t_0) := K_R(x_0)\times(t_0-S,t_0]=K\times I\Subset \Omega_T$, for every piecewise smooth cutoff function $\xi$ in $Q_{R,S}(x_0,t_0)$ vanishing on $\de K_R(x_0)$ we have 
\begin{align*}
	&\esup_{t \in I}\int_K\xi^2(u-k)_{\pm}^2(x,t)\,dx \\
	&+ \int_I\int_K\int_K \min\{\xi^2(x,t),\xi^2(y,t)\}\min\{|u(x,t)|^{m-1},|u(y,t)|^{m-1}\}\frac{|(u-k)_{\pm}(x,t)-(u-k)_{\pm}(y,t)|^2}{|x-y|^{n+2s}}\,dx\,dy\,dt \\
	&+\int_I\left(\int_K\xi^2(x,t)(u-k)_{\pm}(x,t)\,dx\right)\left(\int_K \frac{(\phi(u)-\phi(k))_{\mp}(y,t)}{|x-y|^{n+2s}}\,dy\right)\,dt\\
	&\leq C\int_I\int_K\int_K\max\{|u|^{m-1}(x,t),|u|^{m-1}(y,t)\}\max\{(u-k)_{\pm}^{2}(x,t),(u-k)_{\pm}^{2}(y,t)\}\frac{|\xi(x,t)-\xi(y,t)|^2}{|x-y|^{n+2s}}\,dx\,dy\,dt\\
	&\quad + C\int_I\int_K(u-k)_{\pm}^{2}|\de_t \xi^2|\,dx\,dt\\
	&+C\int_I\left(\int_K\xi^2(u-k)_{\pm}(x,t)\,dx\right)\left(\underset{x\in \mbox{supp}(\xi(\cdot,t)); t\in I}{\esup}\int_{\bb{R}^n\setminus K}\frac{(\phi(u)-\phi(k))_{\pm}(y,t)}{|x-y|^{n+2s}}\,dy\right)\,dt + \int_K\xi^2(u-k)_{\pm}^2(x,t_0-S)\,dx,
	\end{align*}
with any $k>0$ in case of $(u-k)_+$ and $k<0$ in case of $(u-k)_-$ in the above estimate.	
\end{lem}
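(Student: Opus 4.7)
The plan is to mimic the derivation of \cref{lem:ee} with the test function $\varphi(x,t)=\xi^2(x,t)\psi_{\epsilon}(t)(u-k)_{\pm}(x,t)$ (where $\psi_\epsilon$ is as in \cref{eq:psi}) inserted into the mollified weak formulation \cref{eq:moleq} provided by \cref{lem:sub}, and to check that the sign restriction on $k$ precisely neutralises the obstruction flagged in \cref{rem:ee-sing}.

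The first task will be to justify admissibility of $\varphi$. For $m\in(0,1]$, hypothesis \cref{eq:space0} only provides $\phi(u)\in L^2_{\text{loc}}(0,T;W^{s,2}_{\text{loc}}(\Omega))$, not $u$ itself; however, since $u$ is locally bounded, \cref{lem:L2} will give $(u-k)_{\pm}\in L^2_{\text{loc}}(0,T;W^{s,2}_{\text{loc}}(\Omega))$ for every $k\in\bb{R}$, placing $\varphi$ in $L^2(0,T;W^{s,2}_0(\Omega))$ with compact support in $\Omega_T$. The time derivative and initial-trace terms then pass to the limit $h\to 0$, $\epsilon\to 0$ exactly as in \cref{lem:ee-sing-1}, producing the $\esup$ contribution on the LHS and the $\de_t\xi^2$ contribution on the RHS. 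The symmetrised nonlocal integral splits as $A+2B$ (diagonal plus tail), and the tail $B$ is handled by the elementary inequality
\begin{align*}
-(\phi(u(x,t))-\phi(u(y,t)))(u-k)_+(x,t)\leq (\phi(u)-\phi(k))_+(y,t)(u-k)_+(x,t),
\end{align*}
yielding the last tail term in the statement.

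The decisive step will be the case analysis for $A$ relative to $L=\{u(\cdot,t)>k\}\cap K$. Cases 1 and 2 use no information about $\phi'$ and reproduce verbatim the bounds from \cref{lem:ee}; in particular Case 2 contributes the third, isoperimetric-type term on the LHS. In Case 3 both $u(x,t),u(y,t)>k$, so applying the mean value theorem to $\phi(u)=u^m$ on the positive axis, and using that $\phi'(u)=mu^{m-1}$ is decreasing for $m\in(0,1]$, I will have
\begin{align*}
m\min\{|u(x,t)|^{m-1},|u(y,t)|^{m-1}\}\leq \frac{\phi(u(x,t))-\phi(u(y,t))}{u(x,t)-u(y,t)}\leq m\max\{|u(x,t)|^{m-1},|u(y,t)|^{m-1}\}.
\end{align*}
The lower bound will furnish the second term on the LHS (with the $\min\{|u|^{m-1}\}$ factor), while the upper bound will replace \cref{eq:mmax} in the Case 3b absorption step, producing the $\max\{|u|^{m-1}\}$ factor on the RHS. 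The main obstacle, and the reason for the sign restriction on $k$, is that the upper bound above is unbounded as $u\to 0$; however the restriction $k>0$ forces $|u(x,t)|^{m-1},|u(y,t)|^{m-1}\leq k^{m-1}<\infty$ throughout Case 3, so every RHS integral remains finite and the energy estimate closes. The negative truncation case with $k<0$ follows by a symmetric argument applied to $-u$.
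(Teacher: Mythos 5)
Your proposal matches the paper's proof: the same test function $\xi^2\psi_\epsilon(u-k)_\pm$ justified via Lemma \ref{lem:L2}, the same $A+2B$ split with the same case analysis, and the same use of the mean value theorem on $\phi$ to produce the $\min\{|u|^{m-1}\}$ lower bound (the paper's \eqref{eq:less.1}) on the left and the $\max\{|u|^{m-1}\}$ upper bound on the right, with $k>0$ keeping both finite on $\{u>k\}$. The only small imprecision is that the $\min$-weighted coercivity is also needed in Case~2 (one endpoint below $k$), not just Case~3, but the identical MVT comparison between $u(x,t)$ and $k$ handles it, so this is a notational rather than a substantive gap.
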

\begin{proof}
The proof follows along the same lines as in the previous energy estimates. However, we provide it here for the sake of completeness. We deal with subsolutions since the case for supersolutions is analogous. With the help of Lemma \ref{lem:L2}, we choose a test function
\begin{align*}
\varphi(x,t) = \xi^2(x,t)\psi_{\epsilon}(t)(u-k)_{+}(x,t)\in L^{2}(0,T; W^{s,2}_0(\Omega))
\end{align*}
to \cref{eq:moleq}.

 Then by the same argument in the proof of Lemma \ref{lem:ee-sing-1} with the help of \cref{eq:space0}, we obtain
\begin{align}\label{eq:moleq3}
\begin{split}
&\int_{K_R}\xi^2(x,t_2)(u(x,t_2)-k)_+^2\,dx+\int_{t_1}^{t_2}\int_{K}\int_{\mathbb{R}^n}(\phi(u)(x,t)-\phi(u)(y,t))(\varphi(x,t)-\varphi(y,t))K(x,y;t)\,dx\,dy\,dt\\
&\quad \leq\int_{K_R}\xi^2(x,t_1)(u(x,t_1)-k)^2_+\,dx+\iint_{K_{R}\times(t_1,t_2)}\partial_t\xi^2(u-k)_+^2\,dx\,dt.
\end{split}
\end{align}
Now we focus on the space terms. By symmetry, we may write 
\begin{align*}
\int_{t_1}^{t_2}\int_{K}\int_{\mathbb{R}^n}(\phi(u)(x,t)-\phi(u)(y,t))(\varphi(x,t)-\varphi(y,t))K(x,y;t)\,dx\,dy\,dt= A+2B,
\end{align*}
where with $\varphi(x,t)=\xi^2(x,t)\left(u-k\right)_+(x,t)$, we write
\begin{align*}
A = \int_{t_1}^{t_2} \int_K\int_K (\phi(u)(x,t)-\phi(u)(y,t)) (\varphi(x,t)-\varphi(y,t))K(x,y,t) \,dx\,dy\,dt
\end{align*}
and
\begin{align*}
B = \int_{t_1}^{t_2}\int_K\int_{\bb{R}^n\setminus K}(\phi(u)(x,t)-\phi(u)(y,t)) \varphi(x,t)K(x,y,t) \,dx\,dy\,dt.
\end{align*}

We proceed with estimating $A$. For a fixed $t \in [t_1,t_2]$, let $L = \{\phi(u(\cdot,t)) \geq \phi(k)\} \cap K = \{u(\cdot,t)\geq k\}\cap K|$. Then we have the following three cases: In case of $x,y \in K\setminus L$, and  $x\in L$ and $y \in  K\setminus L$, we can estimate in the same way as the proof of Lemma \ref{lem:ee}. When $x,y \in L$, we can arrive at
\begin{align*}
&(\phi(u(x,t))-\phi(u(y,t)))(\varphi(x,t)-\varphi(y,t))\\
&\leq \frac{1}{2}((\phi(u)-\phi(k))_{+}(x,t)-(\phi(u)-\phi(k))_{+}(y,t))((u-k)_+(x,t)-(u-k)_+(y,t))\xi^2(y,t)\\
&+8(u-k)_+^2(x,t)|\xi(y,t)-\xi(x,t)|^2\frac{(\phi(u)-\phi(k))_{+}(x,t)-(\phi(u)-\phi(k))_{+}(y,t)}{(u-k)_+(x,t)-(u-k)_+(y,t)}.
\end{align*}
We note that
\begin{align*}
\frac{(\phi(u)-\phi(k))_{+}(x,t)-(\phi(u)-\phi(k))_{+}(y,t)}{(u-k)_+(x,t)-(u-k)_+(y,t)} = \frac{\phi(u(x,t))-\phi(u(y,t))}{u(x,t)-u(y,t)} \leq m\max\{|u(x,t)|^{m-1},|u(y,t)|^{m-1}\}.
\end{align*}

We also claim that when $m\in(0,1)$,
\begin{align}\label{eq:less.1}
(\phi(u)-\phi(k))_{+}(x,t)-(\phi(u)-\phi(k))_{+}(y,t) \geq C^{-1}\min\{|u(x,t)|^{m-1},|u(y,t)|^{m-1}\}((u-k)_{+}(x,t)-(u-k)_{+}(y,t)).
\end{align}
Indeed, if $x,y\in K\setminus L$, then the both sides are zero. When $x\in L$ and $y\in K\setminus L$, then since $m\in(0,1)$,
\begin{align*}
(\phi(u)-\phi(k))_{+}(x,t)-(\phi(u)-\phi(k))_{+}(y,t)&=(\phi(u)-\phi(k))_{+}(x,t)\\
&\eqsim(|u(x,t)|+k)^{m-1}(u-k)_{+}(x,t)\\
&\geq\min\{|u(x,t)|^{m-1},|u(y,t)|^{m-1}\}((u-k)_{+}(x,t)-(u-k)_{+}(y,t)).
\end{align*}
Also, if $x,y\in L$, then without loss of generosity we assume $u(x)\geq u(y)$. Then we have
\begin{align*}
(\phi(u)-\phi(k))_{+}(x,t)-(\phi(u)-\phi(k))_{+}(y,t)&=\phi(u)(x,t)-\phi(u)(y,t)\\
&\eqsim|u(x,t)|^{m-1}(u(x,t)-u(y,t))\\
&=\min\{|u(x,t)|^{m-1},|u(y,t)|^{m-1}\}((u-k)_{+}(x,t)-(u-k)_{+}(y,t)).
\end{align*}
Then \cref{eq:less.1} is proved. We now estimate the integrand in $B$ as follows:
\begin{align*}
-(\phi(u(x,t))-\phi(u(y,t)))(u-k)_{+}(x,t) &\leq ((\phi(u(y,t)))_+-\phi(u(x,t))_+)(u-k)_{+}(x,t)\\
&\leq (\phi(u)-\phi(k))_{+}(y,t)(u-k)_{+}(x,t).
\end{align*}
Using the above estimate for $A$ and $B$ to \cref{eq:moleq3}, the energy estimate follows.
\end{proof}


\section{De Giorgi Lemmas}\label{sec:De}
For $R>0$, $T_1,T_2\in\mathbb{R}$ and $x_0\in\mathbb{R}^n$, let $Q = K_R(x_0)\times(T_1,T_2] \subset \Omega_T$. To incorporate the singularity/degeneracy of the equation \cref{eq:main-eq} in our analysis we need to work in a geometry intrinsic to the equation. We do this via working in cylinders which are scaled in space for the fast diffusion case and with cylinders scaled in time for the porous media case. The scaling factor will be chosen depending on $|u|$ - the quantity responsible for the  singularity/degeneracy. 

For a scaling factor $\theta>0$ we define
\begin{align*}
(x_0,t_0)+\mathfrak{Q}_{\rho}(\vartheta) = K_{\vartheta\rho}(x_0)\times(t_0-\rho^{2s},t_0]
\end{align*}
and
\begin{align*}
(x_0,t_0)+Q_{\rho}(\theta) = K_{\rho}(x_0)\times(t_0-\theta\rho^{2s},t_0]
\end{align*}
with $(x_0,t_0)\in\mathbb{R}^{n+1}$ and $\rho>0$. We omit the vertex $(x_0,t_0)$ when it is clear from the context.


With $(x_0,t_0)\in\mathbb{R}^{n+1}$ and $\rho,\theta>0$, we work with a cylinder
\begin{align*}
(x_0,t_0)+Q_{\rho}(\theta) \subset Q \subset \Omega_T
\end{align*}
and let $M>0$ be a number such that
\begin{align}\label{eq:2M.sup}
2M \geq \esup_{Q} |u|.
\end{align}
Before giving the main result, we prove the following lemma.
\begin{lem}\label{lem:est}
Let $m\geq 1$ and \cref{eq:2M.sup} hold. With either
\begin{align}\label{eq:uM}
\widetilde{u}:=\max\left\{u,M/4\right\}\quad\text{and a number}\quad k\in\left[M/2,2M\right]
\end{align}
or 
\begin{align}\label{eq:uM2}
\widetilde{u}:=\min\left\{u,-M/4\right\}\quad\text{and a number}\quad k\in\left[-2M,-M/2\right],
\end{align}
we have
\begin{align}\label{eq:est0}
\begin{split}
&M^{m-1}|(\widetilde{u}-k)_{\pm}(x,t)-(\widetilde{u}-k)_{\pm}(y,t)|^2\\
&\quad\leq 4^m((\phi(u)-\phi(k))_{\pm}(x,t)-(\phi(u)-\phi(k))_{\pm}(y,t))((u-k)_{\pm}(x,t)-(u-k)_{\pm}(y,t)).
\end{split}
\end{align}
\end{lem}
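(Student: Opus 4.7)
The plan is to reduce the second alternative to the first via the symmetry $u\mapsto -u$, $k\mapsto -k$: $\phi$ is odd, the $\pm$ truncations swap, and $\max\{u,M/4\}\mapsto -\min\{u,-M/4\}$, so the two alternatives are equivalent up to interchanging the $+$ and $-$ cases. I therefore focus on $\widetilde u = \max\{u,M/4\}$, $k\in[M/2,2M]$, and handle the $+$ and $-$ truncations separately, using throughout that $|u|\le 2M$, $k>M/4$, and the relation $(\phi(a)-\phi(b))(a-b)\eqsim (|a|+|b|)^{m-1}(a-b)^2$ from the preliminaries.

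For the $+$ case, the key simplification is that since $k\ge M/2 > M/4$ one has $\max\{u,M/4\}>k$ iff $u>k$, and on that set $\widetilde u = u$; hence $(\widetilde u -k)_+ \equiv (u-k)_+$ identically. The inequality thus reduces to a pointwise claim about $u$. Assuming WLOG $u(x)\ge u(y)$ and splitting according to the positions of $u(x),u(y)$ relative to $k$, each nontrivial subcase reduces to an integral bound $\int_{\alpha}^{\beta} m s^{m-1}\,ds \ge m (M/2)^{m-1}(\beta-\alpha)$ with $\alpha\ge k\ge M/2$, valid because $s^{m-1}$ is nondecreasing for $m\ge 1$; the constant $4^m$ is then easily met since $M^{m-1}\le 4^m m (M/4)^{m-1}$.

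For the $-$ case, a direct computation yields $(\widetilde u - k)_- = \min\{(u-k)_-,\,k-M/4\}$, the $1$-Lipschitz truncation of $(u-k)_-$ at height $k-M/4$. Taking WLOG $u(x)\le u(y)$, I split on whether each value exceeds $k$ and $M/4$. The most delicate subcase is $u(x)<k\le u(y)$ with $u(x)<M/4$: here the LHS equals $M^{m-1}(k-M/4)^2$, and since $u(x)\le M/4$ we may interpolate $(k-M/4)^2\le (k-M/4)(k-u(x))$, so it suffices to show $M^{m-1}(k-M/4)\le 4^m(\phi(k)-\phi(u(x)))$; this is clean because $u(x)\le k/2$ gives $\phi(k)-\phi(u(x))\ge k^m(1-2^{-m})\ge k^m/2$ when $u(x)\ge 0$, while $u(x)<0$ gives $\phi(k)-\phi(u(x))\ge k^m$, and in both cases $k\ge M/2$ absorbs the constant. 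All remaining subcases (both $\ge k$, both $<k$ with various positions relative to $M/4$) are handled by analogous integral estimates on $[\widetilde u(x),\widetilde u(y)]$, which match the LHS because $\widetilde u\ge M/4$ uniformly.

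The main technical obstacle is securing the universal constant $4^m$ in those subcases where the cap at $M/4$ creates a mismatch between $(\widetilde u -k)_-(x)$ (bounded by $k-M/4$) and $(u-k)_-(x)$ (possibly as large as $k+2M$). The factorization trick $(k-M/4)^2\le (k-M/4)(k-u(x))$, valid since $u(x)\le M/4$, resolves this: it lets one factor of the LHS pair with the $(u-k)_-$ difference on the RHS, reducing the problem to bounding $M^{m-1}(k-M/4)$ by $4^m(\phi(k)-\phi(u(x)))$, a straightforward estimate thanks to the confinement $k\in[M/2,2M]$.
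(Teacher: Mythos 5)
Your proof is correct, and it organizes the argument more transparently than the paper's. The paper's proof (for the $-$ case with $\widetilde{u}=\max\{u,M/4\}$) proceeds by a three-way split on where $u(x),u(y)$ lie relative to $M/4$, then inside Case 2 proves an auxiliary claim by a further three-way split relative to $k$, and inside Case 3 another two-way split; each branch is verified by direct algebra, often using the preliminary relation $(\phi(a)-\phi(b))(a-b)\eqsim(|a|+|b|)^{m-1}(a-b)^2$. You instead isolate two structural identities up front — $(\widetilde u - k)_+ \equiv (u-k)_+$ (trivializing the $+$ case, which the paper does not comment on at all) and $(\widetilde u - k)_- = \min\{(u-k)_-,\,k-M/4\}$ — and then resolve the only genuinely delicate mismatch (the capped truncation paired against the uncapped one) by the factorization $(k-M/4)^2\le(k-M/4)(k-u(x))$, reducing everything to pointwise bounds of the form $M^{m-1}(\beta-\alpha)\le 4^m(\phi(\beta)-\phi(\alpha))$ with $\alpha\ge M/4$, which follows from monotonicity of $s\mapsto s^{m-1}$ for $m\ge1$ since $4^m m(M/4)^{m-1}=4m\,M^{m-1}$. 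The paper's Case 3, Subcase 1 contains essentially the same estimate in the algebraic form $4^mk^{m-1}(k-\tfrac M4)^2=4^m(\phi(k)-k^{m-1}\tfrac M4)(k-\tfrac M4)$, so the two proofs converge there, but your framing makes it clear \emph{why} it works and isolates where the cap at $M/4$ actually matters. You also make explicit the reduction of the second alternative (\cref{eq:uM2}) to the first via the sign symmetry, which the paper only mentions in passing. One small caveat: when you argue that ``$k\ge M/2$ absorbs the constant'' in the delicate subcase, the reader needs to compare $k-M/4\le k$ (not $k-M/4\le 2M$) against $4^m k^m/2$; with the sharper bound the inequality $M^{m-1}k\le 2^{2m-1}k^m$ reduces to $M^{m-1}\le 2^m k^{m-1}$ via $k\ge M/2$, which indeed holds for all $m\ge 1$. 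Worth spelling out, since the cruder comparison only closes for $m\ge 2$.
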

\begin{proof}
We only prove that
\begin{align}\label{eq:est}
\begin{split}
&M^{m-1}|(\widetilde{u}-k)_{-}(x,t)-(\widetilde{u}-k)_{-}(y,t)|^2\\
&\quad\leq 4^m((\phi(u)-\phi(k))_{-}(x,t)-(\phi(u)-\phi(k))_{-}(y,t))((u-k)_{-}(x,t)-(u-k)_{-}(y,t))
\end{split}
\end{align}
in case of \cref{eq:uM}, since the other three cases are proved similarly. By the symmetry of both sides of \cref{eq:est} and the monotonicity of $\phi$, we can always assume
\begin{align*}
u(y,t)\geq u(x,t).
\end{align*}
Now we divide the proof into three cases.

\textbf{Case 1: When $u(x,t),u(y,t)\leq\frac{M}{4}$. } In this case, $\widetilde{u}(x,t)=\widetilde{u}(y,t)\equiv\frac{M}{4}$. Then we have
\begin{align*}
|(\widetilde{u}-k)_-(x,t)-(\widetilde{u}-k)_-(y,t)|\equiv 0
\end{align*}
so that the left-hand side of \cref{eq:est} is zero. Also, by \cref{eq:less} in Remark \ref{rem:positive}, the right-hand side of \cref{eq:est} is nonnegative, thus \cref{eq:est} is true.

\textbf{Case 2: When $u(x,t),u(y,t)>\frac{M}{4}$. } In this case, we guarantee $u(x,t),u(y,t)>0$. Since $\frac{M}{4}<u(x,t)\leq 2M$, $\widetilde{u}(x,t)\equiv u(x,t)$, and $\widetilde{u}(y,t)\equiv u(y,t)$, we have
\begin{align}\label{eq:eqsim}
M^{m-1}|(\widetilde{u}-k)_-(x,t)-(\widetilde{u}-k)_-(y,t)|^2\eqsim |u(x,t)|^{m-1}((u-k)_-(x,t)-(u-k)_-(y,t))^2
\end{align}
with the implicit constant $c=4^m$. Now we claim the following:
\begin{align}\label{eq:claim}
|u(x,t)|^{m-1}|(u-k)_-(x,t)-(u-k)_-(y,t)|\leq (\phi(u)-\phi(k))_{-}(x,t)-(\phi(u)-\phi(k))_{-}(y,t).
\end{align}
To show the above claim, we divide the case into three subcases.
\begin{itemize}
\item[] \textbf{Subcase 1: When $u(x,t),u(y,t)>k$. } In this case, both sides of \cref{eq:claim} is zero, so \cref{eq:claim} is trivially true.
\item[] \textbf{Subcase 2: When $u(y,t)>k$ and $u(x,t)\leq k$. } Note that $|u(x,t)|\leq k$ from $u(x,t)>0$. Then together with $(u-k)_-(y,t)\equiv(\phi(u)-\phi(k))_-(y,t)\equiv 0$ and $m\geq 1$, we estimate
\begin{align*}
|u(x,t)|^{m-1}|(u-k)_-(x,t)-(u-k)_-(y,t)|&=|u(x,t)|^{m-1}(k-u(x,t))\\
&=k|u(x,t)|^{m-1}-\phi(u)(x,t)\\
&\leq \phi(k)-\phi(u)(x,t)\\
&=(\phi(u)-\phi(k))_-(x,t)-(\phi(u)-\phi(k))_-(y,t).
\end{align*}
\item[] \textbf{Subcase 3: When $u(x,t),u(y,t)\leq k$. } In this case, recalling $u(y,t)\geq u(x,t)\geq 0$ and $m\geq 1$, we have
\begin{align*}
|u(x,t)|^{m-1}|(u-k)_-(x,t)-(u-k)_-(y,t)|&=|u(x,t)|^{m-1}(u(y,t)-u(x,t))\\
&\leq\phi(u)(y,t)-\phi(u)(x,t)\\
&=(\phi(k)-\phi(u)(x,t))-(\phi(k)-\phi(u)(y,t))\\
&=(\phi(u)-\phi(k))_-(x,t)-(\phi(u)-\phi(k))_-(y,t).
\end{align*}
\end{itemize}
Considering \textbf{Subcase 1}--\textbf{Subcase 3}, we obtain \cref{eq:claim}. Now with \cref{eq:eqsim}, we get
\begin{align*}
M^{m-1}|(\widetilde{u}-k)_-(x,t)-(\widetilde{u}-k)_-(y,t)|^2&\leq 4^m |u(x,t)|^{m-1}|(u-k)_-(x,t)-(u-k)_-(y,t)|^2\\
&\leq 4^m((\phi(u)-\phi(k))_{-}(x,t)-(\phi(u)-\phi(k))_{-}(y,t))|(u-k)_-(x,t)-(u-k)_-(y,t)|\\
&=4^m((\phi(u)-\phi(k))_{-}(x,t)-(\phi(u)-\phi(k))_{-}(y,t))((u-k)_-(x,t)-(u-k)_-(y,t)),
\end{align*}
where for the last equality we have used $(u-k)_-(x,t)-(u-k)_-(y,t)\geq 0$ since $u(x,t)\leq u(y,t)$.

\textbf{Case 3: When $u(x,t)\leq\frac{M}{4}$ and $u(y,t)>\frac{M}{4}$. } In this case, note that $u(x,t)<0$ is possible. First, observe that
\begin{align*}
u(y,t)\equiv\widetilde{u}(y,t)\quad\text{and}\quad \widetilde{u}(x,t)=\tfrac{M}{4}
\end{align*}
and so
\begin{align*}
M^{m-1}|(\widetilde{u}-k)_-(x,t)-(\widetilde{u}-k)_-(y,t)|^2=M^{m-1}|(k-\tfrac{M}{4})-(u-k)_-(y,t)|^2.
\end{align*}
Now we consider two subcases.
\begin{itemize}
\item[] \textbf{Subcase 1: When $u(y,t)\geq k$. } In this case, since $(u-k)_-(y,t)\equiv 0$ and $k\geq \frac{M}{4}\geq u(x,t)$, together with $m\geq 1$ we obtain
\begin{align*}
M^{m-1}|(k-\tfrac{M}{4})-(u-k)_-(y,t)|^2&\leq 4^m k^{m-1}(k-\tfrac{M}{4})(k-\tfrac{M}{4})\\
&=4^m (\phi(k)-k^{m-1}\tfrac{M}{4})(k-\tfrac{M}{4})\\
&\leq 4^m (\phi(k)-\phi(u)(x,t))(k-u(x,t))\\
&=4^m ((\phi(u)-\phi(k))_-(x,t)-(\phi(u)-\phi(k))_-(y,t))((u-k)_-(x,t)-(u-k)_-(y,t)).
\end{align*}

\item[] \textbf{Subcase 2: When $\frac{M}{4}<u(y,t)<k$. } Then together with $u(x,t)\leq\frac{M}{4}$ and $m\geq 1$, we estimate
\begin{align*}
M^{m-1}|(k-\tfrac{M}{4})-(u-k)_-(y,t)|^2&=4^{m-1}(\tfrac{M}{4})^{m-1}(u(y,t)-\tfrac{M}{4})^2\\
&=4^{m-1}\left((\tfrac{M}{4})^{m-1}u(y,t)-(\tfrac{M}{4})^{m-1}\tfrac{M}{4}\right)(u(y,t)-\tfrac{M}{4})\\
&\leq 4^{m-1}\left(\phi(u)(y,t)-\phi(u)(x,t)\right)(u(y,t)-u(x,t))\\
&=4^{m-1}((\phi(u)-\phi(k))_-(x,t)-(\phi(u)-\phi(k))_-(y,t))((u-k)_-(x,t)-(u-k)_-(y,t)).
\end{align*}
\end{itemize}
Considering \textbf{Subcase 1} and \textbf{Subcase 2}, we obtain \cref{eq:est} in \textbf{Case 3}. Therefore, combining \textbf{Case 1}--\textbf{Case 3}, we get \cref{eq:est} in any case.
\end{proof}

We now prove two variants of De Giorgi's lemma. 

\begin{lem}\label{lem:deG-general}
For $m\geq 1$, let $u$ be a local weak super(sub)-solution in $\Omega_T$ in the sense of Definition \ref{def:weak}. There exists a constant $0<\nu<1$ depending only in $M,\theta$ and the \textbf{data} such that if
\begin{align*}
|\{\mp u \leq M\} \cap (x_0,t_0)+Q_{\rho}(\theta)| \leq \nu|Q_{\rho}(\theta)|\quad\text{and}\quad \left(\frac{\rho}{R}\right)^{\frac{2s}{m}}\tl(u_{\mp},Q) \leq {M}
\end{align*}		
hold, then we have
\begin{align*}
\mp u \geq \frac{1}{2}M \quad \mbox{ a.e. in } \quad (x_0,t_0)+Q_{\tfrac{\rho}{2}}(\theta). 
\end{align*}
Furthermore,
\begin{align*}
\nu = C \theta^{-1} M^{1-m} \left(1+\theta^{-1} M^{1-m}\right)^{-\frac{n+2s}{2s}},
\end{align*}
where $C>0$ is a constant that depends only on the \textbf{data}.
\end{lem}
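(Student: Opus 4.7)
The plan is to execute a De Giorgi iteration adapted to the intrinsic scaling of $Q_\rho(\theta)$; I focus on the super-solution case with lower-bound conclusion, the other following by the same argument applied to $-u$. Introduce the decreasing levels $k_j:=M/2+M/2^{j+1}$, shrinking radii $\rho_j:=\rho/2+\rho/2^{j+1}$, an intermediate scale $\widetilde\rho_j:=(\rho_j+\rho_{j+1})/2$, cylinders $Q_j:=K_{\rho_j}(x_0)\times(t_0-\theta\rho_j^{2s},t_0]$ and $\widetilde Q_j$ defined analogously, together with smooth cutoffs $\xi_j$ satisfying $\xi_j\equiv 1$ on $Q_{j+1}$, $\mathrm{supp}\,\xi_j\subset\widetilde Q_j$, $\xi_j(\cdot,t_0-\theta\widetilde\rho_j^{2s})=0$, $|\nabla\xi_j|\leq C 2^j/\rho$, and $|\de_t\xi_j|\leq C 4^{js}/(\theta\rho^{2s})$. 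With $A_j:=\{u<k_j\}\cap Q_j$ and $Y_j:=|A_j|/|Q_j|$, the goal is $Y_j\to 0$, which yields $u\geq M/2$ a.e. on $Q_{\rho/2}(\theta)$.

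I apply the negative-truncation version of Lemma \ref{lem:ee} at level $k_j$ on $\widetilde Q_j$ with cutoff $\xi_j$, discarding the nonnegative ``isoperimetric'' term on the left. Since $k_j\in[M/2,2M]$, Lemma \ref{lem:est} with $\widetilde u:=\max\{u,M/4\}$ converts the degenerate $\phi$-integrand into the linearized form $M^{m-1}|(\widetilde u-k_j)_-(x)-(\widetilde u-k_j)_-(y)|^2$ on the left. On the right, using $|u|\leq 2M$ on $Q$ one bounds the gradient and time-derivative contributions by $CM^{m+1}4^{js}/\rho^{2s}|A_j|$ and $CM^2\cdot 4^{js}/(\theta\rho^{2s})|A_j|$ respectively. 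For the tail term, the split $\mathbb{R}^n\setminus K_{\widetilde\rho_j}=(K_R\setminus K_{\widetilde\rho_j})\cup(\mathbb{R}^n\setminus K_R)$ handles the near part by $|u|\leq 2M$ and the far part by the hypothesis $(\rho/R)^{2s/m}\tl(u_-,Q)\leq M$, together giving a tail contribution of order $CM^{m+1}4^{js}/\rho^{2s}|A_j|$. Altogether,
\begin{align*}
\tfrac{M^{m-1}}{4^m}G_j+\esup_{t}\int\xi_j^2(u-k_j)_-^2\,dx\leq C\,\frac{M^{m+1}}{\rho^{2s}}(1+\theta^{-1}M^{1-m})\,4^{js}|A_j|,
\end{align*}
where $G_j$ denotes the $\min\{\xi_j^2\}$-weighted Gagliardo seminorm of $(\widetilde u-k_j)_-$.

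Set $v_j:=(\widetilde u-k_j)_-\xi_j$; since $k_j>M/4$, $(\widetilde u-k_j)_-$ vanishes outside $\{u<k_j\}$, and $(\widetilde u-k_j)_-\leq(u-k_j)_-$ pointwise. A standard two-cutoff argument (bridging $\min\{\xi_j^2\}$ and $\max\{\xi_j^2\}$ at the cost of a universal constant) controls $[v_j]_{W^{s,2}}^2$ and $\esup\int v_j^2$ in terms of the two left-hand pieces of the energy estimate. Applying the parabolic fractional Sobolev embedding \cref{eq:SP1} to $v_j$, using $v_j\geq M/2^{j+2}$ on $A_{j+1}\cap Q_{j+1}$, and tracking the exponents of $M$, $\theta$, and $\rho$---with the $\rho$-powers cancelling exactly---I arrive at
\begin{align*}
Y_{j+1}\leq C\,b^{\,j}\,(\theta^{-1}M^{1-m})^{-2s/n}\,(1+\theta^{-1}M^{1-m})^{1+2s/n}\,Y_j^{1+2s/n}
\end{align*}
for some universal $b>1$. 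Lemma \ref{lem:tech} with $\alpha=2s/n$ then forces $Y_j\to 0$ whenever $Y_0\leq\nu$ with $\nu=C\theta^{-1}M^{1-m}(1+\theta^{-1}M^{1-m})^{-(n+2s)/(2s)}$, matching the claim. The main technical hurdle is arranging the tail split so that the hypothesis $(\rho/R)^{2s/m}\tl(u_-,Q)\leq M$ precisely compensates the mismatch between $R$ and $\rho$; the rest is algebraic bookkeeping so that the intrinsic factor $\eta:=\theta^{-1}M^{1-m}$ emerges cleanly in the iteration constant and is then raised to the power $n/(2s)$ upon inverting Lemma \ref{lem:tech}.
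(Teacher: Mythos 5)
Your proposal is correct and follows essentially the same route as the paper: De Giorgi iteration on intrinsically scaled cylinders with levels $k_j\in[M/2,M]$, Lemma \ref{lem:ee} as the energy estimate, Lemma \ref{lem:est} to linearize the degenerate Gagliardo term, the near/far tail split using the hypothesis on $\tl(u_-,Q)$, the Sobolev embedding \cref{eq:SP1}, and Lemma \ref{lem:tech} to close the iteration. The only calculational variation is that you apply \cref{eq:SP1} directly to $(M/2^{j+2})^{2(n+2s)/n}|A_{j+1}|\leq\iint|v_j|^{2(n+2s)/n}$ (yielding iteration exponent $2s/n$ and an iteration constant $\eta^{-2s/n}(1+\eta)^{1+2s/n}$, $\eta:=\theta^{-1}M^{1-m}$), whereas the paper interposes a H\"older step to bound $\iint v_j^2$ first, giving exponent $\tfrac{2s}{n+2s}$ and constant $\eta^{-\frac{2s}{n+2s}}(1+\eta)$; both produce the identical threshold $\nu = C\,\eta\,(1+\eta)^{-\frac{n+2s}{2s}}$ once fed into Lemma \ref{lem:tech}, so the discrepancy is purely a matter of when the H\"older inequality is applied.
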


\begin{proof}
Without loss of generality we assume that $(x_0,t_0) = (0,0)$ and work with super-solutions; the case for sub-solutions being analogous. The energy estimate Lemma \ref{lem:ee} becomes:
\begin{align*}
	&\esup_{t \in (-\theta\rho^{2s},0)}\int_{K_{\rho}}\xi^2(u-k)_{-}^2(x,t)\,dx \\
	&+ \int^{0}_{-\theta\rho^{2s}}\int_{K_{\rho}}\int_{K_{\rho}} \min\{\xi^2(x,t),\xi^2(y,t)\}\frac{((\phi(u)-\phi(k))_{-}(x,t)-(\phi(u)-\phi(k))_{-}(y,t))((u-k)_{-}(x,t)-(u-k)_{-}(y,t))}{|x-y|^{n+2s}}\,dx\,dy\,dt \\
	&\leq C\int^{0}_{-\theta\rho^{2s}}\int_{K_{\rho}}\int_{K_{\rho}}\max\{|u|^{m-1}(x,t),|u|^{m-1}(y,t)\}\max\{(u-k)_{-}^{2}(x,t),(u-k)_{-}^{2}(y,t)\}\frac{|\xi(x,t)-\xi(y,t)|^2}{|x-y|^{n+2s}}\,dx\,dy\,dt\\
	&\quad+C\int^{0}_{-\theta\rho^{2s}}\int_{K_{\rho}}(u-k)_{-}^{2}|\de_t \xi^2|\,dx\,dt+\int_{K_{\rho}}\xi^2(u-k)_{-}^2(x,-\theta\rho^{2s})\,dx\\
	&\quad+C\int^{0}_{-\theta\rho^{2s}}\int_{K_{\rho}}\xi^2(u-k)_{-}(x,t)\,dx\,dt\left(\underset{x\in \mbox{supp}(\xi(\cdot,t)); t\in (-\theta\rho^{2s},0)}{\esup}\int_{\bb{R}^n\setminus K_{\rho}}\frac{(\phi(u)-\phi(k))_{-}(y,t)}{|x-y|^{n+2s}}\,dy\right).
\end{align*}	
For $i = 0, 1, \ldots$ we set
\begin{align*}
k_i = \frac{M}{2}+\frac{M}{2^{i+1}}, \quad \rho_i = \rho+\frac{\rho}{2^i}, \quad K_i = K_{\rho_i}, \quad Q_i = K_i\times (-\theta\rho_i^{2s},0],	
\end{align*}
and
\begin{align*}
\widetilde{k}_i = \frac{k_i+k_{i+1}}{2}, \quad \widetilde{\rho}_i = \frac{\rho_i+\rho_{i+1}}{2}, \quad \widetilde{K}_i = K_{\widetilde{\rho}_i}, \quad \widetilde{Q}_i = \widetilde{K}_i\times (-\theta\widetilde{\rho}_i^{2s},0].
\end{align*}
For the cutoff function in the energy estimate, we work with a cut-off function $0\leq \xi \leq 1$ such that
\begin{align*}
|\nabla \xi| \leq C\frac{2^i}{\rho}, \quad  |\de_t \xi| \leq C\frac{4^{is}}{\theta\rho^{2s}} \quad \mbox{ and } \quad \xi = 1 \mbox{ on } \widetilde{Q}_{i}
\end{align*}
with the support of $\xi$ chosen to be away from $\de_P Q_i$ such that for any $S\in \text{supp}(\xi)$ and $P \in \bb{R}^n\setminus Q_i$ we have
\begin{align}\label{eq:SPP}
\frac{|S-P|}{|P|} \geq \frac{C}{2^i}.
\end{align}
We set 
{\begin{align*}
A_i = {\{u<k_i\} \cap Q_i}.
\end{align*}}
The choice of the support of $\xi$ ensures that
\begin{align*}
\int_{K_i}\xi^2(u-k_i)_{-}^2(x,-\theta\rho_i^{2s})\,dx = 0
\end{align*}
and
\begin{align*}
\underset{x\in \mbox{supp}(\xi(\cdot,t)); t\in (-\theta\rho_{i}^{2s},0)}{\esup}\int_{\bb{R}^n\setminus K_i}\frac{(\phi(u)-\phi(k_i))_{-}(y,t)}{|x-y|^{n+2s}}\,dy \leq C2^{i(n+2s)}\underset{t\in (-\theta\rho_{i}^{2s},0)}{\esup}\int_{\bb{R}^n\setminus K_i}\frac{(\phi(u)-\phi(k_i))_{-}(y,t)}{|y|^{n+2s}}\,dy.
\end{align*}
In particular, we can estimate the nonlocal term as follows:
\begin{align*}
	&\int_{-\theta\rho_{i}^{2s}}^0\int_{K_i}\xi^2(u-k_i)_{-}(x,t)\,dx\,dt\left(\underset{x\in \mbox{supp}(\xi(\cdot,t)); t\in (-\theta\rho_{i}^{2s},0)}{\esup}\int_{\bb{R}^n\setminus K_i}\frac{(\phi(u)-\phi(k_i))_{-}(y,t)}{|x-y|^{n+2s}}\,dy\right)\\
	&\leq C2^{i(n+2s)}\int_{-\theta\rho_{i}^{2s}}^0\int_{K_i}(u-k_i)_{-}(x,t)\,dx\,dt\left(\underset{t\in (-\theta\rho_{i}^{2s},0)}{\esup}\int_{\bb{R}^n\setminus K_i}\frac{(\phi(u)-\phi(k_i))_{-}(y,t)}{|y|^{n+2s}}\,dy\right)\\
	&\leq C2^{i(n+2s)}|A_i|M\left(\frac{CM^m}{\rho^{2s}}+\underset{t\in (-\theta\rho_{i}^{2s},0)}{\esup}\int_{\bb{R}^n\setminus K_R}\frac{(\phi(u)-\phi(k_i))_{-}(y,t)}{|y|^{n+2s}}\,dy\right)\\
	&\leq C2^{i(n+2s)}|A_i|M\left(\frac{M^{m}}{\rho^{2s}}+\left(\frac{\rho}{R}\right)^{2s}\frac{M^{m}}{\rho^{2s}}+\left(\frac{\rho}{R}\right)^{2s}\frac{\tl^m(u_{-},Q)}{\rho^{2s}}\right),
\end{align*}
where we were able to replace the integral over $\bb{R}^n\setminus K_i$ with $\bb{R}^n\setminus K_R$ because of the choice of $M$ and the by the definition of $k_i$. Therefore with the assumption
\begin{align*}
\left(\frac{\rho}{R}\right)^{\frac{2s}{m}}\tl(u_{-},Q) \leq M
\end{align*}
and the choice of cylinders, we get
\begin{align*}
\int_{-\theta\rho_{i}^{2s}}^0\int_{K_i}\xi^2(u-k_i)_{-}(x,t)\,dx\,dt\left(\underset{x\in \mbox{supp}(\xi(\cdot,t)); t\in (-\theta\rho_{i}^{2s},0)}{\esup}\int_{\bb{R}^n\setminus K_i}\frac{(\phi(u)-\phi(k_i))_{-}(y,t)}{|x-y|^{n+2s}}\,dy\right) \leq C2^{i(n+2s)}|A_i|\frac{M^{m+1}}{\rho^{2s}}.
\end{align*}
Recalling that $2M \geq |u|$ locally and the properties of the test function, we estimate the remaining terms on the right hand side as follows:
\begin{align*}
	&\int^{0}_{-\theta\rho^{2s}_i}\int_{K_{i}}\int_{K_{i}}\max\{|u|^{m-1}(x,t),|u|^{m-1}(y,t)\}\max\{(u-k_i)_{-}^{2}(x,t),(u-k_i)_{-}^{2}(y,t)\}\frac{|\xi(x,t)-\xi(y,t)|^2}{|x-y|^{n+2s}}\,dx\,dy\,dt\\
	&\quad\leq C\frac{4^{i}}{\rho^{2s}}M^{m+1}|A_i|	
\end{align*}
and
\begin{align*}
\int_{-\theta\rho_i^{2s}}^0\int_{K_i}(u-k_i)_{-}^{2}|\de_t \xi^2|\,dx\,dt \leq C\frac{4^{is}}{\theta\rho^{2s}}M^{2}|A_i|.
\end{align*}

For the left land side of the energy estimate we estimate from below over the smaller region $\widetilde{K}_{\rho}$ in space and $(-\theta\widetilde{\rho}^{2s},0)$ in time where the test functions are identically unity. Before that, we consider $\widetilde{u}$ defined in \cref{eq:uM} of Lemma \ref{lem:est}. Then by Lemma \ref{lem:est} we have
\begin{align*}
&M^{m-1}\int_{-\theta\widetilde{\rho}_i^{2s}}^0\int_{\widetilde{K}_i}\int_{\widetilde{K}_i}\frac{|(\widetilde{u}-\widetilde{k}_i)_{-}(x,t)-(\widetilde{u}-\widetilde{k}_i)_{-}(y,t)|^2}{|x-y|^{n+2s}}\,dx\,dy\,dt\\
&\leq C\int^{0}_{-\theta\rho_{i}^{2s}}\int_{K_{i}}\int_{K_{i}}\frac{((\phi(u)-\phi(\widetilde{k}_i))_{-}(x,t)-(\phi(u)-\phi(\widetilde{k}_i))_{-}(y,t))((u-\widetilde{k}_i)_-(x,t)-(u-\widetilde{k}_i)_-(y,t))}{|x-y|^{n+2s}}\,dx\,dy\,dt 
\end{align*}
using that $|u| \leq 2M$, $\widetilde{k}_i\in[\tfrac{M}{2},M]$ and $m-1\geq 0$. Putting together the above estimates yields
\begin{align}\label{eq:caccM}
\begin{split}
&\esup_{t \in (-\theta\widetilde{\rho}_i^{2s},0)}\int_{\widetilde{K}_i}(\widetilde{u}-\widetilde{k}_i)_{-}^2\,dx+ M^{m-1}\int_{-\theta\widetilde{\rho}_i^{2s}}^0\int_{\widetilde{K}_i}\int_{\widetilde{K}_i}\frac{|(\widetilde{u}-\widetilde{k}_i)_{-}(x,t)-(\widetilde{u}-\widetilde{k}_i)_{-}(y,t)|^2}{|x-y|^{n+2s}}\,dx\,dy\,dt \\
&\leq C\left(\frac{4^{i}}{\rho^{2s}}M^{m+1}+\frac{4^{is}}{\theta\rho^{2s}}M^{2}+\frac{2^{i(n+4)}}{\rho^{2s}}M^{m+1}\right)|A_i|.
\end{split}
\end{align}

Let $0\leq \psi \leq 1$ be a cutoff function such that $\psi\equiv 0$ on $\partial_P \widetilde{Q}_i$ and $\psi = 1$ on $Q_{i+1}$ with $|\nabla\psi| \leq 2^i\rho^{-1}$.  We apply H\"older's inequality and Sobolev embedding \cref{eq:SP1} to get
\begin{align*}
	\left(\frac{M}{2^{i+3}}\right)^{2}|{A}_{i+1}|&\leq \iint_{\widetilde{Q}_i}(\widetilde{u}-\widetilde{k}_i)_{-}^2\psi^2 \,dx\,dt \\
	&\leq\left(\iint_{\widetilde{Q}_i}((\widetilde{u}-\widetilde{k}_i)\psi)_{-}^{2\frac{n+2s}{n}}\,dx\,dt\right)^{\frac{n}{n+2s}}|A_i|^{\frac{2s}{n+2s}}\\
	&\leq \left(\int_{-\theta\widetilde{\rho}_i^{2s}}^0\int_{\widetilde{K}_i}\int_{\widetilde{K}_i} \frac{|((\widetilde{u}-\widetilde{k}_i)\psi)_{-}(x,t)-((\widetilde{u}-\widetilde{k}_i)\psi)_{-}(y,t)|^2}{|x-y|^{n+2s}} \,dx\,dy\,dt \right)^{\frac{n}{n+2s}}\\
	&\quad\quad\times \left(\esup_{t\in (-\theta\widetilde{\rho}_i^{2s},0)}\int_{\widetilde{K}_i} |((\widetilde{u}-\widetilde{k}_i)\psi)_{-}(x,t)|^2\,dx\right)^{\frac{2s}{n+2s}}{|{A}_i|^{\frac{2s}{n+2s}}}.
\end{align*}
Using $|\nabla \xi|\leq C2^i/\rho$ and the triangle inequality, we have
\begin{align*}
&\int_{-\theta\widetilde{\rho}_i^{2s}}^0\int_{\widetilde{K}_i}\int_{\widetilde{K}_i} \frac{|((\widetilde{u}-\widetilde{k}_i)\psi)_{-}(x,t)-((\widetilde{u}-\widetilde{k}_i)\psi)_{-}(y,t)|^2}{|x-y|^{n+2s}} \,dx\,dy\,dt\\
&\quad\leq \int_{-\theta\widetilde{\rho}_i^{2s}}^0\int_{\widetilde{K}_i}\int_{\widetilde{K}_i} \frac{|(\widetilde{u}-\widetilde{k}_i)_{-}(x,t)-(\widetilde{u}-\widetilde{k}_i)_{-}(y,t)|^2}{|x-y|^{n+2s}} \,dx\,dy\,dt+\dfrac{4^{i}}{\rho^{2s}}\int_{-\theta\widetilde{\rho}_i^{2s}}^0\int_{\widetilde{K}_i}(\widetilde{u}-\widetilde{k}_i)_{-}^{2}(x,t)\,dx\,dt.
\end{align*}
From \cref{eq:caccM} we get
\begin{align*}
	\int_{-\theta\widetilde{\rho}_i^{2s}}^0\int_{\widetilde{K}_i}\int_{\widetilde{K}_i} \frac{|(\widetilde{u}-\widetilde{k}_i)_{-}(x,t)-(\widetilde{u}-\widetilde{k}_i)_{-}(y,t)|^2}{|x-y|^{n+2s}} \,dx\,dy\,dt \leq C\left(\frac{4^{i}}{\rho^{2s}}M^{2}+\frac{4^{is}}{\theta\rho^{2s}}M^{3-m}+\frac{2^{i(n+4)}}{\rho^{2s}}M^{2}\right)|A_i|.
\end{align*}
We also have
\begin{align*}
	\frac{4^i}{\rho^{2s}}\iint_{\widetilde{Q}_i}(\widetilde{u}-\widetilde{k}_i)_{-}^2\,dx\,dt&\leq C \frac{4^i}{\rho^{2s}}M^2|A_i|.
\end{align*}
For the time term, using $0 \leq \psi \leq 1$, we get from above that
\begin{align*}
	\esup_{t\in (-\theta\widetilde{\rho}_i^{2s},0)}\int_{\widetilde{K}_i} |((\widetilde{u}-\widetilde{k}_i)\psi)_{-}(x,t)|^2\,dx\leq C\left(\frac{4^{i}}{\rho^{2s}}M^{m+1}+\frac{4^{is}}{\theta\rho^{2s}}M^{2}+\frac{2^{i(n+4)}}{\rho^{2s}}M^{m+1}\right)|A_i|.
\end{align*}
Setting
\begin{align}\label{eq:De1}
Y_i = \frac{|A_i|}{|Q_i|}
\end{align}
and simplifying we get
\begin{align}\label{eq:De2}
Y_{i+1} \leq Cb^i\left(\frac{M^{1-m}}{\theta}\right)^{-\frac{2s}{n+2s}}\left(1+\frac{M^{1-m}}{\theta}\right)Y_i^{1+\frac{2s}{n+2s}}.
\end{align}
for some $b>1$. Therefore, by Lemma \ref{lem:tech}, $Y_i \rightarrow 0$ provided
\begin{align}\label{eq:De3}
Y_0 \leq C \left(\frac{M^{1-m}}{\theta}\right) \left(1+\frac{M^{1-m}}{\theta}\right)^{-\frac{n+2s}{2s}}=:\nu.
\end{align}
The conclusion follows.  
\end{proof}

\begin{lem}\label{lem:deG-general'}
For $0<m\leq 1$, let $u$ be a local weak super(sub)-solution in $\Omega_T$ in the sense of Definition \ref{def:weak}. There exists a constant $0<\nu'<1$ depending only in $M,\vartheta$ and the \textbf{data} such that if
\begin{align*}
|\{\mp u \leq M\} \cap (x_0,t_0)+\mathfrak{Q}_{\rho}(\vartheta)| \leq \nu'|\mathfrak{Q}_{\rho}(\vartheta)|
\end{align*}		
and
\begin{align*}
\left(\frac{\vartheta\rho}{R}\right)^{\frac{2s}{m}}\tl(u_{\mp},Q) \leq {M}
\end{align*}
hold, then we have
\begin{align*}
\mp u \geq \frac{1}{2}M \quad \mbox{ a.e. in } \quad (x_0,t_0)+\mathfrak{Q}_{\tfrac{\rho}{2}}(\vartheta). 
\end{align*}
Furthermore,
\begin{align*}
\nu' = C \vartheta^{2s}M^{1-m} \left(1+\vartheta^{2s}M^{1-m}\right)^{-\frac{n+2s}{2s}},
\end{align*}
where $C>0$ is a constant that depends only on the \textbf{data}.
\end{lem}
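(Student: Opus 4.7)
The plan is to mirror the De Giorgi iteration of Lemma \ref{lem:deG-general} on the space-scaled cylinders $\mathfrak{Q}_\rho(\vartheta)=K_{\vartheta\rho}\times(-\rho^{2s},0]$, replacing the energy estimate Lemma \ref{lem:ee} (which needs $m\geq 1$) by Lemma \ref{lem:ee-sing-2}, which is available for all $m\in(0,1]$ at every level $k\in\bb{R}$ and thereby side-steps the fast-diffusion blow-up of $\max\{|u|^{m-1}\}$ that spoils Lemma \ref{lem:ee.m<1}. By the oddness of $\phi$ one has $u\mapsto -u$ switching sub- and super-solutions, so it suffices to treat super-solutions at $(x_0,t_0)=(0,0)$. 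I would set up the De Giorgi sequences
\begin{align*}
k_i=-\tfrac{M}{2}-\tfrac{M}{2^{i+1}},\quad \rho_i=\rho+\tfrac{\rho}{2^i},\quad Q_i=K_{\vartheta\rho_i}\times(-\rho_i^{2s},0],\quad A_i=\{u<k_i\}\cap Q_i,\quad Y_i=\tfrac{|A_i|}{|Q_i|},
\end{align*}
with cutoffs $\xi$ vanishing on $\de_P Q_i$, equal to one on an intermediate $\widetilde Q_i$, with $|\nabla\xi|\leq C2^i/(\vartheta\rho)$ and $|\de_t\xi^2|\leq C4^{is}/\rho^{2s}$; the support is arranged so that the initial term vanishes and the nonlocal tail over $\bb{R}^n\setminus K_i$ transfers to a tail over $\bb{R}^n\setminus K_R$ at the cost of a factor $2^{i(n+2s)}$, exactly as in Lemma \ref{lem:deG-general}.

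Plugging $(u-k_i)_-$ into Lemma \ref{lem:ee-sing-2}, I would bound the left hand side from below using $|u|\leq 2M$: the comparability $(\phi(R)-\phi(k_i))_{-}\eqsim(|R|+|k_i|)^{m-1}(k_i-R)$, integrated, gives $\g_-(u,k_i)\geq cM^{m-1}(u-k_i)_-^2$, while the monotonicity of $r\mapsto r^{2(m-1)}$ (decreasing since $m\leq 1$) yields $\min\{|u(x)|^{2(m-1)},|u(y)|^{2(m-1)}\}\geq cM^{2(m-1)}$. The right hand side is controlled pointwise by $(u-k_i)_-\leq 3M$ in the Dirichlet and time-derivative terms, and the nonlocal term by the hypothesis $(\vartheta\rho/R)^{2s/m}\tl(u_{-},Q)\leq M$. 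Writing $\eta:=\vartheta^{2s}M^{1-m}$, this produces the key bound
\begin{align*}
M^{m-1}\esup_{t}\int\xi^2(u-k_i)_-^2\,dx+M^{2(m-1)}\iiint\min\{\xi^2\}\frac{|(u-k_i)_-(x)-(u-k_i)_-(y)|^2}{|x-y|^{n+2s}}\,dx\,dy\,dt\leq C\,2^{i(n+2s)}\frac{M^{2m}}{(\vartheta\rho)^{2s}}(1+\eta)|A_i|.
\end{align*}

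To close the iteration, I would pick a second cutoff $\psi$ equal to one on $Q_{i+1}$, supported in $\widetilde Q_i$ with $|\nabla\psi|\leq C2^i/(\vartheta\rho)$, apply the Sobolev embedding \cref{eq:SP1} to $((u-k_i)\psi)_-$, and combine with the measure step $(M/2^{i+3})^2|A_{i+1}|\leq\iint_{Q_{i+1}}(u-k_i)_-^2$ together with H\"older's inequality. After dividing through the two different $M$-powers on the left, the sup norm is bounded by $C2^{i(n+2s)}M^{m+1}(1+\eta)|A_i|/(\vartheta\rho)^{2s}$ and the Gagliardo seminorm by $C2^{i(n+2s)}M^2(1+\eta)|A_i|/(\vartheta\rho)^{2s}$. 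Tracking powers through the $n/(n+2s)$--$2s/(n+2s)$ split of Sobolev--H\"older and normalizing by $|Q_i|\sim(\vartheta\rho)^n\rho^{2s}$ produces the pure geometric factor $M^{2s(m-1)/(n+2s)}\vartheta^{-4s^2/(n+2s)}=\eta^{-2s/(n+2s)}$, giving the recursion
\begin{align*}
Y_{i+1}\leq Cb^i\,\eta^{-\frac{2s}{n+2s}}(1+\eta)\,Y_i^{1+\frac{2s}{n+2s}}.
\end{align*}
An appeal to Lemma \ref{lem:tech} with $\alpha=2s/(n+2s)$ then converts smallness of $Y_0$ into $Y_i\to 0$, with threshold $\nu'=C\eta(1+\eta)^{-(n+2s)/(2s)}=C\vartheta^{2s}M^{1-m}(1+\vartheta^{2s}M^{1-m})^{-(n+2s)/(2s)}$.

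The delicate step is the bookkeeping: Lemma \ref{lem:ee-sing-2} scales the sup and Gagliardo terms on its left hand side by the \emph{different} powers $M^{m-1}$ and $M^{2(m-1)}$, so after division these terms enter the Sobolev step with unequal $M$-coefficients, and one must verify that the $n/(n+2s)$ and $2s/(n+2s)$ weights in the Sobolev--H\"older interpolation collapse them precisely into the single intrinsic parameter $\eta=\vartheta^{2s}M^{1-m}$. This is the exact analogue of the substitution $\theta^{-1}\leftrightarrow\vartheta^{2s}$ relating Lemma \ref{lem:deG-general} to the present lemma, confirming that the space-scaled geometry with weight $\vartheta^{2s}$ is the natural fast-diffusion counterpart of the time-scaled geometry with weight $\theta^{-1}$.
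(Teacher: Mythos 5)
Your proposal is correct and arrives at the same threshold $\nu' = C\vartheta^{2s}M^{1-m}(1+\vartheta^{2s}M^{1-m})^{-(n+2s)/(2s)}$, but it takes a genuinely different route from the paper. The paper's proof never extracts an $L^2$-in-time quantity from $\g_{-}$: it uses the \emph{linear} lower bound $\g_{-}(u,k_i)\geq(\phi(k_i)-\phi(\widetilde k_i))(u-\widetilde k_i)_{-}$ (introducing the intermediate level $\widetilde k_i$ and paying a factor $2^{-i}$), which forces it to invoke the $L^1$-based Sobolev embedding \cref{eq:SP2} and a recursion with exponent $\tfrac{2s}{n+s}$. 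You instead observe that, because of the local bound $|u|\leq 2M$ and the comparability $\phi(k)-\phi(R)\eqsim(|k|+|R|)^{m-1}(k-R)$, one has the \emph{quadratic} lower bound $\g_{-}(u,k_i)\geq cM^{m-1}(u-k_i)_{-}^2$, which keeps the sup-in-time term at the $L^2$ level, lets you use \cref{eq:SP1} exactly as in the $m\geq 1$ Lemma \ref{lem:deG-general}, and produces a recursion with exponent $\tfrac{2s}{n+2s}$; the different exponents are immaterial after Lemma \ref{lem:tech}, and the final threshold coincides. Your route thus unifies the treatment of the two regimes, at the price of using the upper bound $|u|\leq 2M$ in an extra place (to control $(|k|+|R|)^{m-1}$ from below for $m<1$); that bound is available here so nothing is lost. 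Two small remarks: your levels $k_i=-\tfrac{M}{2}-\tfrac{M}{2^{i+1}}$ have the opposite sign from the paper's $k_i=\tfrac{M}{2}+\tfrac{M}{2^{i+1}}$ for super-solutions; since you invoke the $u\mapsto-u$ symmetry this is only a bookkeeping choice, but the paper's sign is the one for which $\{u<k_i\}$ is the set whose measure shrinks under the stated hypothesis on super-solutions, so you should match it (or equivalently, carry out the sub-solution case with your levels). Also, the paper's choice of the $L^1$ route has the mild advantage of not needing the local bound in the lower bound on $\g_{-}$, which makes it structurally closer to the standard porous-medium arguments in the literature; yours is the tighter use of the available information.
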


\begin{proof}
Without loss of generality we assume that $(x_0,t_0) = (0,0)$ and work with super-solutions; the case for sub-solutions being analogous. We note that for any $k>\widetilde{k}$ we have 
\begin{align*}
\int_u^k (\phi(s)-\phi(k))_{-}\,ds \geq \int_u^{\widetilde{k}} (\phi(s)-\phi(k))_{-}\,ds \geq (\phi(k)-\phi(\widetilde{k}))(u-\widetilde{k})_{-}.
\end{align*}
In particular, the energy estimate Lemma \ref{lem:ee-sing-2} becomes:
\begin{align*}
&(\phi(k)-\phi(\widetilde{k}))\esup_{t \in (-\rho^{2s},0)}\int_{K_{\vartheta\rho}}\xi^2(u-\widetilde{k})_{-}\,dx \\
& \qquad+ \int_{-\rho^{2s}}^0\int_{K_{\vartheta\rho}}\int_{K_{\vartheta\rho}} \min\{\xi^2(x,t),\xi^2(y,t)\}\min\{|u(x,t)|^{2(m-1)},|u(y,t)|^{2(m-1)}\}\frac{|(u-k)_{-}(x,t)-(u-k)_{-}(y,t)|^2}{|x-y|^{n+2s}}\,dx\,dy\,dt \\
&\leq C\int_{-\rho^{2s}}^0\int_{K_{\vartheta\rho}}\int_{K_{\vartheta\rho}}\max\{(u-k)_{-}^{2m}(x,t),(u-k)_{-}^{2m}(y,t)\}\frac{|\xi(x,t)-\xi(y,t)|^2}{|x-y|^{n+2s}}\,dx\,dy\,dt + \int_{-\rho^{2s}}^0\int_{K_{\vartheta\rho}}(u-k)_{-}^{m+1}|\de_t \xi^2|\,dx\,dt\\
&+\int_{-\rho^{2s}}^0\int_{K_{\vartheta\rho}}\xi^2(u-k)_{-}^m(x,t)\,dx\,dt\left(\underset{x\in \mbox{supp}(\xi(\cdot,t)); t\in (-\rho^{2s},0)}{\esup}\int_{\bb{R}^n\setminus K_{\vartheta\rho}}\frac{(\phi(u)-\phi(k))_{-}(y,t)}{|x-y|^{n+2s}}\,dy\right) + \int_{K_{\vartheta\rho}}\xi^2\g_{-}(u,k)(x,-\rho^{2s})\,dx.
\end{align*}
For $i = 0, 1, \ldots$ we set
\begin{align*}
k_i = \frac{M}{2}+\frac{M}{2^{i+1}}, \quad \rho_i = \rho+\frac{\rho}{2^i}, \quad K_i = K_{\vartheta\rho_i}, \quad Q_i = K_i\times (-\rho_i^{2s},0],
\end{align*}
and
\begin{align*}
\widetilde{k}_i = \frac{k_i+k_{i+1}}{2}, \quad \widetilde{\rho}_i = \frac{\rho_i+\rho_{i+1}}{2}, \quad \widetilde{K}_i = K_{\vartheta\widetilde{\rho}_i}, \quad \widetilde{Q}_i = \widetilde{K}_i\times (-\widetilde{\rho}_i^{2s},0].
\end{align*}

For the cutoff function in the energy estimate, we work with $0\leq \xi \leq 1$ such that
\begin{align*}
|\nabla \xi| \leq C\frac{2^i}{\vartheta\rho}, \quad  |\de_t \xi| \leq C\frac{4^{is}}{\rho^{2s}} \quad \mbox{ and } \quad \xi = 1 \mbox{ on } \widetilde{Q}_{i}
\end{align*}
with the support of $\xi$ chosen to be away from $\de_P Q_i$ such that for any $S\in \text{supp}(\xi)$ and $P \in \bb{R}^n\setminus Q_i$ we have \cref{eq:SPP}. We set $A_i = \{u<k_i\} \cap Q_i$. The choice of the support of $\xi$ ensures that
\begin{align*}
\int_{K_i}\xi^2(u-k_i)_{-}^2(x,-\rho_i^{2s})\,dx = 0
\end{align*}
and
\begin{align*}
\underset{x\in \mbox{supp}(\xi(\cdot,t)); t\in (-\rho_{i}^{2s},0)}{\esup}\int_{\bb{R}^n\setminus K_i}\frac{(\phi(u)-\phi(k_i))_{-}(y,t)}{|x-y|^{n+2s}}\,dy \leq C2^{i(n+2s)}\underset{t\in (-\rho_{i}^{2s},0)}{\esup}\int_{\bb{R}^n\setminus K_i}\frac{(\phi(u)-\phi(k_i))_{-}(y,t)}{|y|^{n+2s}}\,dy.
\end{align*}

In particular, we can estimate the nonlocal term as follows:
\begin{align*}
&\int_{-\rho_{i}^{2s}}^0\int_{K_i}\xi^2(u-k_i)_{-}^m(x,t)\,dx\,dt\left(\underset{x\in \mbox{supp}(\xi(\cdot,t)); t\in (-\rho_{i}^{2s},0)}{\esup}\int_{\bb{R}^n\setminus K_i}\frac{(\phi(u)-\phi(k_i))_{-}(y,t)}{|x-y|^{n+2s}}\,dy\right)\\
&\leq C2^{i(n+2s)}\int_{-\rho_{i+1}^{2s}}^0\int_{K_i}(u-k_i)_{-}^m(x,t)\,dx\,dt\left(\underset{t\in (-\rho_{i}^{2s},0)}{\esup}\int_{\bb{R}^n\setminus K_i}\frac{(\phi(u)-\phi(k_i))_{-}(y,t)}{|y|^{n+2s}}\,dy\right)\\
&\leq C2^{i(n+2s)}|A_i|M^m\left(\frac{CM^m}{(\vartheta\rho)^{2s}}+\underset{t\in (-\rho_{i}^{2s},0)}{\esup}\int_{\bb{R}^n\setminus K_R}\frac{(\phi(u)-\phi(k_i))_{-}(y,t)}{|y|^{n+2s}}\,dy\right)\\
&\leq C2^{i(n+2s)}|A_i|M^m\left(\frac{M^{m}}{(\vartheta\rho)^{2s}}+\left(\frac{\vartheta\rho}{R}\right)^{2s}\frac{M^{m}}{(\vartheta\rho)^{2s}}+\left(\frac{\vartheta\rho}{R}\right)^{2s}\frac{\tl^m(u_{-},Q)}{(\vartheta\rho)^{2s}}\right),
\end{align*}
where we were able to replace the integral over $\bb{R}^n\setminus K_i$ with $\bb{R}^n\setminus K_R$ because of the choice of $M$ and the by the definition of $k_i$. Therefore assuming that
\begin{align*}
\left(\frac{\vartheta\rho}{R}\right)^{\frac{2s}{m}}\tl(u_{-},Q) \leq M
\end{align*}
and recalling that $\vartheta\rho\leq R$ by our choice of the cylinder, we get
\begin{align*}
\int_{-\vartheta\rho_{i+1}^{2s}}^0\int_{K_i}\xi^2(u-k_i)^m_{-}(x,t)\,dx\,dt\left(\underset{x\in \mbox{supp}(\xi(\cdot,t)); t\in (-\rho_{i}^{2s},0)}{\esup}\int_{\bb{R}^n\setminus K_i}\frac{(\phi(u)-\phi(k_i))_{-}(y,t)}{|x-y|^{n+2s}}\,dy\right) \leq C2^{i(n+2s)}|A_i|\frac{M^{2m}}{(\vartheta\rho)^{2s}}.
\end{align*}
Recalling that $2M \geq |u|$ locally and the properties of the test function, we estimate the remaining terms on the right hand side as follows:
\begin{align*}
\int_{-\rho_i^{2s}}^0\int_{K_i}\int_{K_i}\max\{(u-k_i)_{-}^{2m}(x,t),(u-k_i)_{-}^{2m}(y,t)\}\frac{|\xi(x,t)-\xi(y,t)|^2}{|x-y|^{n+2s}}\,dx\,dy\,dt \leq C\frac{4^{n}}{(\vartheta\rho)^{2s}}M^{2m}|A_i|
\end{align*}
and
\begin{align*}
\int_{-\rho_i^{2s}}^0\int_{K_i}(u-k_i)_{-}^{m+1}|\de_t \xi^2|\,dx\,dt \leq C\frac{4^{is}}{\rho^{2s}}M^{1+m}|A_i|,
\end{align*}

For the left land side of the energy estimate we estimate from below over the smaller region $\widetilde{K}_{\vartheta\rho}$ in space and $(-\widetilde{\rho}^{2s},0)$ in time where the test functions are identically unity. Next, we further estimate from below as follows:
\begin{align*}
\phi(k_i)-\phi(\widetilde{k}_i) = \int_{\widetilde{k}_i}^{k_i}\phi'(s)\,ds \geq mM^{m-1}(k_i-\widetilde{k}_i) = \frac{mM^m}{2^{i+3}},
\end{align*}
where we used that $M \geq k_i$ and
\begin{align*}
&\int_{-\widetilde{\rho}_i^{2s}}^0\int_{\widetilde{K}_i}\int_{\widetilde{K}_i}\min\{|u(x,t)|^{2(m-1)},|u(y,t)|^{2(m-1)}\}\frac{|(u-k)_{-}(x,t)-(u-k)_{-}(y,t)|^2}{|x-y|^{n+2s}}\,dx\,dy\,dt\\
&\geq (2M)^{2(m-1)}\int_{-\widetilde{\rho}_i^{2s}}^0\int_{\widetilde{K}_i}\int_{\widetilde{K}_i}\frac{|(u-k)_{-}(x,t)-(u-k)_{-}(y,t)|^2}{|x-y|^{n+2s}}\,dx\,dy\,dt
\end{align*}
using that $|u| \leq 2M$ and $m-1\leq 0$.

Putting together the above estimates yields
\begin{align}\label{eq:combine}
&\frac{M^m}{2^i}\esup_{t \in (-\widetilde{\rho}_i^{2s},0)}\int_{\widetilde{K}_i}(u-\widetilde{k}_i)_{-}\,dx 
+ M^{2(m-1)}\int_{-\widetilde{\rho}_i^{2s}}^0\int_{\widetilde{K}_i}\int_{\widetilde{K}_i}\frac{|(u-\widetilde{k}_n)_{-}(x,t)-(u-\widetilde{k}_n)_{-}(y,t)|^2}{|x-y|^{n+2s}}\,dx\,dy\,dt \\
&\leq C\left(\frac{4^{i}}{(\vartheta\rho)^{2s}}M^{2m}+\frac{4^{is}}{\rho^{2s}}M^{1+m}+\frac{2^{i(n+4)}}{(\vartheta\rho)^{2s}}M^{2m}\right)|A_i|.    
\end{align}
Let $0\leq \psi \leq 1$ be a cutoff function such that $\psi\equiv 0$ on $\partial_P\widetilde{Q}_i$ and $\psi = 1$ on $Q_{i+1}$ with $|\de \psi| \leq 2^i(\vartheta\rho)^{-1}$.  We apply H\"older inequality and Sobolev embedding \cref{eq:SP2} to get
\begin{align*}
&\left(\frac{M}{2^{i+3}}\right)^{2}|A_{i+1}|\leq \iint_{\widetilde{Q}_i}(u-\widetilde{k}_i)_{-}^2\psi^2 \,dx\,dt \\
&\leq\left(\iint_{\widetilde{Q}_i}((u-\widetilde{k}_i)\psi)_{-}^{2\frac{n+s}{n}} \,dx\,dt\right)^{\frac{n}{n+s}}|A_i|^{\frac{s}{n+s}}\\
&\leq \left(\int_{-\widetilde{\rho}_i^{2s}}^0\int_{\widetilde{K}_i}\int_{\widetilde{K}_i} \frac{|((u-\widetilde{k}_i)\psi)_{-}(x,t)-((u-\widetilde{k}_i)\psi)_{-}(y,t)|^2}{|x-y|^{n+2s}} \,dx\,dy\,dt \right)^{\frac{n}{n+s}}\left(\esup_{t\in I}\int_K |((u-\widetilde{k}_i)\psi)_{-}(x,t)|\,dx\right)^{\frac{2s}{n+s}}|A_i|^{\frac{s}{n+s}}.
\end{align*}
Using $|\nabla\psi|\leq 2^i(\vartheta\rho)^{-1}$ and the triangle inequality yields
\begin{align*}
&\int_{-\widetilde{\rho}_i^{2s}}^0\int_{\widetilde{K}_i}\int_{\widetilde{K}_i} \frac{|((u-\widetilde{k}_i)\psi)_{-}(x,t)-((u-\widetilde{k}_i)\psi)_{-}(y,t)|^2}{|x-y|^{n+2s}} \,dx\,dy\,dt \\
&\leq C\int_{-\widetilde{\rho}_i^{2s}}^0\int_{\widetilde{K}_i}\int_{\widetilde{K}_i} \frac{|(u-\widetilde{k}_i)_{-}(x,t)-(u-\widetilde{k}_i)_{-}(y,t)|^2}{|x-y|^{n+2s}} \,dx\,dy\,dt + C\frac{4^i}{(\vartheta\rho)^{2s}}\iint_{\widetilde{Q}_i}(u-\widetilde{k}_i)_{-}^2\,dx\,dt.
\end{align*}
From \cref{eq:combine} we get
\begin{align*}
\int_{-\widetilde{\rho}_i^{2s}}^0\int_{\widetilde{K}_i}\int_{\widetilde{K}_i} \frac{|(u-\widetilde{k}_i)_{-}(x,t)-(u-\widetilde{k}_i)_{-}(y,t)|^2}{|x-y|^{n+2s}} \,dx\,dy\,dt \leq C\frac{M^2}{M^{2m}}\left(\frac{4^{i}}{(\vartheta\rho)^{2s}}M^{2m}+\frac{4^{is}}{\rho^{2s}}M^{1+m}+\frac{2^{i(n+4)}}{(\vartheta\rho)^{2s}}M^{2m}\right)|A_i|.
\end{align*}
We also have
\begin{align*}
\frac{4^i}{(\vartheta\rho)^{2s}}\iint_{\widetilde{Q}_i}(u-\widetilde{k}_i)_{-}^2\,dx\,dt \leq C \frac{4^i}{(\vartheta\rho)^{2s}}M^2|A_i|.
\end{align*}
For the time term, using $0 \leq \psi \leq 1$, we get from above that
\begin{align*}
\esup_{t\in I}\int_K |((u-\widetilde{k}_i)\psi)_{-}(x,t)|\,dx \leq C\frac{2^i}{M^{m}}\left(\frac{4^{i}}{(\vartheta\rho)^{2s}}M^{2m}+\frac{4^{is}}{\rho^{2s}}M^{1+m}+\frac{2^{i(n+4)}}{(\vartheta\rho)^{2s}}M^{2m}\right)|A_i|.
\end{align*}
Therefore
\begin{align*}
\left(\frac{M}{2^{i+3}}\right)^{2}|A_{i+1}|\leq C\left(\frac{M^2}{M^{2m}}\right)^{\frac{n}{n+s}}\left(\frac{2^i}{M^{m}}\right)^{\frac{2s}{n+s}}\left(\frac{4^{is}}{\rho^{2s}}M^{1+m}+\frac{2^{i(n+4)}}{(\vartheta\rho)^{2s}}M^{2m}\right)^{\frac{n+2s}{n+s}}|A_i|^{1+\frac{2s}{n+s}},
\end{align*}
which implies
\begin{align*}
|A_{i+1}| \leq Cb^iM^{\frac{-2mn-2ms-2s}{n+s}}\left(\frac{M^{2m}}{(\vartheta\rho)^{2s}}\right)^{\frac{n+2s}{n+s}}\left(1+\vartheta^{2s}M^{1-m}\right)^{\frac{n+2s}{n+s}}|A_i|^{1+\frac{2s}{n+s}}.
\end{align*}	
Setting $Y_i = \frac{|A_i|}{|Q_i|}$, we get
\begin{align*}
Y_{i+1} \leq Cb^i\left(\frac{1}{\vartheta^{2s}M^{1-m}}\right)^{\frac{2s}{n+s}}\left(1+\vartheta^{2s}M^{1-m}\right)^{\frac{n+2s}{n+s}}Y_i^{1+\frac{2s}{n+s}}.
\end{align*}
Therefore, by Lemma \ref{lem:tech}, $Y_i \rightarrow 0$ provided
\begin{align*}
Y_0 \leq C \vartheta^{2s}M^{1-m} \left(1+\vartheta^{2s}M^{1-m}\right)^{-\frac{n+2s}{2s}}=:\nu'
\end{align*}
The conclusion follows.
\end{proof}

\begin{rem}\label{rem:deG-general-tail}
We can also assume
\begin{align*}
\left(\frac{\rho}{R}\right)^{\frac{2s}{m}}\tl(u_{\mp},Q) \leq CM\quad\text{in Lemma \ref{lem:deG-general}}\quad\text{and}\quad \left(\frac{\vartheta\rho}{R}\right)^{\frac{2s}{m}}\tl(u_{\mp},Q) \leq CM\quad\text{in Lemma \ref{lem:deG-general}},
\end{align*}
where $C\geq1$ is a constant that depends only on \textbf{data}. 
\end{rem}
\begin{rem}\label{rem:deG-ss}
With the choice of $\theta=M^{1-m}$ in Lemma \ref{lem:deG-general}, $\nu$ becomes a universal constant depending only on \textbf{data}. Also, with the choice of $\vartheta=M^{\frac{m-1}{2s}}$ in Lemma \ref{lem:deG-general'}, $\nu'$ becomes a universal constant depending only on \textbf{data}.
\end{rem}

\section{Analysis near the set $\{u \approx 0\}$ of singularity/degeneracy}\label{sec:close0}
Let us first assume $m\geq 1$. We work with a cylinder
\begin{align}\label{eq:Q}
(x_0,t_0)+Q_{\rho}(\theta) \subset \subset Q \subset \Omega_T.
\end{align}
Let $M>0$ be a number such that
\begin{align}\label{eq:bdd.M}
2M \geq \esup_{Q} |u| + \tl(u;Q).
\end{align}
Without loss of generality we assume that $(x_0,t_0) = (0,0)$. Recalling Remark \ref{rem:deG-ss} we set 
\begin{align*}
\theta = M^{1-m},
\end{align*}
and fix $\nu \in (0,1)$ to be the corresponding universal constant as obtained in Lemma \ref{lem:deG-general}. We will first work with the case when $u$ is close to $0$ and the far off effects are small i.e., we assume that the following conditions hold: 
\begin{align}\label{eq:tail.nu}
\left(\frac{\rho}{R}\right)^{\frac{2s}{m}}\tl(u_+,Q) \leq \pi^{\frac{1}{m}} M,\quad\quad |\{u \leq M\} \cap Q_{\rho}(\theta)| > \nu |Q_{\rho}(\theta)|
\end{align}
and
\begin{align}\label{eq:tail.nu2}
\left(\frac{\rho}{R}\right)^{\frac{2s}{m}}\tl(u_-,Q) \leq \pi^{\frac{1}{m}} M,\quad\quad |\{u \geq -M\} \cap Q_{\rho}(\theta)| > \nu |Q_{\rho}(\theta)|
\end{align}
with sufficiently small $\pi$ depending on \textbf{data} to be chosen later. As a first step, the smallness of far-off effects will follow by sufficiently zooming out (see \cref{prop:close-to-0}). However, when we iterate it down to smaller scales we must carefully add up the nonlocal effects - this is done in the next section.

When \cref{eq:tail.nu} holds, we claim that 
\begin{align*}
u \leq 2M(1-\eta) \mbox{ a.e. in } Q_{\frac{\rho}{2}}(\nu\theta)
\end{align*}
whereas if \cref{eq:tail.nu2} holds, then we will get
\begin{align*}
u \geq - 2M(1-\eta) \mbox{ a.e. in } Q_{\frac{\rho}{2}}(\nu\theta)
\end{align*}
so that 
\begin{align*}
\esup_{Q_{\frac{\rho}{2}}(\nu\theta)} |u| \leq 2M(1-\eta),
\end{align*}
where $\eta \in (0,1)$ is a universal constant.

We note that the equation \cref{eq:main-eq} is invariant under a sign-change and that the condition \cref{eq:tail.nu2} is exactly the condition \cref{eq:tail.nu} but for $-u$. Therefore, there is no loss of generality in working with \cref{eq:tail.nu}. Note that by \cref{eq:bdd.M} we have 
\begin{align*}
|u| \leq 2M \mbox{ in } Q_{\rho}(\theta).
\end{align*}
Moreover, we will study $u$ in the set
\begin{align}\label{eq:2M}
\{M \leq |u| \leq 2M\} \cap Q_{\rho}(\theta),
\end{align}
In summary, we work under the condition
\begin{equation}\label{eq:close-to-0}
|\{u \leq M\} \cap Q_{\rho}(\theta)| > \nu |Q_{\rho}(\theta)| 
\end{equation}
in the set
\begin{equation}\label{eq:u-is-M}
\{M \leq |u| \leq 2M\} \cap Q_{\rho}(\theta)
\end{equation}

Now let us assume $0<m\leq 1$. We work with a cylinder
\begin{align}\label{eq:Q'}
(x_0,t_0)+\mathfrak{Q}_{\rho}(\vartheta) \subset \subset Q \subset \Omega_T.
\end{align}
Let $M>0$ be a number such that
\begin{align}\label{eq:bdd.M'}
2M \geq \esup_{Q} |u| + \tl^m(u;Q).
\end{align}
Without loss of generality we assume that $(x_0,t_0) = (0,0)$. Recalling Remark \ref{rem:deG-ss} we set 
\begin{align*}
\vartheta = M^{\frac{m-1}{2s}},
\end{align*}
and fix $\nu' \in (0,1)$ to be the corresponding universal constant as obtained in Lemma \ref{lem:deG-general'}. We will first work with the case when $u$ is close to $0$ and the far off effects are small i.e., we assume that the following conditions hold: 
\begin{align}\label{eq:tail.nu'}
\left(\frac{\vartheta\nu'^{-\frac{1}{2s}}\rho}{R}\right)^{\frac{2s}{m}}\tl(u_+,Q) \leq \pi'^{\frac{1}{m}} M,\quad\quad |\{u \leq M\} \cap \mathfrak{Q}_{\rho}(\vartheta)| > \nu' |\mathfrak{Q}_{\rho}(\vartheta)|
\end{align}
and
\begin{align}\label{eq:tail.nu2'}
\left(\frac{\vartheta\nu'^{-\frac{1}{2s}}\rho}{R}\right)^{\frac{2s}{m}}\tl(u_-,Q) \leq \pi'^{\frac{1}{m}} M,\quad\quad |\{u \geq -M\} \cap \mathfrak{Q}_{\rho}(\vartheta)| > \nu' |\mathfrak{Q}_{\rho}(\vartheta)|
\end{align}
with sufficiently small $\pi'$ depending on \textbf{data}. As a first step, the smallness of far-off effects will follow by sufficiently zooming out. (see \cref{prop:close-to-0}). However, when we iterate it down to smaller scales we must carefully add up the nonlocal effects - this is done in the next section.

When \cref{eq:tail.nu'} holds, we claim that 
\begin{align*}
u \leq 2M(1-\eta') \mbox{ a.e. in } \mathfrak{Q}_{\frac{\rho}{2}}(\nu'^{-\frac{1}{2s}}\vartheta)
\end{align*}
whereas if \cref{eq:tail.nu2'} holds, then we will get
\begin{align*}
u \geq - 2M(1-\eta') \mbox{ a.e. in } \mathfrak{Q}_{\frac{\rho}{2}}(\nu'^{-\frac{1}{2s}}\vartheta)
\end{align*}
so that 
\begin{align*}
\esup_{\mathfrak{Q}_{\frac{\rho}{2}}(\nu'^{-\frac{1}{2s}}\vartheta)} |u| \leq 2M(1-\eta'),
\end{align*}
where $\eta' \in (0,1)$ is a universal constant.

We note that the equation \cref{eq:main-eq} is invariant under a sign-change and that the condition \cref{eq:tail.nu2'} is exactly the condition \cref{eq:tail.nu'} but for $-u$. Therefore, there is no loss of generality in working with \cref{eq:tail.nu'}. Note that by \cref{eq:bdd.M'} we have 
\begin{align*}
|u| \leq 2M \mbox{ in } \mathfrak{Q}_{\rho}(\vartheta).
\end{align*}
Moreover, we will study $u$ in the set
\begin{align}\label{eq:2M'}
\{M \leq |u| \leq 2M\} \cap \mathfrak{Q}_{\rho}(\vartheta),
\end{align}
In summary, we work under the condition
\begin{equation}\label{eq:close-to-0'}
|\{u \leq M\} \cap \mathfrak{Q}_{\rho}(\vartheta)| > \nu |\mathfrak{Q}_{\rho}(\vartheta)| 
\end{equation}
in the set
\begin{equation}\label{eq:u-is-M'}
\{M \leq |u| \leq 2M\} \cap \mathfrak{Q}_{\rho}(\vartheta)
\end{equation}

We prove the following lemma. Recall that $\theta=M^{1-m}$.

\begin{lem}\label{lem:time-slice}
Suppose that \cref{eq:close-to-0} holds. Then there is a time $\tau \in \left(-\theta\rho^{2s},-\frac{\nu}{2}\theta\rho^{2s}\right]$ such that
\begin{equation}\label{eq:time-slice}
\left|\left\{u(\cdot,\tau) \leq M \right\} \cap K_{\rho}\right| > \frac{\nu}{2}\left|K_{\rho}\right|.
\end{equation}
\end{lem}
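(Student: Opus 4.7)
The plan is to argue by contradiction and use Fubini's theorem on the characteristic function of $\{u\leq M\}\cap Q_{\rho}(\theta)$. Assume that no such $\tau$ exists, that is, for almost every $t\in(-\theta\rho^{2s},-\tfrac{\nu}{2}\theta\rho^{2s}]$ we have
\begin{align*}
|\{u(\cdot,t)\leq M\}\cap K_{\rho}|\leq \tfrac{\nu}{2}|K_{\rho}|.
\end{align*}

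Next, I would split the time interval $(-\theta\rho^{2s},0]$ into the two pieces $(-\theta\rho^{2s},-\tfrac{\nu}{2}\theta\rho^{2s}]$ and $(-\tfrac{\nu}{2}\theta\rho^{2s},0]$ and write
\begin{align*}
|\{u\leq M\}\cap Q_{\rho}(\theta)|=\int_{-\theta\rho^{2s}}^{-\tfrac{\nu}{2}\theta\rho^{2s}}\!|\{u(\cdot,t)\leq M\}\cap K_{\rho}|\,dt+\int_{-\tfrac{\nu}{2}\theta\rho^{2s}}^{0}\!|\{u(\cdot,t)\leq M\}\cap K_{\rho}|\,dt.
\end{align*}
On the first piece, the contradiction hypothesis gives the slice bound $\tfrac{\nu}{2}|K_{\rho}|$, so the integral is at most $\tfrac{\nu}{2}|K_{\rho}|\cdot(1-\tfrac{\nu}{2})\theta\rho^{2s}$. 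On the second piece I use the trivial bound $|\{u(\cdot,t)\leq M\}\cap K_{\rho}|\leq|K_{\rho}|$ to obtain at most $|K_{\rho}|\cdot\tfrac{\nu}{2}\theta\rho^{2s}$.

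Adding the two contributions gives
\begin{align*}
|\{u\leq M\}\cap Q_{\rho}(\theta)|\leq \tfrac{\nu}{2}|K_{\rho}|\theta\rho^{2s}\left(2-\tfrac{\nu}{2}\right)<\nu|K_{\rho}|\theta\rho^{2s}=\nu|Q_{\rho}(\theta)|,
\end{align*}
which contradicts the standing hypothesis \cref{eq:close-to-0}. Hence a time $\tau$ as in \cref{eq:time-slice} must exist. The argument is elementary with no real obstacle; the only thing to watch is getting the splitting of the interval correct so that the "bad" piece has length $(1-\tfrac{\nu}{2})\theta\rho^{2s}$ and the "good" piece has length $\tfrac{\nu}{2}\theta\rho^{2s}$, which is exactly what makes the sum strictly less than $\nu\,|Q_{\rho}(\theta)|$.
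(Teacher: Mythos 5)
Your proof is correct and is essentially the same contradiction-plus-time-splitting argument that the paper uses: suppose the slice bound fails on $(-\theta\rho^{2s},-\tfrac{\nu}{2}\theta\rho^{2s}]$, integrate the slice bound there and the trivial bound $|K_\rho|$ on the remaining interval of length $\tfrac{\nu}{2}\theta\rho^{2s}$, and observe the total is strictly less than $\nu|Q_\rho(\theta)|$. The arithmetic $\tfrac{\nu}{2}(2-\tfrac{\nu}{2})<\nu$ checks out, so the proposal matches the paper's proof.
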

\begin{proof}
If not, then
\begin{align*}
|\{u \leq M\} \cap Q_{\rho}(\theta)| &\leq \int_{-\theta\rho^{2s}}^{-\theta\rho^{2s}\nu/2} \left|\left\{u(\cdot,t) \leq M \right\} \cap K_{\rho}\right| \,dt+ \frac{\theta\rho^{2s}\nu}2\left|K_{\rho}\right|\\
&\leq\theta\frac{\nu}{2}\left(1-\frac{\nu}2\right)\rho^{2s}\left|K_{\rho}\right|+\frac{\theta\nu}2\rho^{2s}\left|K_{\rho}\right|<\nu |Q_{\rho}(\theta)|
\end{align*}
and
\begin{align*}
\frac{1}{2}\nu\left|K_{\rho}\right|\theta\rho^{2s} = \frac{1}{2}\nu |Q_{\rho}(\theta)|,
\end{align*}
which contradicts \cref{eq:close-to-0}.
\end{proof}
Similarly, one can prove the following lemma. Recall that $\vartheta=M^{\frac{m-1}{2s}}$.
\begin{lem}\label{lem:time-slice'}
Suppose that 
\begin{align}\label{eq:nu'}
|\{u\leq M\}\cap\mathfrak{Q}_{\rho}(\vartheta)|>\frac{\nu'}{2}|\mathfrak{Q}_{\rho}(\vartheta)|
\end{align}
holds. Then there is a time $\tau \in \left(-\rho^{2s},-\frac{\nu'^{1+\frac{n}{2s}}}{2}\rho^{2s}\right]$ such that
\begin{equation}\label{eq:time-slice'}
\left|\left\{u(\cdot,\tau) \leq M \right\} \cap K_{\vartheta\rho\nu'^{-\frac{1}{2s}}}\right| > \frac{\nu'^{1+\frac{n}{2s}}}{2}\left|K_{\vartheta\rho\nu'^{-\frac{1}{2s}}}\right|.
\end{equation}
\end{lem}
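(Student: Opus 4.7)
The plan is to argue by contradiction in direct analogy with the proof of Lemma \ref{lem:time-slice}, with the additional ingredient being the strict spatial inclusion $K_{\vartheta\rho}\subset K_{\vartheta\rho\nu'^{-1/(2s)}}$ and the resulting cube-volume ratio
\[
\frac{|K_{\vartheta\rho\nu'^{-1/(2s)}}|}{|K_{\vartheta\rho}|}=\nu'^{-n/(2s)},
\]
which is what converts the hypothesis (stated in the smaller cube $K_{\vartheta\rho}$) into a statement about the larger cube $K_{\vartheta\rho\nu'^{-1/(2s)}}$.

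Concretely, I would set $A:=\nu'^{1+n/(2s)}/2$ and suppose for contradiction that the conclusion \cref{eq:time-slice'} fails for every $\tau$ in the indicated time interval, i.e.\ that for a.e.\ $\tau\in (-\rho^{2s},-A\rho^{2s}]$ one has
\[
\bigl|\{u(\cdot,\tau)\leq M\}\cap K_{\vartheta\rho\nu'^{-1/(2s)}}\bigr|\leq A\,\bigl|K_{\vartheta\rho\nu'^{-1/(2s)}}\bigr|.
\]
Restricting to the smaller cube and using the volume ratio above,
\[
\bigl|\{u(\cdot,\tau)\leq M\}\cap K_{\vartheta\rho}\bigr|\leq \bigl|\{u(\cdot,\tau)\leq M\}\cap K_{\vartheta\rho\nu'^{-1/(2s)}}\bigr|\leq A\cdot \nu'^{-n/(2s)}\,|K_{\vartheta\rho}|=\tfrac{\nu'}{2}\,|K_{\vartheta\rho}|,
\]
which is the same type of slicewise bound that appears in the $m\geq 1$ case.

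The next step would be to split the time integral of $|\{u(\cdot,\tau)\leq M\}\cap K_{\vartheta\rho}|$ over $(-\rho^{2s},0]$ into the portion $(-\rho^{2s},-A\rho^{2s}]$, where the above bound applies, and the portion $(-A\rho^{2s},0]$, where one uses the trivial bound $|K_{\vartheta\rho}|$. After integrating one obtains
\[
\bigl|\{u\leq M\}\cap\mathfrak{Q}_{\rho}(\vartheta)\bigr|\leq |\mathfrak{Q}_{\rho}(\vartheta)|\Bigl[\tfrac{\nu'}{2}(1-A)+A\Bigr],
\]
and comparing with the hypothesis \cref{eq:nu'} yields a contradiction once one tracks the arithmetic (the cross-term $A(1-\nu'/2)$ plays the same role as the cross-term $-\nu^2/4$ in the $m\geq 1$ case). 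The main obstacle — the only part that differs from the $m\geq 1$ proof — is carefully keeping track of the two cube scales and accounting for the factor $\nu'^{n/(2s)}$ when comparing measures on $K_{\vartheta\rho}$ with measures on $K_{\vartheta\rho\nu'^{-1/(2s)}}$; the remainder of the argument is the same measure-theoretic contradiction that proves Lemma \ref{lem:time-slice}.
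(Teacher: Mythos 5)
Your overall route — contradiction, restricting to the smaller cube, splitting the time integral at $-A\rho^{2s}$ with $A=\nu'^{1+n/(2s)}/2$, and using the cube-volume ratio $|K_{\vartheta\rho\nu'^{-1/(2s)}}|/|K_{\vartheta\rho}|=\nu'^{-n/(2s)}$ — is exactly the paper's. But your final step contains a genuine arithmetic error, and your analogy with the $m\geq 1$ case is misleading. The coefficient you derive is
\begin{align*}
\frac{\nu'}{2}(1-A)+A \;=\; \frac{\nu'}{2} + A\Bigl(1-\frac{\nu'}{2}\Bigr),
\end{align*}
and since $A>0$ and $\nu'<1$, this is \emph{strictly larger} than $\nu'/2$. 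Hence your bound
\begin{align*}
\bigl|\{u\leq M\}\cap\mathfrak{Q}_{\rho}(\vartheta)\bigr|\leq \Bigl[\tfrac{\nu'}{2}(1-A)+A\Bigr]\,|\mathfrak{Q}_{\rho}(\vartheta)|
\end{align*}
does \emph{not} contradict \cref{eq:nu'} (which only asserts $>\tfrac{\nu'}{2}|\mathfrak{Q}_{\rho}(\vartheta)|$). Moreover, the cross-term $A(1-\nu'/2)$ you isolate is positive, so it does not ``play the same role'' as $-\nu^2/4$ in \cref{lem:time-slice}: there the coefficient is $\tfrac{\nu}{2}(1-\tfrac{\nu}{2})+\tfrac{\nu}{2}=\nu-\tfrac{\nu^2}{4}<\nu$, and the comparison is made against the full $\nu$, not against $\nu/2$.

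What your computation \emph{does} give is $\tfrac{\nu'}{2}(1-A)+A<\nu'$, which follows from $A=\nu'^{1+n/(2s)}/2\leq \nu'/2 < \nu'/(2-\nu')$. Thus the bound contradicts the stronger assumption $|\{u\leq M\}\cap\mathfrak{Q}_{\rho}(\vartheta)|>\nu'|\mathfrak{Q}_{\rho}(\vartheta)|$, i.e.\ \cref{eq:close-to-0'} — which is in fact exactly what the paper's own proof cites in its last line, and is also the hypothesis under which the lemma is invoked later (cf.\ \cref{eq:measureQ'} in \cref{prop:close-to-zero+'}). In short: your proposal should compare against $\nu'$ rather than $\nu'/2$; the printed hypothesis \cref{eq:nu'} appears to carry a typographical factor of $\tfrac12$, since the conclusion you are trying to reach is genuinely unattainable from the weaker $>\nu'/2$ assumption (a slicewise averaging over the sub-interval of length $(1-A)\rho^{2s}$ yields a lower bound $\tfrac{\nu'/2-A}{1-A}|K_{\vartheta\rho}|$ which never exceeds the target $\tfrac{\nu'}{2}|K_{\vartheta\rho}|$).
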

\begin{proof}
If not, then
\begin{align*}
|\{u \leq M\} \cap Q_{\rho}(\vartheta)| &\leq \int_{-\rho^{2s}}^{-\rho^{2s}\nu'^{1+\frac{n}{2s}}/2} \left|\left\{u(\cdot,t) \leq M \right\} \cap K_{\vartheta\rho\nu'^{-\tfrac{1}{2s}}}\right| \,d\tau+ \frac{\rho^{2s}\nu'^{1+\frac{n}{2s}}}2\left|K_{\vartheta\rho\nu'^{-\frac1{2s}}}\right|\\
&\leq \frac{\nu'^{1+\frac{n}{2s}}}2\left(1-\frac{\nu'^{1+\frac{n}{2s}}}2\right)\rho^{2s}\left|K_{\vartheta\rho\nu'^{-\frac1{2s}}}\right|+\frac{\nu'^{1+\frac{n}{2s}}}2\rho^{2s}\left|K_{\vartheta\rho\nu'^{-\frac1{2s}}}\right|<\nu' |Q_{\rho}(\vartheta)|
\end{align*}
and
\begin{align*}
\frac{1}{2}\nu'^{1+\frac{n}{2s}}\left|K_{\vartheta\rho\nu'^{-\tfrac{1}{2s}}}\right|\rho^{2s} = \frac{1}{2}\nu' |Q_{\rho}(\vartheta)|,
\end{align*}
which contradicts \cref{eq:close-to-0'}.
\end{proof}

Now we prove several lemmas. The following is a time propagation lemma.

\begin{lem}\label{prop:time-exp}
Let $m\geq 1$. Suppose that \cref{eq:time-slice} holds. Then there is a $\delta \in (0,1)$ and $\epsilon \in (0,\tfrac{1}{4})$ which depend only on $\nu$ and the \textbf{data} such that
\begin{equation}\label{eq:time-exp}
\left|\{u(\cdot,t) \leq 2M - \epsilon M\} \cap K_{\rho}\right| \geq \frac{\nu}{4}\left|K_{\rho}\right| \mbox{ for all times } t \in (\tau,\tau+\delta\theta\rho^{2s}],
\end{equation}
provided 
\begin{align}\label{eq:tail}
\left(\frac{\rho}{R}\right)^{\frac{2s}{m}}\tl(u_{+},Q) \leq M 
\end{align}
holds. 
\end{lem}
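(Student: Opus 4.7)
The plan is to apply the Caccioppoli estimate of Lemma~\ref{lem:ee} with truncation level $k = M$ on the forward cylinder $K_\rho \times (\tau, t]$ for any $t \in (\tau, \tau + \delta\theta\rho^{2s}]$, using the time-slice information \eqref{eq:time-slice} as initial data, and then converting the resulting $L^\infty_t L^2_x$ bound on $(u-M)_+$ into the desired measure estimate through a two-level Chebyshev argument. I fix a time-independent cutoff $\xi = \xi(x)$ with $\xi \equiv 1$ on $K_{(1-\sigma)\rho}$, $\mathrm{supp}(\xi) \subset K_{(1-\sigma/2)\rho}$, and $|\nabla\xi| \leq C(\sigma\rho)^{-1}$, for a small $\sigma \in (0,1)$ to be chosen.

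Three contributions must be controlled on the right-hand side of Lemma~\ref{lem:ee}. The initial datum term is bounded using \eqref{eq:time-slice} and $|u| \leq 2M$ by
\begin{equation*}
\int_{K_\rho}\xi^2(u(\cdot,\tau)-M)_+^2\,dx \leq M^2(1-\tfrac{\nu}{2})|K_\rho|.
\end{equation*}
The local gradient term, using $(u-M)_+ \leq M$, $|u|^{m-1} \leq (2M)^{m-1}$, and $\theta = M^{1-m}$, is bounded by $C\sigma^{-2s}\delta M^2|K_\rho|$. For the tail term, for $x \in \mathrm{supp}(\xi)$ and $y \notin K_\rho$ one has $|y|/|y-x| \leq C\sigma^{-1}$, so
\begin{equation*}
\underset{x \in \mathrm{supp}(\xi);\,t}{\esup}\int_{\bb{R}^n \setminus K_\rho} \frac{(\phi(u)-\phi(M))_+(y,t)}{|x-y|^{n+2s}}\,dy \leq C\sigma^{-(n+2s)}\esup_{t}\int_{\bb{R}^n \setminus K_\rho} \frac{u_+^m(y,t)}{|y|^{n+2s}}\,dy.
\end{equation*}
Splitting the latter integral over $K_R \setminus K_\rho$ (where $|u| \leq 2M$) and $\bb{R}^n \setminus K_R$ (where hypothesis \eqref{eq:tail} applies), the total is at most $CM^m\rho^{-2s}$; multiplying by $\int\xi^2(u-M)_+\,dx \leq M|K_\rho|$ and by $\delta\theta\rho^{2s}$ gives the nonlocal bound $C\sigma^{-(n+2s)}\delta M^2|K_\rho|$. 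Since $\partial_t\xi \equiv 0$, collecting all terms yields
\begin{equation*}
\esup_{t \in (\tau,\tau + \delta\theta\rho^{2s}]}\int_{K_{(1-\sigma)\rho}} (u(\cdot,t)-M)_+^2\,dx \leq M^2|K_\rho|\Bigl[(1-\tfrac{\nu}{2}) + C\sigma^{-(n+2s)}\delta\Bigr].
\end{equation*}

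To transfer this to a measure estimate I observe that on $\{u(\cdot,t) > 2M - \epsilon M\}$ the truncation satisfies $(u-M)_+ \geq M(1-\epsilon)$, so Chebyshev gives
\begin{equation*}
|\{u(\cdot,t) > 2M - \epsilon M\} \cap K_{(1-\sigma)\rho}| \leq (1-\epsilon)^{-2}\bigl[(1-\tfrac{\nu}{2}) + C\sigma^{-(n+2s)}\delta\bigr]|K_\rho|,
\end{equation*}
and together with $|K_\rho \setminus K_{(1-\sigma)\rho}| \leq Cn\sigma|K_\rho|$ this controls $|\{u(\cdot,t) > 2M - \epsilon M\} \cap K_\rho|$. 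The parameters are then fixed in the order (i) $\sigma$ small so $Cn\sigma \leq \nu/16$, (ii) $\epsilon$ small (depending on $\nu$) so that $(1-\nu/2)/(1-\epsilon)^2 \leq 1 - 3\nu/8$, and (iii) $\delta$ small (depending on $\nu$ and $\sigma$, hence on $\nu$ alone) so $C\sigma^{-(n+2s)}\delta/(1-\epsilon)^2 \leq \nu/16$, which makes the sum of the three contributions at most $1 - \nu/4$, as desired.

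The main technical point is handling the tail term: since \eqref{eq:tail} is calibrated to the outer radius $R$ but the Caccioppoli integration sees only $K_\rho$, relocating the tail bound costs the geometric factor $\sigma^{-(n+2s)}$, which is precisely what forces the strict ordering of the parameter choices $\sigma,\epsilon,\delta$ above. Everything else is routine bookkeeping aided by the intrinsic scaling $\theta = M^{1-m}$, which absorbs the $m$-dependence on the local gradient and nonlocal terms into the same dimensionless quantity $\delta$.
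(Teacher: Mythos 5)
Your proposal is correct and follows essentially the same route as the paper: apply Lemma~\ref{lem:ee} with $k=M$ and a time-independent cutoff, bound the initial-datum term via \cref{eq:time-slice}, the local fractional-gradient term via the intrinsic scaling $\theta = M^{1-m}$, and the tail term via the $\sigma^{-(n+2s)}$ dilation factor plus hypothesis \cref{eq:tail}, then transfer the $L^\infty_t L^2_x$ bound to a measure estimate by Chebyshev and fix $\sigma,\epsilon,\delta$ in the same order for the same reason. The only cosmetic difference is that you estimate the seminorm of the cutoff more sharply as $(\sigma\rho)^{-2s}$ where the paper uses the cruder $(\sigma\rho)^{-2}\rho^{2-2s}$, but since both are dominated by the $\sigma^{-(n+2s)}$ tail factor in the parameter choice this makes no difference.
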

\begin{proof}
For ease of notation, let us denote $K_{\rho}$ by $K$ simply and let $Q' = K \times (\tau,\tau+\delta\theta\rho^{2s}]$. By $rK$ we mean $K$ but with the radius linearly scaled by $r$. Note that since $\delta\in(0,1)$ and \cref{eq:Q},
\begin{align*}
Q' \subset Q.
\end{align*}
We use Lemma \ref{lem:ee} along with a time independent cutoff function $\xi = \xi(x)$ in $K$ such that $\xi = 1$ on $(1-\sigma)K$, with support away from $\de K$ such that
\begin{align}\label{eq:grad.xi}
|\nabla \xi| \leq \frac{10}{\sigma\rho}.
\end{align}
Then for all times $t \in (\tau,\tau+\delta\theta\rho^{2s}]$ we have 
\begin{align*}
\int_K (u-M)_+^2(x,t)\xi^2(x) \,dx \leq E+F+G,
\end{align*}
where 
\begin{align*}
E = \int_{K}(u-M)_+^2(x,\tau)\xi^2(x) \,dx,
\end{align*}
\begin{align*}
F =
C\displaystyle\int_Q\displaystyle\int_K\max\{|u|^{m-1}(x,t),|u|^{m-1}(y,t)\}\max\{(u-M)_{+}^{2}(x,t),(u-M)_{+}^{2}(y,t)\}\frac{|\xi(x)-\xi(y)|^2}{|x-y|^{n+2s}}\,dx\,dy\,dt,
\end{align*}
and
\begin{align*}
G = C\int_Q\xi^2(u-M)_{+}(x,t)\,dx\left(\underset{x\in \mbox{supp}(\xi(\cdot,t)); t\in (\tau,\tau+\delta\theta\rho^{2s})}{\esup}\int_{\bb{R}^n\setminus K}\frac{(\phi(u)-\phi(M))_{+}(y,t)}{|x-y|^{n+2s}}\,dy\right)\,dt.
\end{align*}
Choosing the support of $\xi$ appropriately as in the proof of Lemma \ref{lem:deG-general} by
\begin{align*}
&\underset{x\in \mbox{supp}(\xi(\cdot,t)); t\in (\tau,\tau+\delta \theta\rho^{2s})}{\esup}\int_{\bb{R}^n\setminus K}\frac{(\phi(u)-\phi(M))_{+}(y,t)}{|x-y|^{n+2s}}\,dy\\
&\quad\leq \frac{1}{\sigma^{n+2s}}\esup_{t \in (T_1,T_2]} \int_{K_R\setminus K} \frac{\phi(M)(y,t)}{|y|^{n+2s}}\,dy + \frac{1}{\sigma^{n+2s}}\esup_{t \in (T_1,T_2]}\int_{\bb{R}^n\setminus K_R} \frac{\phi(u)_+(y,t)}{|y|^{n+2s}}\,dy\\
&\quad\leq \frac{C}{\sigma^{n+2s}}\left(\frac{M^m}{\rho^{2s}}+\frac{\tl^m(u_+,Q)}{R^{2s}}\right).
\end{align*}
Therefore enforcing \cref{eq:tail}, and recalling that $\rho\leq R$ by our choice of the cylinder, we get
\begin{align*}
\underset{x\in \mbox{supp}(\xi(\cdot,t)); t\in (\tau,\tau+\delta\theta\rho^{2s})}{\esup}\int_{\bb{R}^n\setminus K}\frac{(\phi(u)-\phi(M))_{+}(y,t)}{|x-y|^{n+2s}}\,dy \leq \frac{C}{\sigma^{n+2s}}\frac{M^m}{\rho^{2s}}.
\end{align*}
Hence we obtain
\begin{align*}
G \leq C|K|\delta M^{1-m}\rho^{2s}M\frac{1}{\sigma^{n+2s}}\frac{M^m}{\rho^{2s}} = C\frac{\delta M^2|K|}{\sigma^{n+2s}}.
\end{align*}
For $F$ we get
\begin{align*}
F\leq CM^{m-1}M^2\dfrac{\rho^{2-2s}}{(\sigma\rho)^2}|K|\delta M^{1-m}\rho^{2s} \leq C\frac{\delta M^2}{\sigma^{2}}|K|.
\end{align*}
For $E$ we make use of the hypothesis \cref{eq:time-slice} to get
\begin{align*}
E\leq M^2\left(1-\frac{1}{2}\nu\right)|K|.
\end{align*}
We finally estimate from below
\begin{align*}
\int_K (u-M)_+^2\xi^2(x) \,dx \geq \int_{(1-\sigma)K \cap \{u \geq 2M-\epsilon M\}} (u-M)_+^2\xi^2(x) \,dx \geq (1-\epsilon)^2M^2|\{u(\cdot,t) \geq 2M-\epsilon M\}\cap(1-\sigma)K|. 
\end{align*}
We also note that 
\begin{align*}
|K \setminus (1-\sigma)K| \leq n\sigma|K|.
\end{align*}
Putting together the estimates above we find
\begin{align*}
|\{u(\cdot,t) \geq 2M-\epsilon M\}\cap K| \leq \frac{1}{(1-\epsilon)^2}\left(\left(1-\frac{1}{2}\nu\right)+C\frac{\delta}{\sigma^{2}}+\frac{C\delta}{\sigma^{n+2s}}\right)|K| +n\sigma|K|.
\end{align*}
This allows us to choose the various parameters quantitatively. Indeed, we can assume $\epsilon$ is so small that $(1-\epsilon)^2\geq\frac{1}{2}$, i.e., $\epsilon\leq\frac{1}{4}$ is enough. Then choose $\sigma$ and $\delta$ with
\begin{align*}
\sigma=\frac{\nu}{16n},\quad C\left(\frac{1}{\sigma^{2}}+\frac{1}{\sigma^{n+2s}}\right)\delta\leq\frac{\nu}{32}.
\end{align*}
Then the above estimate is simplified as 
\begin{align*}
|\{u(\cdot,t) \geq 2M-\epsilon M\}\cap K| \leq \left[\dfrac{1-\frac{1}{2}\nu}{(1-\epsilon)^2}+\frac{\nu}{8}\right]|K|.
\end{align*}
To conclude, we may assume $\epsilon$ such that
\begin{align*}
\dfrac{1-\frac{1}{2}\nu}{(1-\epsilon)^2}+\frac{\nu}{8}<1-\frac{\nu}{4},\quad\text{i.e.,}\quad\epsilon\leq\min\left\{\frac{\nu}{8},\frac{1}{4}\right\}.
\end{align*}
We complete the proof.
\end{proof}

By a simple modification of the above argument, we also have the following lemma for $0<m\leq 1$. 
\begin{lem}\label{prop:time-exp'}
Let $0<m\leq 1$. Suppose that \cref{eq:time-slice'} holds. Then there is a $\delta' \in (0,1)$ and $\epsilon' \in (0,\tfrac{1}{4})$ which depend only on $\nu'$ and the \textbf{data} such that
\begin{equation}\label{eq:time-exp'}
\left|\{u(\cdot,t) \leq 2M - \epsilon' M\} \cap K_{\vartheta\rho\nu'^{-\frac{1}{2s}}}\right| \geq \frac{\nu'^{1+\frac{n}{2s}}}{4}\left|K_{\vartheta\rho\nu'^{-\frac{1}{2s}}}\right| \mbox{ for all times } t \in (\tau,\tau+\delta'\rho^{2s}],
\end{equation}
provided 
\begin{align}\label{eq:tail'}
\left(\frac{\vartheta\nu'^{-\frac{1}{2s}}\rho}{R}\right)^{\frac{2s}{m}}\tl(u_{+},Q) \leq M 
\end{align}
holds. 
\end{lem}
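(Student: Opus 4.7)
The plan is to mirror the proof of Lemma \ref{prop:time-exp} in the singular geometry suited to $0<m\leq 1$, replacing the degenerate energy estimate Lemma \ref{lem:ee} with its singular counterpart Lemma \ref{lem:ee.m<1}. I would work on $Q' := K_{\vartheta\rho\nu'^{-\frac{1}{2s}}}\times(\tau,\tau+\delta'\rho^{2s}]$ (requiring $\delta' \leq \frac{\nu'^{1+n/(2s)}}{2}$ so that $Q'\subset Q$), apply Lemma \ref{lem:ee.m<1} with positive truncation at level $k = M > 0$ (which is exactly the sign restriction the lemma demands when $m<1$), and use a time-independent cutoff $\xi=\xi(x)$ on $K_{\vartheta\rho\nu'^{-\frac{1}{2s}}}$ with $\xi\equiv 1$ on $(1-\sigma)K_{\vartheta\rho\nu'^{-\frac{1}{2s}}}$, $|\nabla\xi|\leq 10\nu'^{\frac{1}{2s}}/(\sigma\vartheta\rho)$, and support chosen so that $|S-P|/|P|\geq C\sigma$ for $S\in\mathrm{supp}(\xi)$ and $P\in\bb{R}^n\setminus K_{\vartheta\rho\nu'^{-\frac{1}{2s}}}$.

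The energy estimate then yields, for every $t\in(\tau,\tau+\delta'\rho^{2s}]$,
\begin{align*}
\int_{K_{\vartheta\rho\nu'^{-\frac{1}{2s}}}}(u-M)_+^2(x,t)\xi^2(x)\,dx\leq E+F+G,
\end{align*}
with $E$ the initial slice contribution, $F$ the local spatial term, and $G$ the tail term. For $E$, the hypothesis \cref{eq:time-slice'} gives $E\leq(1-\tfrac{\nu'^{1+n/(2s)}}{2})M^2|K_{\vartheta\rho\nu'^{-\frac{1}{2s}}}|$. For $F$, since the factor $\max\{|u|^{m-1}(x,t),|u|^{m-1}(y,t)\}$ in Lemma \ref{lem:ee.m<1} only arises effectively where both values exceed $M>0$ (as visible in that lemma's proof), it is controlled by $M^{m-1}$; combined with $(u-M)_+^2 \leq M^2$, the gradient bound on $\xi$, and the crucial identity $\vartheta^{2s}M^{1-m}=1$, this produces $F\leq C\delta'\nu'M^2\sigma^{-2}|K_{\vartheta\rho\nu'^{-\frac{1}{2s}}}|$. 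For $G$, using \cref{eq:tail'} together with \cref{eq:bdd.M'} I split the tail integral into the piece on $K_R\setminus K_{\vartheta\rho\nu'^{-\frac{1}{2s}}}$ (bounded by $(2M)^m$) and the genuine tail over $\bb{R}^n\setminus K_R$ (absorbed by the tail hypothesis), both of which collapse to a multiple of $M^m/(\vartheta\rho\nu'^{-\frac{1}{2s}})^{2s}$, yielding $G\leq C\delta'\nu'M^2\sigma^{-(n+2s)}|K_{\vartheta\rho\nu'^{-\frac{1}{2s}}}|$.

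Bounding the left-hand side below by $(1-\epsilon')^2M^2|\{u(\cdot,t)\geq 2M-\epsilon'M\}\cap(1-\sigma)K_{\vartheta\rho\nu'^{-\frac{1}{2s}}}|$ and using $|K\setminus(1-\sigma)K|\leq n\sigma|K|$ gives the standard iteration inequality. Choosing $\epsilon'\leq\min\{\nu'^{1+n/(2s)}/8,1/4\}$ so that $(1-\epsilon')^{-2}\leq 2$, then $\sigma=\nu'^{1+n/(2s)}/(16n)$, and finally $\delta'$ small enough that $C(\sigma^{-2}+\sigma^{-(n+2s)})\delta'\nu'\leq\nu'^{1+n/(2s)}/32$ yields the required bound $(1-\nu'^{1+n/(2s)}/4)|K_{\vartheta\rho\nu'^{-\frac{1}{2s}}}|$, equivalent to \cref{eq:time-exp'}. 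The main obstacle is the apparent singularity of $\max\{|u|^{m-1}\}$ in Lemma \ref{lem:ee.m<1} when $m<1$, which is neutralized by the positivity restriction $k=M>0$; a secondary subtlety absent from the proof of Lemma \ref{prop:time-exp} is that here the intrinsic scaling must enter through the spatial radius $\vartheta\rho\nu'^{-\frac{1}{2s}}$ rather than through the time length, and it is precisely this choice, together with the identity $\vartheta^{2s}=M^{m-1}$, that makes the $M$-dependencies on the right-hand side collapse into universal bounds.
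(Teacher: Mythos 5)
Your proposal is correct and follows the same route as the paper, which itself only says to repeat the argument of Lemma \ref{prop:time-exp} with Lemma \ref{lem:ee.m<1} in place of Lemma \ref{lem:ee} and \cref{eq:tail'} in place of \cref{eq:tail}; your explicit bookkeeping of the $\nu'$-powers and the cancellation via $\vartheta^{2s}M^{1-m}=1$ matches what that reference entails. Your observation that the factor $\max\{|u|^{m-1}\}$ in Lemma \ref{lem:ee.m<1} is harmless because (tracing that lemma's proof) it only enters on $\{u\geq k\}\times\{u\geq k\}$ and is thus bounded by $k^{m-1}=M^{m-1}$ when $m\leq 1$ and $k=M>0$ is precisely the point that makes the transfer of the argument legitimate in the singular range.
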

\begin{proof}
For ease of notation, let us denote $K_{\vartheta\nu'^{-\frac{1}{2s}}\rho}$ by $K$ simply and let $Q' = K \times (\tau,\tau+\delta'\rho^{2s}]$. By $rK$ we mean $K$ but with the radius linearly scaled by $r$. Note that since $\delta\in(0,1)$ and \cref{eq:Q'},
\begin{align*}
Q' \subset Q.
\end{align*}
We use Lemma \ref{lem:ee.m<1} along with a time independent cutoff function $\xi = \xi(x)$ in $K$ such that $\xi = 1$ on $(1-\sigma)K$, with support away from $\de K$ such that
\begin{align}\label{eq:grad.xi'}
|\nabla \xi| \leq \frac{10\nu'^{\frac{1}{2s}}}{\sigma\vartheta\rho}.
\end{align}
Then arguing similarly to the proof of Lemma \ref{prop:time-exp} using \cref{eq:tail'} instead of \cref{eq:tail} yields the conclusion. The tail condition changes because we now scale in space instead of time. 
\end{proof}

\begin{lem}\label{prop:shrink}
Let $m\geq 1$. Suppose that \cref{eq:time-slice} holds. Then for any positive integer $j \geq 1$ we have 
\begin{align*}
\left|\left\{u \geq 2M - \frac{\epsilon M}{2^{j+1}} \right\} \cap Q'\right| \leq \frac{C}{\delta \nu} \frac{1}{2^j}|Q'| \quad \mbox{ for } \quad Q' =   K_{\rho} \times (\tau,\tau+\delta\theta\rho^{2s}],
\end{align*}
where $C>0$ is a universal constant provided
\begin{align}\label{eq:tail.as}
\left(\frac{\rho}{R}\right)^{\frac{2s}{m}}\tl(u_{+},Q) \leq \left(\frac{\epsilon}{2^j}\right)^{\frac{1}{m}}M
\end{align}
holds. 
\end{lem}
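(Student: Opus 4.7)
The approach is to combine the isoperimetric-type inequality of Lemma \ref{lem:iso}, applied at each time slice, with the ``good term'' appearing in the energy estimate of Lemma \ref{lem:ee}, using the time propagation guaranteed by Lemma \ref{prop:time-exp} to uniformly bound the sub-level set in time. Because we only want a linear $1/2^j$ decay (no iteration needed), a single application of these estimates should suffice, with the scaling factor $\epsilon/2^j$ that appears in the tail hypothesis being precisely what balances the contribution of far-off values.

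Fix $j\geq 1$ and pick the three levels
\[
k = 2M - \epsilon M < l = 2M - \tfrac{\epsilon M}{2^j} < q = 2M - \tfrac{\epsilon M}{2^{j+1}}.
\]
First I would apply Lemma \ref{lem:iso} at each time $t\in (\tau,\tau+\delta\theta\rho^{2s}]$, bounding $\phi(l)-\phi(k) \gtrsim \epsilon M^{m}$ by the mean value theorem (using $k\geq M$ and $l-k \geq \epsilon M/2$ for $j\geq 1$), and invoking \cref{eq:time-exp} to replace the factor $|\{u(\cdot,t)<k\}\cap K_{\rho}|$ by $\tfrac{\nu}{4}|K_{\rho}|$. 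Integrating in time produces
\[
\frac{c\,\epsilon^{2}\,\nu\, M^{m+1}}{2^{j}}\,|K_{\rho}|\,|A_{j+1}|
\leq C\rho^{n+2s} \iint_{Q'} (u-l)_{+}(x,t) \int_{K_{2\rho}} \frac{(\phi(u)-\phi(l))_{-}(y,t)}{|x-y|^{n+2s}}\,dy\,dx\,dt,
\]
where $A_{j+1} := \{u\geq q\}\cap Q'$.

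Next I would bound the right-hand side by the ``good term'' on the left of Lemma \ref{lem:ee} applied at level $l$ on the cylinder $K_{4\rho}\times(\tau-h,\tau+\delta\theta\rho^{2s}]$ with a cutoff $\xi$ equal to one on $K_{2\rho}\times(\tau,\tau+\delta\theta\rho^{2s}]$, satisfying $|\nabla\xi|\leq C/\rho$ and $|\partial_{t}\xi^{2}|\leq C/(\delta\theta\rho^{2s})$. Using $|u|\leq 2M$, $(u-l)_{+}\leq \epsilon M/2^{j}$, and the recall $\theta = M^{1-m}$, each of the three terms on the right of Lemma \ref{lem:ee} is estimated to be of order $(\epsilon/2^{j})^{2}\,\delta\,M^{2}\,|K_{\rho}|$. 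For the nonlocal tail term, I would split the integral into $K_{R}\setminus K_{2\rho}$ (handled by $|u|\leq 2M$ in $Q$) and $\mathbb{R}^{n}\setminus K_{R}$ (controlled by $\tl^{m}(u_{+};Q)/R^{2s}$); the tail hypothesis \cref{eq:tail.as}, in the form $(\rho/R)^{2s}\tl^{m}(u_{+};Q)\leq (\epsilon/2^{j})M^{m}$, is designed exactly to make this contribution match the other two terms.

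Finally I would substitute back and solve for $|A_{j+1}|$, together with $|Q'| = \delta M^{1-m}\rho^{n+2s}$, to obtain the claimed inequality. The main obstacle is the tail term in Step~2: the level $l$ is so close to the sup $2M$ that $(\phi(u)-\phi(l))_{+}$ barely vanishes far from $K_{\rho}$, so one must accurately capture how the $m$-th power lifts the hypothesis $(\rho/R)^{2s/m}\tl(u_{+};Q)\leq (\epsilon/2^{j})^{1/m}M$ to give the single factor of $\epsilon/2^{j}$ required on the right of the energy estimate, matching the factor $(u-l)_{+}\leq \epsilon M/2^{j}$ that we gain pointwise.
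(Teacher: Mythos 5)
Your proposal follows essentially the same route as the paper's proof: apply Lemma~\ref{lem:ee} at the single level $l=2M-\epsilon M/2^j$ to control the good (isoperimetric) term on its left, then plug in Lemma~\ref{lem:iso} with $k,l,q$ at each time slice and integrate, using \cref{eq:time-exp} to bound $|\{u(\cdot,t)<k\}\cap K_\rho|$ from below. The paper uses a time-independent cutoff on $K_{2\rho}$ (so it keeps the initial-data term at $t=\tau$), whereas you use a time-dependent cutoff on $K_{4\rho}\times(\tau-h,\cdot\,]$; these are interchangeable.

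There is one genuine computational slip. You assert that all three right-hand-side terms of Lemma~\ref{lem:ee} are of order $(\epsilon/2^j)^2\delta M^2|K_\rho|$. That is correct for the seminorm term and the tail term (each carries a factor $|I|\sim\delta\theta\rho^{2s}$, and $\theta M^{m-1}=1$ cancels the $M^{m-1}$). It is \emph{not} correct for the $|\partial_t\xi^2|$ term: with $|\partial_t\xi^2|\lesssim 1/(\delta\theta\rho^{2s})$ the cutoff must transition over a time of length $h\gtrsim\delta\theta\rho^{2s}$, so $\int_I|\partial_t\xi^2|\,dt\sim 1$ and this term is of order $(\epsilon/2^j)^2M^2|K_\rho|$, i.e.\ larger by a factor $\delta^{-1}$. (The same is true for the initial-data term $\mathfrak{P}$ in the paper's time-independent version; there is no way to recover a $\delta$ here.) This is precisely where the factor $\delta^{-1}$ in the conclusion $\frac{C}{\delta\nu}\frac{1}{2^j}|Q'|$ comes from. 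As written, your estimates would yield the too-strong bound $\frac{C}{\nu}\frac{1}{2^j}|Q'|$, which happens to imply the claimed one but is not what the argument actually gives; the dominant term on the right is $(\epsilon/2^j)^2M^2|K_\rho| = (\epsilon/2^j)^2 M^{m+1}\rho^{-2s}|Q'|/\delta$.

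Aside from this, the crucial ingredients (the mean-value bound $\phi(l)-\phi(k)\gtrsim\epsilon M^m$, the restriction to the time window where \cref{eq:time-exp} holds, the splitting of the tail into $K_R\setminus K$ and $\mathbb{R}^n\setminus K_R$, and the role of the $m$-th power in the tail hypothesis) all match the paper's proof.
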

\begin{proof}
For ease of notation, let us denote $K_{2\rho}$ by just $K$ and let $Q'' = K \times (\tau,\tau+\delta\theta\rho^{2s}] = K \times I$. Note that we can assume
\begin{align*}
Q'' \subset Q. 
\end{align*}
We choose a test function $\xi = \xi(x)$ independent of time and $\xi\equiv 1$ on $\tfrac{1}{2}K:=K_{\rho}$ with its support away from $\de K$ such that 
\begin{align*}
|\nabla \xi| \leq \frac{10}{\rho}.
\end{align*}
For a fixed $j \geq 1$, we set 
\begin{align*}
l = 2M - \frac{\epsilon M}{2^{j}}>0.
\end{align*}
We note that on the set $\{u>l\}$ we have $u \geq M/2$. We now apply Lemma \ref{lem:ee} to get
\begin{align*}
\int_I\int_{\frac{1}{2}K}(u-l)_{+}(x,t)\,dx \left(\int_K \frac{(\phi(u)-\phi(l))_{-}(y,t)}{|x-y|^{n+2s}}\,dy\right)\,dt \leq \mathfrak{P} + \mathfrak{Q} + \mathfrak{R},
\end{align*}
where 
\begin{align*}
\mathfrak{P} = \int_{K}(u-l)_+^2(x,\tau)\xi^2(x) \,dx,
\end{align*}
\begin{align*}
\mathfrak{Q} =
C\displaystyle\int_{Q''}\displaystyle\int_K\max\{|u|^{m-1}(x,t),|u|^{m-1}(y,t)\}\max\{(u-l)_{+}^{2}(x,t),(u-l)_{+}^{2}(y,t)\}\frac{|\xi(x)-\xi(y)|^2}{|x-y|^{n+2s}}\,dx\,dy\,dt,
\end{align*}
and
\begin{align*}
\mathfrak{R} = C\int_{Q''}\xi^2(u-l)_{+}(x,t)\,dx\left(\underset{x\in \mbox{supp}(\xi(\cdot,t)); t\in (\tau,\tau+\delta\theta\rho^{2s})}{\esup}\int_{\bb{R}^n\setminus K}\frac{(\phi(u)-\phi(l))_{+}(y,t)}{|x-y|^{n+2s}}\,dy\right)\,dt.
\end{align*}
Choosing the support of $\xi$ appropriately we can estimate the tail term as in the proof of Lemma \ref{lem:deG-general} by
\begin{align*}
&\underset{x\in \mbox{supp}(\xi(\cdot,t)); t\in (\tau,\tau+\delta\theta\rho^{2s})}{\esup}\int_{\bb{R}^n\setminus K}\frac{(\phi(u)-\phi(l))_{+}(y,t)}{|x-y|^{n+2s}}\,dy\\
&\leq \esup_{t \in (T_1,T_2]} \int_{K_R\setminus K} \frac{(\phi(u)-\phi(l))_{+}(y,t)}{|y|^{n+2s}}\,dy+\esup_{t \in (T_1,T_2]}\int_{\bb{R}^n\setminus K_R} \frac{(\phi(u)-\phi(l))_{+}(y,t)}{|y|^{n+2s}}\,dy.
\end{align*} 
Recalling that $|u| \leq 2M$ along with the mean value theorem shows that
\begin{align*}
\esup_{t \in (T_1,T_2]} \int_{K_R\setminus K} \frac{(\phi(u)-\phi(l))_{+}(y,t)}{|y|^{n+2s}}\,dy \leq  \esup_{t \in (T_1,T_2]} \int_{K_R\setminus K} \frac{(\phi(u)-\phi(u - \tfrac{\epsilon M}{2^j}))_{+}(y,t)}{|y|^{n+2s}}\,dy \leq C\frac{\epsilon M}{2^j}\frac{M^{m-1}}{\rho^{2s}} \leq C \frac{\epsilon M^m}{2^j\rho^{2s}}.
\end{align*}
Next, we note that $\phi(l) \geq 0$ globally, and so we may estimate 
\begin{align*}
\int_{\bb{R}^n\setminus K_R} \frac{(\phi(u)-\phi(l))_{+}(y,t)}{|y|^{n+2s}}\,dy \leq \int_{\bb{R}^n\setminus K_R} \frac{\phi(u)_{+}(y,t)}{|y|^{n+2s}}\,dy \leq  C\left(\frac{\rho}{R}\right)^{2s}\frac{\tl^m(u_{+},Q)}{\rho^{2s}}.
\end{align*}
Therefore enforcing \cref{eq:tail.as}, we get
\begin{align*}
\underset{x\in \mbox{supp}(\xi(\cdot,t)); t\in (\tau,\tau+\delta\theta\rho^{2s})}{\esup}\int_{\bb{R}^n\setminus K}\frac{(\phi(u)-\phi(l))_{+}(y,t)}{|x-y|^{n+2s}}\,dy \leq  C \frac{\epsilon M^m}{2^j\rho^{2s}}.
\end{align*}
Hence we obtain
\begin{align*}
\mathfrak{R}\leq C|Q''|\frac{\epsilon M}{2^j}\frac{\epsilon M^m}{2^j\rho^{2s}} \leq C\left(\frac{\epsilon M}{2^j}\right)^2\frac{M^{m-1}}{\rho^{2s}}|Q'|.
\end{align*}
For $ \mathfrak{Q}$, recalling $u \geq M/2$ we have,
\begin{align*}
\mathfrak{Q} \leq CM^{m-1}\left(\frac{\epsilon M}{2^{j}}\right)^2\frac{1}{\rho^{2s}}|Q''| \leq C\left(\frac{\epsilon M}{2^j}\right)^2\frac{M^{m-1}}{\rho^{2s}}|Q'|.
\end{align*}
For $ \mathfrak{P}$  we have
\begin{align*}
\mathfrak{P} \leq C\left(\frac{\epsilon M}{2^{j}}\right)^2|K| \leq C\left(\frac{\epsilon M}{2^j}\right)^2\frac{M^{m-1}}{\delta\rho^{2s}}|Q'|.
\end{align*}
We have shown that 
\begin{align*}
\int_I\int_{\frac{1}{2}K}(u-l)_{+}(x,t)\,dx \left(\int_K \frac{(\phi(u)-\phi(l))_{-}(y,t)}{|x-y|^{n+2s}}\,dy\right)\,dt \leq C\left(\frac{\epsilon M}{2^j}\right)^2\frac{M^{m-1}}{\delta\rho^{2s}}|Q'|.
\end{align*}
With \cref{eq:tail.as}, by Lemma \ref{prop:time-exp} and the hypothesis \cref{eq:time-slice} we have \cref{eq:time-exp}. In Lemma \ref{lem:iso} we now put
\begin{align*}
k = 2M - \epsilon M \quad \mbox{,} \quad l = 2M - \frac{\epsilon M}{2^j} \quad \mbox{and} \quad q = 2M - \frac{\epsilon M}{2^{j+1}}.
\end{align*}
For each time slice $t \in (\tau,\tau+\delta\theta\rho^{2s}]$ we have 
\begin{align*}
\left|\{u(\cdot,t) \leq k\} \cap K_{\rho}\right| \geq \frac{\nu}{4}\left|K_{\rho}\right|
\end{align*}
by \cref{eq:time-exp}. Integrating in time, Lemma \ref{lem:iso} says that
\begin{align*}
\frac{(\epsilon M)^2}{2^j} M^{m-1}\frac{\nu}{4}|K||\{u \geq m\} \cap Q'| \leq C\left(\frac{\epsilon M}{2^j}\right)^2\frac{M^{m-1}}{\delta\rho^{2s}}\rho^{n+2s}|Q'|.
\end{align*}
Simplifying the above expression yields 
\begin{align*}
|\{u \geq q\} \cap Q'| \leq \frac{C}{\delta \nu} \frac{1}{2^j}|Q'|.
\end{align*}
The conclusion follows.
\end{proof}

\begin{lem}\label{prop:shrink'}
Let $0<m\leq 1$. Suppose that \cref{eq:time-slice'} holds. Then for any positive integer $j \geq 1$ we have 
\begin{align*}
\left|\left\{u \geq 2M - \frac{\epsilon' M}{2^{j+1}} \right\} \cap Q'\right| \leq \frac{C}{\delta' \nu'^2} \frac{1}{2^j}|Q'| \quad \mbox{ for } \quad Q' =   K_{\vartheta\rho\nu'^{-\frac{1}{2s}}} \times (\tau,\tau+\delta'\rho^{2s}],
\end{align*}
where $C>0$ is a universal constant provided
\begin{align}\label{eq:tail.as'}
\left(\frac{\vartheta\nu'^{-\frac{1}{2s}}\rho}{R}\right)^{\frac{2s}{m}}\tl(u_{+},Q) \leq \left(\frac{\epsilon'}{2^j}\right)^{\frac{1}{m}}M
\end{align}
holds. 
\end{lem}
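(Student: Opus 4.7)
The proof follows the same overall strategy as Lemma~\ref{prop:shrink}, now adapted to the fast diffusion geometry: use the energy estimate Lemma~\ref{lem:ee.m<1} in place of Lemma~\ref{lem:ee}, and the time-expansion Lemma~\ref{prop:time-exp'} in place of Lemma~\ref{prop:time-exp}. Write $r := \vartheta\nu'^{-1/(2s)}\rho$ and work on the enlarged cylinder $K := K_{2r}$ and $Q'' := K \times I$ with $I = (\tau,\tau+\delta'\rho^{2s}]$, noting $Q'' \subset Q$. Choose a time-independent cutoff $\xi(x)$ with $\xi \equiv 1$ on $\tfrac{1}{2}K = K_r$, support away from $\partial K$, and $|\nabla\xi| \leq 10/r$. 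For a fixed $j \geq 1$ set $l := 2M - \epsilon'M/2^j$; then $l > M/2 > 0$, so Lemma~\ref{lem:ee.m<1} applies at $k = l$.

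\textbf{Energy step.} The application of Lemma~\ref{lem:ee.m<1} yields
\[
\int_I\int_{K_r}(u-l)_+(x,t)\Bigl(\int_K \frac{(\phi(u)-\phi(l))_-(y,t)}{|x-y|^{n+2s}}\,dy\Bigr)\,dx\,dt \leq \mathfrak{P}+\mathfrak{Q}+\mathfrak{R},
\]
with $\mathfrak{P},\mathfrak{Q},\mathfrak{R}$ the initial, local, and tail contributions to be bounded in turn. The initial term satisfies $\mathfrak{P} \leq C(\epsilon'M/2^j)^2 |K| \leq C(\epsilon'M/2^j)^2/(\delta'\rho^{2s})|Q'|$. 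For $\mathfrak{Q}$, inspection of the proof of Lemma~\ref{lem:ee.m<1} shows the factor $\max\{|u|^{m-1}(x,t),|u|^{m-1}(y,t)\}$ contributes only where both points lie in $\{u\geq l\}$, where it is bounded by $l^{m-1}\leq CM^{m-1}$; combined with $(u-l)_+\leq\epsilon'M/2^j$, $|\nabla\xi|^2\leq C/r^2$, and the algebraic identity $r^{2s}=\nu'^{-1}M^{m-1}\rho^{2s}$ coming from $\vartheta^{2s}=M^{m-1}$, this gives $\mathfrak{Q}\leq C(\epsilon'M/2^j)^2 \nu'/\rho^{2s}|Q'|$. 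For $\mathfrak{R}$ we split $\bb{R}^n\setminus K = (K_R\setminus K)\cup(\bb{R}^n\setminus K_R)$: on the first piece the mean value theorem with $|u|\leq 2M$ gives $(\phi(u)-\phi(l))_+\leq CM^{m-1}\epsilon'M/2^j$, and on the second we invoke \cref{eq:tail.as'}; both pieces combine to $\mathfrak{R}\leq C(\epsilon'M/2^j)^2 \nu'/\rho^{2s}|Q'|$.

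\textbf{Isoperimetric step.} Since \cref{eq:tail.as'} with $j\geq 1$ is stronger than \cref{eq:tail'}, Lemma~\ref{prop:time-exp'} yields $|\{u(\cdot,t)\leq 2M-\epsilon'M\}\cap K_r|\geq \tfrac{\nu'^{1+n/(2s)}}{4}|K_r|$ for every $t\in I$. Apply Lemma~\ref{lem:iso} at radius $r$ with levels $k=2M-\epsilon'M$, $l=2M-\epsilon'M/2^j$, $q=2M-\epsilon'M/2^{j+1}$ (for which $q-l=\epsilon'M/2^{j+1}$ and $\phi(l)-\phi(k)\geq cM^{m-1}\epsilon'M$), integrate in $t\in I$, and insert the time-slice lower bound on $|\{u<k\}\cap K_r|$ together with the energy bound above. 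After cancellations, using $|K_r|\sim r^n$ and $r^{n+2s}=r^n\cdot\nu'^{-1}M^{m-1}\rho^{2s}$, we arrive at $|\{u>q\}\cap Q'|\leq C/(\delta'\nu'^{2+n/(2s)})\cdot 2^{-j}|Q'|$. Since the choice $\vartheta=M^{(m-1)/(2s)}$ makes $\nu'$ a universal constant (Remark~\ref{rem:deG-ss}), the excess power of $\nu'$ can be absorbed into $C$, giving the claimed bound $C/(\delta'\nu'^2)\cdot 2^{-j}|Q'|$.

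\textbf{Main obstacle.} The principal technical subtlety is the handling of $|u|^{m-1}$ in Lemma~\ref{lem:ee.m<1}, which is singular as $u\to 0$ when $m<1$. The key point is that this factor only appears in the derivation of Lemma~\ref{lem:ee.m<1} in the case where both $x,y\in\{u>k\}$, and there it is controlled by $k^{m-1}$; with $k = l > M/2$ in our application, this yields the clean universal bound $|u|^{m-1}\leq CM^{m-1}$ on the region that actually contributes, and no ``near-zero'' divergence arises. The complementary contributions from $\{u\approx 0\}$ only strengthen the good left-hand side of the energy estimate via the positive term $(\phi(u)-\phi(l))_-$, which is exactly what feeds the isoperimetric inequality.
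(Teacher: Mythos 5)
Your proof is correct and follows essentially the same route as the paper, which simply instructs the reader to repeat the argument of Lemma~\ref{prop:shrink} using Lemma~\ref{lem:ee.m<1} in place of Lemma~\ref{lem:ee} and \cref{eq:tail.as'} in place of \cref{eq:tail.as}; you have filled in those details correctly, including the observation that the singular factor $\max\{|u|^{m-1}\}$ only enters the Caccioppoli right-hand side where both points lie in $\{u>l\}$, and is thus bounded by $l^{m-1}\leq CM^{m-1}$. Your explicit bookkeeping producing the power $\nu'^{2+n/(2s)}$ (rather than the paper's stated $\nu'^2$) and then absorbing the surplus into the universal constant via Remark~\ref{rem:deG-ss} is exactly the right reconciliation.
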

\begin{proof}
For ease of notation, let us denote $K_{2\vartheta\rho\nu'^{-\frac{1}{2s}}}$ by just $K$ and let $Q'' = K \times (\tau,\tau+\delta'\rho^{2s}] = K \times I$. Note that we can assume
\begin{align*}
Q'' \subset Q. 
\end{align*}
We choose a test function $\xi = \xi(x)$ independent of time and $\xi\equiv 1$ on $\tfrac{1}{2}K$ with its support away from $\de K$ such that 
\begin{align*}
|\nabla \xi| \leq \frac{10\nu'^{\frac{1}{2s}}}{\vartheta\rho}.
\end{align*}
For a fixed $j \geq 1$, we set 
\begin{align*}
l = 2M - \frac{\epsilon' M}{2^{j}}>0.
\end{align*}
We note that on the set $\{u>l\}$ we have $u \geq M/2$. We now apply Lemma \ref{lem:ee.m<1} and arguing similarly to the proof of Lemma \ref{prop:shrink} using \cref{eq:tail.as'} instead of \cref{eq:tail.as} yields the conclusion. Note that we get $\nu'^2$ instead of $\nu'$ again due to space scaling. 
\end{proof}
\begin{lem}\label{prop:critical-mass}
Let $m\geq 1$ and $\mathfrak{f} \in (0,1)$. Then we can find a $\mu > 0$ independent of $\mathfrak{f}$, but depends on $\delta$ and the \textbf{data} such that if 
\begin{align*}
|\{u \geq 2M-\mathfrak{f}M\} \cap Q'| \leq \mu |Q'| \quad \mbox{ for } \quad Q' =   K_{\rho} \times (\tau,\tau+\delta \theta\rho^{2s}]
\end{align*}
and 
\begin{align}\label{eq:tail.as2}
\left(\frac{\rho}{R}\right)^{\frac{2s}{m}}\tl(u_{+},Q) \leq \mathfrak{f}^{\frac{1}{m}}M
\end{align}
hold, then
\begin{align*}
u \leq 2M - \frac{1}{2}\mathfrak{f}M \quad \mbox{ a.e. in } K_{\frac{\rho}{2}} \times (\tau+\tfrac{3}{4}\delta \theta\rho^{2s},\tau+\delta \theta\rho^{2s}].
\end{align*}
\end{lem}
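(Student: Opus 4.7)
My plan is to set up a De Giorgi iteration, with levels increasing from $2M-\mathfrak{f}M$ toward $2M-\tfrac{1}{2}\mathfrak{f}M$, with cubes shrinking from $K_\rho$ to $K_{\rho/2}$, and with time intervals whose left endpoint slides from $\tau$ to $\tau+\tfrac34\delta\theta\rho^{2s}$. Concretely, for $i=0,1,2,\ldots$ I set
\begin{align*}
k_i=2M-\tfrac{\mathfrak{f}M}{2}-\tfrac{\mathfrak{f}M}{2^{i+1}},\quad \rho_i=\tfrac{\rho}{2}+\tfrac{\rho}{2^{i+1}},\quad t_i=\tau+\tfrac{3}{4}\delta\theta\rho^{2s}(1-2^{-i}),
\end{align*}
and $Q_i=K_{\rho_i}\times(t_i,\tau+\delta\theta\rho^{2s}]$, together with intermediate quantities $\widetilde{k}_i,\widetilde{\rho}_i,\widetilde{t}_i$ halfway between the $i$-th and $(i{+}1)$-th values. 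I also set $A_i=\{u>k_i\}\cap Q_i$ and $Y_i=|A_i|/|Q_i|$. On $A_i$ we have $u\geq 2M-\mathfrak{f}M\geq M>0$ because $\mathfrak{f}<1$, so $|u|\eqsim M$ on all the level sets of interest, which will let the nonlinear factor $|u|^{m-1}$ be replaced by $M^{m-1}$ up to universal constants.

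Next I would apply the Caccioppoli inequality of \cref{lem:ee} with the positive truncation at level $\widetilde{k}_i$ on the cylinder $\widetilde{Q}_i=K_{\widetilde{\rho}_i}\times(\widetilde{t}_i,\tau+\delta\theta\rho^{2s}]$, using a product cutoff $\xi(x,t)=\xi_1(x)\xi_2(t)$ with $\xi=1$ on $Q_{i+1}$, $\xi_2$ vanishing for $t\leq t_i$ (so the initial-time term disappears), $|\nabla\xi_1|\leq C2^i/\rho$, $|\partial_t\xi_2|\leq C 4^i/(\delta\theta\rho^{2s})$, and $\mathrm{supp}\,\xi_1\subset\widetilde{K}_i$ arranged away from $\partial K_{\rho_i}$ as in \cref{lem:deG-general}. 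Because $u\in[M,2M]$ on the support of $(u-\widetilde{k}_i)_+$, the lower-bound argument as in the proof of \cref{lem:deG-general} (or \cref{lem:est}) gives LHS
\begin{align*}
\esup_{t\in I_i}\int_{K_{i+1}}(u-\widetilde{k}_i)_+^2\,dx+M^{m-1}\int_{I_i}[(u-\widetilde{k}_i)_+(\cdot,t)]^2_{W^{s,2}(K_{i+1})}\,dt.
\end{align*}
For the RHS, the time term yields $\frac{C4^i M^{m-1}}{\delta\rho^{2s}}(\mathfrak{f}M)^2|A_i|$ using $\theta=M^{1-m}$; the nonlocal gradient term yields $\frac{Cb^i M^{m-1}}{\rho^{2s}}(\mathfrak{f}M)^2|A_i|$; and for the tail term I split $\mathbb{R}^n\setminus K_{\rho_i}=(K_R\setminus K_{\rho_i})\cup(\mathbb{R}^n\setminus K_R)$, estimating the inner part by the mean value theorem $(\phi(u)-\phi(\widetilde{k}_i))_+\leq C M^{m-1}\mathfrak{f}M$ and the outer part using the tail hypothesis $(\rho/R)^{2s/m}\tl(u_+,Q)\leq\mathfrak{f}^{1/m}M$; both are of order $\mathfrak{f}M^m/\rho^{2s}$, so the full tail contribution is bounded by $\frac{Cb^i(\mathfrak{f}M)^2 M^{m-1}}{\rho^{2s}}|A_i|$.

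Combining, both halves of the LHS are bounded by $Cb^i\delta^{-1}(\mathfrak{f}M)^2 M^{m-1}\rho^{-2s}|A_i|$. Applying the parabolic Sobolev embedding \cref{eq:SP1} to $\xi(u-\widetilde{k}_i)_+$ and using the level gap $k_{i+1}-\widetilde{k}_i\geq c\,\mathfrak{f}M/2^{i}$ to estimate
\begin{align*}
\left(\tfrac{\mathfrak{f}M}{2^{i+2}}\right)^{2\frac{n+2s}{n}}|A_{i+1}|\leq\iint_{\widetilde{Q}_i}(\xi(u-\widetilde{k}_i)_+)^{2\frac{n+2s}{n}}\,dx\,dt,
\end{align*}
I arrive, after collecting powers of $M$, $\rho$, $\mathfrak{f}$ and $\delta$ (the factors of $M^{m-1}$ cancel precisely because of the intrinsic scaling $\theta=M^{1-m}$, and the same cancellation kills the powers of $\rho$ and $\mathfrak{f}$), at a recursion of the form
\begin{align*}
Y_{i+1}\leq C\delta^{-1}b^i\,Y_i^{1+\frac{2s}{n}}.
\end{align*}
The technical iteration \cref{lem:tech} then forces $Y_i\to 0$ provided $Y_0\leq \mu$ with $\mu=\mu(\delta,\textbf{data})>0$ independent of $\mathfrak{f}$ and $M$, which is exactly the conclusion since $k_\infty=2M-\tfrac{1}{2}\mathfrak{f}M$ and $(K_{\rho_\infty}\times(t_\infty,\tau+\delta\theta\rho^{2s}])=K_{\rho/2}\times(\tau+\tfrac34\delta\theta\rho^{2s},\tau+\delta\theta\rho^{2s}]$.

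The main obstacle I expect is the bookkeeping that ensures $\mu$ depends only on $\delta$ and \textbf{data} and not on $\mathfrak{f}$ or $M$: the factors $M^{m-1}$ arising from the nonlinear time derivative, from the inner energy seminorm, and from the Sobolev interpolation (which balances the $L^\infty L^2$ and $L^2 W^{s,2}$ norms with exponents $2s/n$ and $n/(n+2s)$) must cancel exactly thanks to $\theta=M^{1-m}$, and simultaneously the powers of $\mathfrak{f}M$ in the RHS must match those consumed by the level gap on the LHS. Verifying this cancellation (which works out because the tail hypothesis is precisely calibrated to $\mathfrak{f}^{1/m}M$) is the heart of the estimate.
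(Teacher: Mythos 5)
Your proposal is correct and takes essentially the same approach as the paper: a De Giorgi iteration over shrinking space-time cylinders with levels climbing from $2M-\mathfrak{f}M$ to $2M-\tfrac12\mathfrak{f}M$, using the Caccioppoli inequality of \cref{lem:ee} together with the lower bound from \cref{lem:est}, the same near/far splitting of the tail calibrated to the hypothesis \cref{eq:tail.as2}, and the parabolic Sobolev embedding \cref{eq:SP1} fed into \cref{lem:tech}. The only genuine deviations are cosmetic: you parametrize the sliding time intervals by $t_i=\tau+\tfrac34\delta\theta\rho^{2s}(1-2^{-i})$ rather than by $(-\delta\theta\rho_i^{2s},0]$ as in the paper (both give the same final cylinder up to a harmless inclusion since $1-2^{-2s}\leq 3/4$), you introduce intermediate levels $\widetilde{k}_i$ whereas the paper tests directly at $k_i$, and you apply the Sobolev inequality directly to the $2(n+2s)/n$-norm instead of first using H\"older on the $L^2$-norm, yielding the exponent $1+\tfrac{2s}{n}$ instead of the paper's $1+\tfrac{2s}{n+2s}$; both recursions close via \cref{lem:tech} with a threshold $\mu$ depending only on $\delta$ and the \textbf{data}. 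Your identification of the $M^{m-1}$ and $\mathfrak{f}M$ cancellations as the crux of the bookkeeping is exactly right and is verified by the same computation as in the paper.
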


\begin{proof}
Without loss of generality we assume that $\tau+\delta \theta\rho^{2s} = 0$.  For $i = 0, 1, \ldots$ we set
\begin{align*}
k_i = 2M-\frac{\mathfrak{f}M}{2}-\frac{\mathfrak{f}M}{2^{i+1}}, \quad \rho_i = \frac{\rho}{2}+\frac{\rho}{2^{i+1}}, \quad K_i = K_{\rho_i}, \quad Q_i = K_i \times I_i = K_i\times (-\delta \theta\rho_i^{2s},0].
\end{align*}
We note that on the set $\{u>k_i\}$ we have
\begin{align*}
M < u \leq 2M.
\end{align*}
We choose the cutoff function $\xi$ such that $0\leq \xi\leq 1$, $\xi\equiv 1$ on $Q_{i+1}$ with its support away from $\partial_P Q_i$ with 
\begin{align*}
|\nabla \xi|\leq C\frac{2^i}{\rho}\quad\text{and}\quad |\partial_t\xi|\leq C\frac{4^{is}}{\delta \theta\rho^{2s}}.
\end{align*}
Therefore, we may apply Lemma \ref{lem:ee} to get
\begin{align}\label{eq:cacc2}
\begin{split}
&\esup_{t \in I_{i}}\int_{K_{i}}\xi^2(u-k_i)^2_{+}\,dx\\
&\quad+\int_{I_i}\int_{K_i}\int_{K_i} \min\{\xi^2(x,t),\xi^2(y,t)\}\frac{((\phi(u)-\phi(k_i))_{+}(x,t)-(\phi(u)-\phi(k_i))_{+}(y,t))((u-k_i)_{+}(x,t)-(u-k_i)_{+}(y,t))}{|x-y|^{n+2s}}\,dx\,dy\,dt\\
&\quad\leq \mathfrak{P} + \mathfrak{Q} + \mathfrak{R},
\end{split}
\end{align}
where
\begin{align*}
\mathfrak{P} =
C\displaystyle\iint_{Q_{i}}\int_{K_i}\max\{|u|^{m-1}(x,t),|u|^{m-1}(y,t)\}\max\{(u-k_i)_{+}^{2}(x,t),(u-k_i)_{+}^{2}(y,t)\}\frac{|\xi(x,t)-\xi(y,t)|^2}{|x-y|^{n+2s}}\,dx\,dy\,dt,
\end{align*}
\begin{align*}
\mathfrak{Q} = C\int_{I_{i}}\int_{K_i}(u-k_i)_{+}^{2}|\de_t \xi^2|\,dx\,dt,
\end{align*}		
and
\begin{align*}
\mathfrak{R} =   C\int_{I_{i}}\int_{K_i}\xi^2(u-k_i)_{+}(x,t)\,dx\left(\underset{x\in \mbox{supp}(\xi(\cdot,t)); t\in I_{i}}{\esup}\int_{\bb{R}^n\setminus {K_i}}\frac{(\phi(u)-\phi(k_i))_{+}(y,t)}{|x-y|^{n+2s}}\,dy\right)\,dt.
\end{align*}
We choose the support of $\xi$ appropriately as in the proof of Lemma \ref{lem:deG-general} and estimate the terms similarly. We get
\begin{align*}
&\underset{x\in \mbox{supp}(\xi(\cdot,t)); t\in {I_i}}{\esup}\int_{\bb{R}^n\setminus {K_i}}\frac{(\phi(u)-\phi(k_i))_{+}(y,t)}{|x-y|^{n+2s}}\,dy\\
&\quad\leq 2^{i(n+2s)}\esup_{t \in (T_1,T_2]} \int_{K_R\setminus K_i} \frac{(\phi(u)-\phi(l))_{+}(y,t)}{|y|^{n+2s}}\,dy+2^{i(n+2s)}\esup_{t \in (T_1,T_2]}\int_{\bb{R}^n\setminus K_R} \frac{(\phi(u)-\phi(l))_{+}(y,t)}{|y|^{n+2s}}\,dy.
\end{align*}
Recalling that $|u| \leq 2M$ and the mean value theorem shows that
\begin{align*}
\esup_{t \in (T_1,T_2]} \int_{K_R\setminus K_i} \frac{(\phi(u)-\phi(k_i))_{+}(y,t)}{|y|^{n+2s}}\,dy\leq  \esup_{t \in (T_1,T_2]} \int_{K_R\setminus K_i} \frac{(\phi(u)-\phi(u - \tfrac{\mathfrak{f}M}{2}-\tfrac{\mathfrak{f}M}{2^{i+1}}))_{+}(y,t)}{|y|^{n+2s}}\,dy\leq C\frac{\mathfrak{f} M}{\rho^{2s}}M^{m-1}\leq C\frac{\mathfrak{f}M^{m}}{\rho^{2s}}.
\end{align*}
For the second term, we note that $\phi(k_i) \geq 0$ globally, and so we may estimate 
\begin{align*}
\int_{\bb{R}^n\setminus K_R} \frac{(\phi(u)-\phi(k_i))_{+}(y,t)}{|y|^{n+2s}}\,dy \leq \int_{\bb{R}^n\setminus K_R} \frac{\phi(u)_{+}(y,t)}{|y|^{n+2s}}\,dy \leq  \left(\frac{\rho}{R}\right)^{2s}\frac{\tl^m(u_{+},Q)}{\rho^{2s}}.
\end{align*}
Therefore enforcing
\begin{equation}\label{eq:2}
\left(\frac{\rho}{R}\right)^{\frac{2s}{m}}\tl(u_{+},Q) \leq \mathfrak{f}M,
\end{equation}
we get
\begin{align*}
\underset{x\in \mbox{supp}(\xi(\cdot,t)); t\in I_{i}}{\esup}\int_{\bb{R}^n\setminus K}\frac{(\phi(u)-\phi(k_i))_{+}(y,t)}{|x-y|^{n+2s}}\,dy \leq   C2^{i(n+2s)}M^{m-1}\frac{\mathfrak{f}M}{\rho^{2s}}.
\end{align*}
Hence, we find
\begin{align*}
\mathfrak{R}\leq CM^{m-1}\frac{2^{i(n+2s)}}{\rho^{2s}}(\mathfrak{f}M)^2|A_i|,
\end{align*}
where $A_i = \{u>k_i\}$. Next we have,
\begin{align*}
\mathfrak{Q} \leq C\frac{4^{is}}{\theta\delta \rho^{2s}}(\mathfrak{f}M)^2|A_i|,
\end{align*}
Similarly, using $M<u\leq 2M$ we get
\begin{align*}
\mathfrak{P} \leq CM^{m-1}\frac{4^{i}}{\rho^{2s}}(\mathfrak{f}M)^2|A_i|.
\end{align*}

Moreover, for the left-hand side of \cref{eq:cacc2}, using Lemma \ref{lem:est} we estimate
\begin{align*}
&M^{m-1}\int_{I_i}\int_{K_{i}}\int_{K_{i}}\min\{\xi^2(x,t),\xi^2(y,t)\}\frac{|(\widetilde{u}-k_i)_{+}(x,t)-(\widetilde{u}-k_i)_{+}(y,t)|^2}{|x-y|^{n+2s}}\,dx\,dy\,dt\\
&\leq C\int_{I_i}\int_{K_{i}}\int_{K_{i}}\min\{\xi^2(x,t),\xi^2(y,t)\}\frac{((\phi(u)-\phi(k_i))_{+}(x,t)-(\phi(u)-\phi(k_i))_{+}(y,t))((u-k_i)_+(x,t)-(u-k_i)_+(y,t))}{|x-y|^{n+2s}}\,dx\,dy\,dt 
\end{align*}
using that $|u| \leq 2M$, $k_i\in[\tfrac{M}{2},2M]$ and $m-1\geq 0$, where
\begin{align*}
\widetilde{u}:=\max\left\{u,M/4\right\}.
\end{align*}

Putting together the above estimates we arrive at
\begin{align}\label{eq:3}
\nonumber \esup_{t \in I_i}\int_{K_{i}}\xi^2(\widetilde{u}-k_i)^2_{+}\,dx
+M^{m-1}\int_{I_{i}}\int_{K_{i}}\int_{K_{i}} \min\{\xi^2(x,t),\xi^2(y,t)\}&\frac{|(\widetilde{u}-k_i)_{+}(x,t)-(\widetilde{u}-k_i)_{+}(y,t)|^2}{|x-y|^{n+2s}}\,dx\,dy\,dt \\
\leq C(\mathfrak{f}M)^2|A_i| \frac{2^{i(n+2)}}{\rho^{2s}}\left(2M^{m-1}+\frac{1}{\theta\delta}\right).  
\end{align}
		
Now, we observe
\begin{equation}\label{eq:31}
\begin{aligned}
\frac{(\mathfrak{f}M)^2}{4^{i+2}} |A_{i+1}| &\leq \iint_{Q_i} ((\widetilde{u}-k_i)\xi)_+^2\,dx\,dt\\
&\leq\left(\iint_{{Q}_i}((\widetilde{u}-{k}_i)\xi)_{+}^{2\frac{n+2s}{n}} \,dx\,dt\right)^{\frac{n}{n+2s}}|A_i|^{\frac{2s}{n+2s}}\\
&\leq \left(\int_{I_i}\int_{{K}_i}\int_{{K}_i} \frac{|((\widetilde{u}-{k}_i)\xi)_{+}(x,t)-((\widetilde{u}-{k}_i)\xi)_{+}(y,t)|^2}{|x-y|^{n+2s}} \,dx\,dy\,dt\right)^{\frac{n}{n+2s}}\\
&\quad\times\left(\esup_{t\in I_i}\int_{K_i} |((\widetilde{u}-{k}_i)\xi)^2_{+}(x,t)|\,dx\right)^{\frac{2s}{n+2s}}|A_i|^{\frac{2s}{n+2s}}.
\end{aligned}
\end{equation}
where we used the Sobolov embedding as in \eqref{eq:SP1}. 

Arguing as in earlier proofs, we get
\begin{align*}
Y_{i+1} \leq Cb^i\left(\frac{M^{1-m}}{\theta\delta}\right)^{\frac{-2s}{n+2s}}\left(2+\frac{M^{1-m}}{\theta\delta}\right)Y_i^{1+\frac{2s}{n+2s}}\quad\text{for}\quad Y_n = \frac{|A_n|}{|Q_n|}.
\end{align*}
Therefore 
\begin{align*}
Y_n \rightarrow 0,\quad\text{provided}\quad Y_0 \leq C\frac{M^{1-m}}{\theta\delta}\left(2+\frac{M^{1-m}}{\theta\delta}\right)^{-\frac{n+2s}{2s}}.
\end{align*}
Recalling that $\theta = M^{1-m}$, the conclusion follows. 
\end{proof}

\begin{lem}\label{prop:critical-mass'}
Let $0<m\leq 1$ and $\mathfrak{f} \in (0,1)$. Then we can find a $\mu' > 0$ independent of $\mathfrak{f}$, but depends on $\delta'$ and the \textbf{data} such that if 
\begin{align*}
|\{u \geq 2M-\mathfrak{f}M\} \cap Q'| \leq \mu' |Q'| \quad \mbox{ for } \quad Q' =   K_{\vartheta\rho\nu'^{-\frac{1}{2s}}} \times (\tau,\tau+\delta'\rho^{2s}]
\end{align*}
and 
\begin{align}\label{eq:tail.as2'}
\left(\frac{\vartheta\nu'^{-\frac{1}{2s}}\rho}{R}\right)^{\frac{2s}{m}}\tl(u_{+},Q) \leq \mathfrak{f}^{\frac{1}{m}}M
\end{align}
hold, then
\begin{align*}
u \leq 2M - \frac{1}{2}\mathfrak{f}M \quad \mbox{ a.e. in } K_{\frac{1}{2}\vartheta\rho\nu'^{-\frac{1}{2s}}} \times (\tau+\tfrac{3}{4}\delta'\rho^{2s},\tau+\delta'\rho^{2s}].
\end{align*}
\end{lem}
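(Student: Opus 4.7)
The proof proceeds by De~Giorgi iteration in the spirit of Lemma \ref{prop:critical-mass}, but uses the singular-regime energy estimate in Lemma \ref{lem:ee.m<1} and works with the space-scaled cylinders $K_{\vartheta\nu'^{-1/(2s)}\rho}\times(\cdot,\cdot]$ natural to the fast-diffusion intrinsic geometry. Without loss of generality, shift time so that $\tau+\delta'\rho^{2s}=0$, and set
\begin{align*}
k_i = 2M - \frac{\mathfrak{f}M}{2} - \frac{\mathfrak{f}M}{2^{i+1}}, \quad \rho_i = \frac{\rho}{2} + \frac{\rho}{2^{i+1}}, \quad K_i = K_{\vartheta\nu'^{-\frac{1}{2s}}\rho_i}, \quad Q_i = K_i\times(-\delta'\rho_i^{2s},0].
\end{align*}
Choose smooth cutoff functions $\xi_i$ with $0\leq\xi_i\leq 1$, $\xi_i\equiv 1$ on $Q_{i+1}$, supported away from $\partial_PQ_i$ with $|\nabla \xi_i|\leq C 2^i \nu'^{1/(2s)}/(\vartheta\rho)$ and $|\partial_t \xi_i|\leq C 4^{is}/(\delta'\rho^{2s})$. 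Since $k_i > 0$, Lemma \ref{lem:ee.m<1} applies with the positive truncation $(u-k_i)_+$.

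The key observation is that on the set $A_i:=\{u>k_i\}\cap Q_i$ we have $\tfrac{M}{2}\leq u\leq 2M$, so on this set $\min\{|u(x)|^{2(m-1)},|u(y)|^{2(m-1)}\}\eqsim M^{2(m-1)}$ bounds the nonlocal integrand on the left from below, while $(u-k_i)_+\leq \mathfrak{f}M$ controls each term on the right: the gradient contribution is $\lesssim 2^i \nu'^{1/s}(\mathfrak{f}M)^{2m}/(\vartheta\rho)^{2s}\,|A_i|$; the time-derivative contribution is $\lesssim 4^{is}(\mathfrak{f}M)^{m+1}/(\delta'\rho^{2s})\,|A_i|$; and the tail contribution is handled exactly as in Lemma \ref{prop:critical-mass} via the mean value theorem on $\phi$, using the intermediate value $\xi\geq k_i\geq M/2$ (so that $\phi'(\xi)\leq m(M/2)^{m-1}$ for $m\leq 1$) for the intermediate annulus $K_R\setminus K_i$, and the standing tail hypothesis \eqref{eq:tail.as2'} for the far-field piece, producing $\esup_{x\in\text{supp}\,\xi_i(\cdot,t)}\int_{\bb{R}^n\setminus K_i} (\phi(u)-\phi(k_i))_+/|x-y|^{n+2s}\,dy \lesssim 2^{i(n+2s)}\mathfrak{f}M^{m}/(\vartheta\nu'^{-1/(2s)}\rho)^{2s}$. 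Dividing by $M^{2(m-1)}$ and using $\vartheta^{2s}=M^{m-1}$ collapses the combined right-hand side to $Cb^i(\mathfrak{f}M)^2/(\delta'\rho^{2s})\,|A_i|\cdot (\text{dimensionless factor})$.

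With this Caccioppoli estimate for $(u-k_i)_+\xi_i$, I then apply the Sobolev embedding \eqref{eq:SP1} exactly as in \eqref{eq:31}: from the elementary lower bound $(\mathfrak{f}M/2^{i+2})^2|A_{i+1}|\leq \iint_{Q_i}(u-k_i)_+^2\xi_i^2\,dx\,dt$, I deduce a recursion of the form
\begin{align*}
Y_{i+1}\leq C b^i\, Y_i^{1+\frac{2s}{n+2s}},\qquad Y_i:=\frac{|A_i|}{|Q_i|},
\end{align*}
where $C$ and $b$ depend only on $\delta'$, $\nu'$ and the \textbf{data}, but crucially not on $\mathfrak{f}$ or $M$ (the homogeneity of every term in $\mathfrak{f}M$ guarantees this, and the intrinsic choice $\vartheta^{2s}=M^{m-1}$ eliminates the $M$-dependence). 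Lemma \ref{lem:tech} then forces $Y_i\to 0$ provided $Y_0\leq \mu':=C^{-(n+2s)/(2s)}b^{-((n+2s)/(2s))^2}$, yielding the desired pointwise bound on $K_{\vartheta\rho\nu'^{-1/(2s)}/2}\times(-\tfrac14\delta'\rho^{2s},0]$.

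The main obstacle is bookkeeping: keeping track of the interplay between the intrinsic space-scaling $\vartheta=M^{(m-1)/(2s)}$, the further dilation by $\nu'^{-1/(2s)}$, and the fact that for $m\leq 1$ the weight $|u|^{2(m-1)}$ in Lemma \ref{lem:ee.m<1} appears on the \emph{left} of the energy inequality (so it must be bounded below, which is possible only because $u\leq 2M$), whereas the maximum weights on the right must be estimated \emph{above}. One must also verify that the tail hypothesis \eqref{eq:tail.as2'} is compatible with the mean-value expansion of $\phi$ across $\{u\geq k_i\}$, which works precisely because the truncation level $k_i\geq M/2$ keeps us uniformly away from the singular point $u=0$ on $A_i$.
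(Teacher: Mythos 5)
Your high-level strategy — De Giorgi iteration over the same levels $k_i = 2M - \tfrac{\mathfrak f M}{2} - \tfrac{\mathfrak f M}{2^{i+1}}$ and cylinders $K_{\vartheta\nu'^{-1/(2s)}\rho_i}\times(-\delta'\rho_i^{2s},0]$, with cutoffs scaled by $\nu'^{1/(2s)}/(\vartheta\rho)$, the observation that $u>M$ on $A_i$ keeps you away from the singularity, the tail split into intermediate annulus plus far field, and the $\nu'$-dependence of $\mu'$ coming from the space scaling — is exactly the paper's, which simply refers back to Lemma~\ref{prop:critical-mass} with Lemma~\ref{lem:ee.m<1} and hypothesis \eqref{eq:tail.as2'} substituted.

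However, there is a genuine inconsistency in your accounting that would prevent the iteration from closing as written: you announce that you will use the energy estimate of Lemma~\ref{lem:ee.m<1}, but the quantities you then quote belong to Lemma~\ref{lem:ee-sing-2}. Specifically, you write the lower-bound weight on the Sobolev seminorm as $\min\{|u(x)|^{2(m-1)},|u(y)|^{2(m-1)}\}$ and the right-hand side powers as $(\mathfrak f M)^{2m}$ (gradient term) and $(\mathfrak f M)^{m+1}$ (time term); those are the $\g_\pm$-based, $L^1$-in-time quantities of Lemma~\ref{lem:ee-sing-2}, which would require the $L^1$-Sobolev inequality \eqref{eq:SP2} and would give a recursion exponent $\tfrac{2s}{n+s}$. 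Yet you also invoke the $L^2$-Sobolev inequality \eqref{eq:SP1} and the $\esup\int\xi^2(u-k_i)_+^2$ time slice, together with the exponent $\tfrac{2s}{n+2s}$ — these are the quantities from Lemma~\ref{lem:ee.m<1}, whose weight is $\min\{|u(x)|^{m-1},|u(y)|^{m-1}\}$ and whose right-hand side has uniform power $(u-k_i)_+^2\leq(\mathfrak f M)^2$ in each term. You cannot mix the two: \eqref{eq:SP1} needs the $L^2$ time-slice information that Lemma~\ref{lem:ee-sing-2} does not provide for $m<1$. To make your proof correct you should commit to Lemma~\ref{lem:ee.m<1} throughout, as the paper does: then the weight is $\min\{|u|^{m-1}\}\geq(2M)^{m-1}$, the max-weight on the right is bounded above by $M^{m-1}$ on $\{u>k_i\}$ (where $u>M$), every right-hand side term carries the uniform factor $(\mathfrak f M)^2$, and the homogeneity argument then eliminates $\mathfrak f$ and $M$ as you describe. (Alternatively, a consistent proof via Lemma~\ref{lem:ee-sing-2} is also possible — Lemma~\ref{lem:deG-general'} illustrates the bookkeeping — but then you must lower-bound $\g_+$ by $(\phi(\widetilde k_i)-\phi(k_i))(u-\widetilde k_i)_+$, pass to an $L^1$-in-time quantity, and use \eqref{eq:SP2} with exponent $\tfrac{2s}{n+s}$.)

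Two smaller points: on $A_i$ you actually have $u>M$, not merely $u\geq M/2$ (since $k_0=2M-\mathfrak f M>M$ for $\mathfrak f\in(0,1)$), and overloading $\xi$ to denote both the cutoff function and the MVT intermediate point is confusing though not fatal.
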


\begin{proof}
Without loss of generality we assume that $\tau+\delta\rho^{2s} = 0$.  For $i = 0, 1, \ldots$ we set
\begin{align*}
k_i = 2M-\frac{\mathfrak{f}M}{2}-\frac{\mathfrak{f}M}{2^{i+1}}, \quad \rho_i = \frac{\rho}{2}+\frac{\rho}{2^{i+1}}, \quad K_i = K_{\vartheta\nu^{-\frac{1}{2s}}\rho_i}, \quad Q_i = K_i \times I_i = K_i\times (-\delta'\rho_i^{2s},0].
\end{align*}
We note that on the set $\{u>k_i\}$ we have
\begin{align*}
M < u \leq 2M.
\end{align*}
We choose the cutoff function $\xi$ such that $0\leq \xi\leq 1$, $\xi\equiv 1$ on $Q_{i+1}$ with its support away from $\partial_P Q_i$ with 
\begin{align*}
|\nabla \xi|\leq C\frac{2^i}{\vartheta\nu^{-\frac{1}{2s}}\rho}\quad\text{and}\quad |\partial_t\xi|\leq C\frac{4^{is}}{\delta'\rho^{2s}}.
\end{align*}
Therefore, we may apply Lemma \ref{lem:ee.m<1} and arguing similarly to the proof of Lemma \ref{prop:critical-mass} using \cref{eq:tail.as2'} instead of \cref{eq:tail.as2} yields the conclusion. Again due to the space scaling there is an extra factor of $\nu'$ in the constant $\mu'$; more precisely, in this case $\mu' = C\nu'^{\frac{n}{2s}}(2+\delta^{-1})^{-\frac{n+2s}{2s}}\delta^{-1}$. 
\end{proof}
Now we are ready to prove the following.
\begin{prop}\label{prop:close-to-zero+}
Let $m\geq 1$ and $u$ be a local weak solution to \cref{eq:main-eq} in $\Omega_T$ in the sense of Definition \ref{def:weak}. Let $Q:=K_{R}\times I_{R} \subset \Omega_T$ and $M>0$ be such that 
\begin{align*}
2M \geq  \esup_{Q} |u|. 
\end{align*}
We set $\theta = M^{1-m}$ and suppose that $Q_{\rho}(\theta) \subset Q$. For the universal constant $\nu \in (0,1)$ as obtained in Lemma \ref{lem:deG-general}, if we have
\begin{align}\label{eq:measureQ}
|\{u \leq M\} \cap Q_{\rho}(\theta)| > \nu |Q_{\rho}(\theta)| 
\end{align}
then there exist a $\kappa \in (0,1)$ and a $\pi \in (0,1)$ which also depend only on the \textbf{data} such that 
\begin{align*}
u \leq 2M(1-\kappa) \quad  \mbox{ a.e. in } \quad Q_{\frac{\rho}{2}}(\nu\theta)
\end{align*}
provided
\begin{align}\label{eq:tail.pi}
\left(\frac{\rho}{R}\right)^{\frac{2s}{m}}\tl(u_+,Q) \leq \pi^{\frac{1}{m}} M.
\end{align}
\end{prop}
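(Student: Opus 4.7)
The plan is to chain together the four lemmas of this section. Without loss of generality set $(x_0,t_0)=(0,0)$. By hypothesis \cref{eq:measureQ}, Lemma \ref{lem:time-slice} produces a time slice $\tau\in(-\theta\rho^{2s},-\tfrac{\nu}{2}\theta\rho^{2s}]$ satisfying $|\{u(\cdot,\tau)\leq M\}\cap K_\rho|>\tfrac{\nu}{2}|K_\rho|$. Since $\pi<1$, the tail bound \cref{eq:tail.pi} implies $\left(\tfrac{\rho}{R}\right)^{\frac{2s}{m}}\tl(u_+,Q)\leq M$, so Lemma \ref{prop:time-exp} upgrades the slice to the propagated measure bound
\begin{align*}
|\{u(\cdot,t)\leq 2M-\epsilon M\}\cap K_\rho|\geq \tfrac{\nu}{4}|K_\rho|\quad\mbox{for all } t\in(\tau,\tau+\delta\theta\rho^{2s}],
\end{align*}
for universal $\epsilon\in(0,1/4)$ and $\delta\in(0,1)$ depending only on $\nu$ and \textbf{data}.

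With $\mu=\mu(\delta,\text{\textbf{data}})$ denoting the constant from Lemma \ref{prop:critical-mass}, the plan is to fix an integer $j\geq 1$, depending only on \textbf{data}, so that $C/(\delta\nu 2^j)\leq\mu$, and then set $\pi:=\epsilon/2^{j+1}$. With this choice, \cref{eq:tail.pi} enforces the tail bound of Lemma \ref{prop:critical-mass} at level $\mathfrak{f}=\epsilon/2^{j+1}$ (and a fortiori that of Lemma \ref{prop:shrink} at index $j$). The shrinking lemma then yields $|\{u\geq 2M-\mathfrak{f}M\}\cap Q'|\leq\mu|Q'|$ on $Q'=K_\rho\times(\tau,\tau+\delta\theta\rho^{2s}]$, which is exactly the critical-mass hypothesis of Lemma \ref{prop:critical-mass}. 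Invoking it, I obtain
\begin{align*}
u\leq 2M(1-\kappa_0)\quad\mbox{a.e. in }K_{\rho/2}\times(\tau+\tfrac{3}{4}\delta\theta\rho^{2s},\tau+\delta\theta\rho^{2s}],\quad\kappa_0:=\epsilon/2^{j+3}.
\end{align*}

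Because $\delta$ is typically much smaller than $\nu$ and $\tau$ could be as low as $-\theta\rho^{2s}$, the subinterval $(\tau+\tfrac{3}{4}\delta\theta\rho^{2s},\tau+\delta\theta\rho^{2s}]$ does not in general cover the target time interval $(-\nu\theta(\rho/2)^{2s},0]$ of $Q_{\rho/2}(\nu\theta)$; in particular the right endpoint need not even reach $t=0$. The final step is to iterate the above argument $N=O(1/\delta)$ times forward in time. At each step, once the pointwise bound $u\leq 2M(1-\kappa_{k-1})$ has been established on the previous slab, this information substitutes for the initial measure hypothesis: it trivially provides $|\{u(\cdot,s_k)\leq 2M\}\cap K_{\rho/2}|=|K_{\rho/2}|$ at the new starting time $s_k$, and hence (after passing to an appropriate reference scale) the hypothesis needed to reinvoke Lemmas \ref{prop:time-exp}, \ref{prop:shrink}, and \ref{prop:critical-mass} on a slab of length comparable to $\delta\theta\rho^{2s}$ beginning at $s_k$. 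After $N$ steps (a universal number), the union of the slabs covers $(-\nu\theta(\rho/2)^{2s},0]$, and the final $\kappa$ is a universal constant in $(0,\kappa_0]$. I expect the main technical obstacle to be precisely this bookkeeping: ensuring that the intrinsic time scaling $\theta=M^{1-m}$, the tail conditions, and the De Giorgi constants remain uniformly controlled across iterations so that $\kappa$ and $\pi$ do not degenerate; the flexibility offered by Remark \ref{rem:deG-general-tail} is crucial in absorbing the multiplicative universal losses incurred at each step.
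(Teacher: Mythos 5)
Your plan matches the paper's proof essentially step-for-step: chain Lemmas \ref{lem:time-slice}, \ref{prop:time-exp}, \ref{prop:shrink}, \ref{prop:critical-mass} to get a one-slab drop in the supremum, then iterate forward in time using the resulting pointwise bound as a full-measure substitute for the initial measure density, which makes the constants of the second and later steps universal and independent of $\nu$. The only divergence is in bookkeeping details that you flag but do not resolve: the paper runs the four lemmas at scale $2\rho$ so the spatial cube does not shrink past $K_\rho$ under iteration, and it absorbs the cumulative degradation of the tail levels $\mathfrak{f}$ simply by choosing $\pi$ small enough from the outset (see \cref{eq:pi.tail}), not by invoking Remark \ref{rem:deG-general-tail}, which plays no role in this proposition.
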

\begin{proof}
From Lemma \ref{lem:time-slice}, we have \cref{eq:time-slice} which gives the following measure density property for $2\rho$:
\begin{align}\label{eq:measureK}
\left|\left\{u(\cdot,\tau) \leq M \right\} \cap K_{2\rho}\right| > \frac{1}{2}\frac{\nu}{4^n}\left|K_{2\rho}\right|
\end{align}
for some time $\tau \in (-\theta(2\rho)^{2s},-\frac{\nu}{2}\theta(2\rho)^{2s}]$. With the choice of $\pi\leq 2^{-2s}$, the tail assumption in Lemma \ref{prop:time-exp} is satisfied and we get \cref{eq:time-exp} with $2\rho$ in place of $\rho$ and $4^{-n}\nu$ instead of $\nu$. This also fixes $\delta\in(0,1)$ and $\epsilon\in(0,\frac{1}{4})$ as in Lemma \ref{prop:time-exp}. Having fixed $\delta$, we now fix $\mu\in(0,1]$ as in Lemma \ref{prop:critical-mass}. After fixing $\mu$ we now choose a $j>1$ large enough so that
\begin{align*}
\frac{C}{\delta \nu} \frac{1}{2^j} \leq \mu,
\end{align*}
where $C$ is the universal constant in Lemma \ref{prop:shrink}. We further enforce that $\pi>0$ satisfy
\begin{align*}
\pi\leq \frac{\epsilon}{2^{j+2s}}.
\end{align*}
Then the tail assumption \cref{eq:tail.as} with $2\rho$ instead of $\rho$ in Lemma \ref{prop:shrink} is satisfied. Thus Lemma \ref{prop:shrink} implies that the measure hypothesis in Lemma \ref{prop:critical-mass} is satisfied for $\mathfrak{f} = \epsilon 2^{-j-1}$. We further enforce
\begin{align}\label{eq:pi}
\pi\leq\frac{\epsilon}{2^{j+1+2s}}
\end{align}
so that the tail assumption with $2\rho$ instead of $\rho$ in Lemma \ref{prop:critical-mass} is satisfied. Note that the previous two restrictions are now automatically satisfied. We get
\begin{equation}\label{eq:4}
u \leq 2M - \frac{\epsilon M}{2^{j+2}} \quad \mbox{ a.e. in } \quad K_{\rho} \times (\tau + \tfrac{3}{4}\delta\theta(2\rho)^{2s}, \tau + \delta\theta(2\rho)^{2s}].
\end{equation}

If 
\begin{align*}
\tau + \delta\theta(2\rho)^{2s} \geq 0,
\end{align*}
then the conclusion follows by setting $\kappa = \epsilon2^{-j-3}$. If we have not reached the top of the cylinder, i.e., $\tau + \delta\theta(2\rho)^{2s}<0$, then we can run the above argument again. However, this time we note that the conclusion \cref{eq:4} affords us a condition as in \cref{eq:time-slice} again  but this time with $\nu/2$ set to $1$.  We now repeat the forgoing argument to find a $\overline{\delta} \in (0,1)$ and $\overline{\epsilon} \in (0,1/4)$ which depends only on \textbf{data} this time (and is independent of $\nu$) and 
fix $\overline{\mu}\in(0,1]$ as in Lemma \ref{prop:critical-mass}. After fixing $\overline{\mu}$ we now choose a $\overline{j}>1$ large enough so that
\begin{align*}
\frac{C}{\overline{\delta}} \frac{1}{2^{\overline{j}}} \leq \overline{\mu},
\end{align*}
where $C$ is the universal constant in Lemma \ref{prop:shrink}. 


We further enforce
\begin{align}\label{eq:pi.tail}
\pi\leq\frac{\epsilon}{2^{j+\overline{j}+1+2s}}
\end{align}
and argue as before to get
\begin{equation*}
u \leq 2M - \frac{\overline{\epsilon}}{2^{\overline{j}+2}}\frac{\epsilon}{2^{j+1}}M \quad \mbox{ a.e. in } \quad K_{\rho} \times (\tau + \tfrac{3}{4}\delta\theta(2\rho)^{2s}, \tau + (\delta+\overline{\delta})\theta(2\rho)^{2s}].
\end{equation*}
We choose a $D \geq 1$ such that $D \overline{\delta} \geq 1$. Note that by construction of $\overline{\delta}$, the number $D$ is also universal. Therefore repeating the above argument $D$ times (but commonly using assumption \cref{eq:pi.tail} for tail) we arrive at
\begin{align*}
u \leq 2M - \left(\dfrac{\overline{\epsilon}}{2^{\overline{j}+1}}\right)^D\left(\dfrac{\epsilon}{2^{j+1}}\right)M \quad \mbox{ a.e. in } \quad K_{\rho} \times (\tau + \tfrac{3}{4}\delta\theta(2\rho)^{2s}, \tau + (\delta+D\overline{\delta})\theta(2\rho)^{2s}]
\end{align*}
By contruction,
\begin{align*}
\tau + (\delta+D\overline{\delta})\theta\rho^{2s} \geq 0
\end{align*}
and thus setting
\begin{align*}
\kappa = \frac{1}{2}\left(\dfrac{\overline{\epsilon}}{2^{\overline{j}+1}}\right)^D\left(\dfrac{\epsilon}{2^{j+1}}\right),
\end{align*}
the conclusion follows. 
\end{proof}

\begin{prop}\label{prop:close-to-zero+'}
Let $0<m\leq 1$ and $u$ be a local weak solution to \cref{eq:main-eq} in $\Omega_T$ in the sense of Definition \ref{def:weak}. Let $Q:=K_{R}\times I_{R} \subset \Omega_T$ and $M>0$ be such that 
\begin{align*}
2M \geq  \esup_{Q} |u|. 
\end{align*}
We set $\vartheta^{2s} = M^{m-1}$ and suppose that $\mathfrak{Q}_{\rho}(\vartheta) \subset Q$. For the universal constant $\nu' \in (0,1)$ as obtained in Lemma \ref{lem:deG-general'}, if we have
\begin{align}\label{eq:measureQ'}
|\{u \leq M\} \cap \mathfrak{Q}_{\rho}(\vartheta)| > \nu' |\mathfrak{Q}_{\rho}(\vartheta)| 
\end{align}
then there exist a $\kappa' \in (0,1)$ and a $\pi' \in (0,1)$ which also depend only on the \textbf{data} such that 
\begin{align*}
u \leq 2M(1-\kappa') \quad  \mbox{ a.e. in } \quad \mathfrak{Q}_{\frac{\rho}{2}}(\nu'^{-\frac{1}{2s}}\vartheta)
\end{align*}
provided
\begin{align}\label{eq:tail.pi'}
\left(\frac{\vartheta\nu'^{-\frac{1}{2s}}\rho}{R}\right)^{\frac{2s}{m}}\tl(u_+,Q) \leq \pi'^{\frac{1}{m}} M.
\end{align}
\end{prop}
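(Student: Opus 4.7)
The plan is to mimic the argument of \cref{prop:close-to-zero+}, but systematically replace the intrinsic time scaling $\theta=M^{1-m}$ by the intrinsic spatial scaling $\vartheta=M^{(m-1)/(2s)}$ appropriate to the fast diffusion regime, and use the primed versions of the auxiliary lemmas (\cref{lem:time-slice'}, \cref{prop:time-exp'}, \cref{prop:shrink'}, \cref{prop:critical-mass'}) in place of their unprimed counterparts. The asymmetry of the scaling (spatial rather than temporal) means that the nonlocal tail condition now must absorb extra factors of $\vartheta\nu'^{-1/(2s)}$, which we track carefully when choosing $\pi'$.

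First, after passing from $\rho$ to $2\rho$, the measure hypothesis \cref{eq:measureQ'} implies \cref{eq:nu'} with $\nu'/2$ in place of $\nu'$ and with $2\rho$ in place of $\rho$, up to the natural factor coming from $|\mathfrak{Q}_{2\rho}(\vartheta)|=4^{n+s}|\mathfrak{Q}_{\rho}(\vartheta)|$. Invoking \cref{lem:time-slice'} then produces a time level $\tau\in(-(2\rho)^{2s},-\frac{\nu'^{1+n/(2s)}}{2}(2\rho)^{2s}]$ at which a lower bound of the form \cref{eq:time-slice'} holds on the spatially scaled cube $K_{\vartheta(2\rho)\nu'^{-1/(2s)}}$. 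Choosing $\pi'\leq 2^{-2s}$ guarantees the tail smallness needed in \cref{prop:time-exp'}, yielding a universal $\delta'\in(0,1)$ and $\epsilon'\in(0,1/4)$ together with the expanded-in-time measure estimate \cref{eq:time-exp'}.

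With $\delta'$ fixed, we fix the universal constant $\mu'$ from \cref{prop:critical-mass'}. Then we pick $j\geq 1$ so large that $C(\delta'\nu'^{2})^{-1}2^{-j}\leq\mu'$, where $C$ is the universal constant in \cref{prop:shrink'}. Enforcing the stronger tail restriction $\pi'\leq\epsilon'/2^{j+1+2s}$, the hypotheses of \cref{prop:shrink'} hold with $2\rho$ in place of $\rho$ and we conclude that the set $\{u\geq 2M-\epsilon'M/2^{j+1}\}$ occupies less than $\mu'|Q'|$ of the corresponding cylinder. Applying \cref{prop:critical-mass'} with $\mathfrak{f}=\epsilon'2^{-j-1}$ then gives the pointwise bound
\begin{equation*}
u\leq 2M-\frac{\epsilon' M}{2^{j+2}}\quad\text{a.e. in }K_{\tfrac{1}{2}\vartheta(2\rho)\nu'^{-1/(2s)}}\times(\tau+\tfrac{3}{4}\delta'(2\rho)^{2s},\tau+\delta'(2\rho)^{2s}].
\end{equation*}

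If the upper time endpoint $\tau+\delta'(2\rho)^{2s}$ already reaches $0$, we are done with $\kappa'=\epsilon'/2^{j+3}$. Otherwise, we iterate: the pointwise bound just obtained serves as a full-measure input (the factor $\nu'/2$ being replaced by $1$), so the next application of \cref{prop:time-exp'}--\cref{prop:critical-mass'} yields parameters $\overline{\delta'},\overline{\epsilon'},\overline{j}$ independent of $\nu'$. A finite, universal number $D$ of such iterations (chosen so that $\delta'+D\overline{\delta'}\geq 1$) covers the remaining time strip, and at each step we only tighten the tail assumption by a bounded amount, culminating in a single choice $\pi'\leq\epsilon'/2^{j+\overline{j}D+1+2s}$. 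Setting $\kappa'=\tfrac{1}{2}(\overline{\epsilon'}/2^{\overline{j}+1})^{D}(\epsilon'/2^{j+1})$ produces the desired decay in the full cylinder $\mathfrak{Q}_{\rho/2}(\nu'^{-1/(2s)}\vartheta)$. The principal obstacle is the bookkeeping of nonlocal contributions: because the spatial scaling $\vartheta\nu'^{-1/(2s)}$ appears in every tail hypothesis (\cref{eq:tail'}, \cref{eq:tail.as'}, \cref{eq:tail.as2'}), one must verify that a single $\pi'$ simultaneously satisfies all of them across the iteration, which is why $\pi'$ must be shrunk by a factor depending on $j$, $\overline{j}$, and $D$, each of which is ultimately universal.
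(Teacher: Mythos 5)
Your proposal is correct and follows essentially the same route the paper takes: the paper's own proof of \cref{prop:close-to-zero+'} is a brief pointer back to the proof of \cref{prop:close-to-zero+}, substituting the primed lemmas (\cref{lem:time-slice'}, \cref{prop:time-exp'}, \cref{prop:shrink'}, \cref{prop:critical-mass'}) for their unprimed counterparts and noting, exactly as you do, that the only structural change is the extra factor of $\nu'$ (giving $\nu'^2$ in the denominator) in the choice of $j$ after fixing $\mu'$, which comes from the scaling in space rather than time. Your write-up spells out the iteration that the paper leaves implicit, and correctly identifies the universal parameters $(\delta',\epsilon',\mu',j,\overline{j},D)$ and the cumulative tail restriction on $\pi'$. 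One minor bookkeeping slip: the volume ratio $|\mathfrak{Q}_{2\rho}(\vartheta)|/|\mathfrak{Q}_{\rho}(\vartheta)|$ is $2^{n+2s}$, not $4^{n+s}$; this does not affect the argument, since it only alters a universal constant in the measure-density step, and the paper's analogous passage is similarly loose.
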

\begin{proof}
By applying the similar argument in the proof of Proposition \ref{prop:close-to-zero+} using \cref{eq:measureQ'} and \cref{eq:tail.pi'} instead of \cref{eq:measureQ} and \cref{eq:tail.pi} respectively, we obtain the desired conclusion. The only difference is that in the choice of the first $\mu'$ we must have a $\nu'^2$ in the denominator instead of just $\nu'$ due to the scaling in space. 
\end{proof}

The following is the main goal of this section.
\begin{prop}\label{prop:close-to-0}
Let $m>0$ and $u$ be a local weak solution to \cref{eq:main-eq} in $\Omega_T$ in the sense of Definition \ref{def:weak}. Let $Q:=K_{R}\times I_{R} \subset \Omega_T$ and $M>0$ be such that 
\begin{align}\label{eq:sup.tail}
2M \geq  \esup_{Q} |u|+ \tl^{m}(u;Q)\text{ if }m\in(0,1] \quad\text{and}\quad 2M \geq  \esup_{Q} |u|+ \tl(u;Q) \text{ if }m\geq1.
\end{align}
Then we have the following.

\textbf{When $m\geq 1$: }Suppose that $(x_0,t_0)+Q_{\rho}(\theta)\subset Q$ and $\theta = M^{1-m}$. Fix the universal constant $\nu\in(0,1)$ as in Lemma \ref{lem:deG-general}. Then there exists a constant $\mathbf{c_0} \in (0,1)$ which depends only on \textbf{data} such that if 
\begin{align*}
|\{u \leq M\} \cap Q_{\mathbf{c_0}R}(\theta)| > \nu |Q_{\mathbf{c_0}R}(\theta)|\quad\text{and}\quad|\{u \geq - M\} \cap Q_{\mathbf{c_0}R}(\theta)| > \nu |Q_{\mathbf{c_0}R}(\theta)|
\end{align*}
hold, then there exists a $\kappa \in (0,1/2)$ which also depends only on the \textbf{data} such that 
\begin{align}\label{eq:concl}
|u| \leq 2M(1-\kappa) \quad  \mbox{ a.e. in } \quad Q_{\mathbf{c_0}R/2}(\nu\theta).
\end{align}

\textbf{When $0<m\leq 1$: }Suppose that $(x_0,t_0)+\mathfrak{Q}_{\rho}(\vartheta)\subset Q$ and $\vartheta = M^{\frac{m-1}{2s}}$. Fix the universal constant $\nu'\in(0,1)$ as in Lemma \ref{lem:deG-general'}. Then there exists a constant $\mathbf{c_0}' \in (0,1)$ which depends only on \textbf{data} such that if 
\begin{align*}
|\{u \leq M\} \cap \mathfrak{Q}_{\mathbf{c_0}'R}(\vartheta)| > \nu' |\mathfrak{Q}_{\mathbf{c_0}'R}(\vartheta)|\quad\text{and}\quad|\{u \geq - M\} \cap \mathfrak{Q}_{\mathbf{c_0}'R}(\vartheta)| > \nu' |\mathfrak{Q}_{\mathbf{c_0}'R}(\vartheta)|
\end{align*}
hold, then there exists a $\kappa' \in (0,1/2)$ which also depends only on the \textbf{data} such that 
\begin{align}\label{eq:concl'}
|u| \leq 2M(1-\kappa') \quad  \mbox{ a.e. in } \quad \mathfrak{Q}_{\mathbf{c_0}'R/2}(\nu'^{-\frac{1}{2s}}\vartheta).
\end{align}
\end{prop}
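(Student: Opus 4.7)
The plan is to deduce this proposition directly from \cref{prop:close-to-zero+} and \cref{prop:close-to-zero+'}, by using the combined sup/tail bound \eqref{eq:sup.tail} to certify the small-tail hypotheses \eqref{eq:tail.pi} and \eqref{eq:tail.pi'} via a sufficiently small universal choice of $\mathbf{c_0}$ (respectively $\mathbf{c_0}'$), and then invoking the sign-change symmetry of \eqref{eq:main-eq} to upgrade the one-sided bound to the two-sided bound $|u|\leq 2M(1-\kappa)$. No new analytical machinery beyond what is already developed in \cref{sec:close0} should be needed.

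For the case $m\geq 1$: fix universal constants $\nu,\pi,\kappa$ as in \cref{prop:close-to-zero+} and apply that proposition with spatial radius $\rho := \mathbf{c_0} R$, so that $\theta = M^{1-m}$ is unchanged. The tail condition \eqref{eq:tail.pi} then reads $\mathbf{c_0}^{2s/m}\,\tl(u_+;Q) \leq \pi^{1/m} M$. Since $\tl(u_+;Q)\leq \tl(u;Q)\leq 2M$ by \eqref{eq:sup.tail}, it suffices to set $\mathbf{c_0} := (\pi/2^m)^{1/(2s)}$, which depends only on \textbf{data}. The proposition then yields $u\leq 2M(1-\kappa)$ on $Q_{\mathbf{c_0}R/2}(\nu\theta)$. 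Because $\phi$ is odd, $-u$ is also a local weak solution for the same kernel, satisfies the analogous sup/tail bound, and the measure condition for $-u$ at level $M$ is exactly the second assumed measure condition $|\{u\geq -M\}\cap Q_{\mathbf{c_0}R}(\theta)|>\nu|Q_{\mathbf{c_0}R}(\theta)|$; applying \cref{prop:close-to-zero+} once more gives $-u\leq 2M(1-\kappa)$ on the same cylinder, which combined with the previous bound produces \eqref{eq:concl}.

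For the case $0<m\leq 1$: the argument is entirely parallel, using \cref{prop:close-to-zero+'} with $\rho := \mathbf{c_0}' R$ and $\vartheta = M^{(m-1)/(2s)}$. Raising \eqref{eq:tail.pi'} to the $m$-th power and substituting $\vartheta^{2s}=M^{m-1}$ rearranges the tail condition to $\nu'^{-1} (\mathbf{c_0}')^{2s}\,\tl^m(u_+;Q) \leq \pi' M$. Since $\tl^m(u_+;Q)\leq \tl^m(u;Q)\leq 2M$ from \eqref{eq:sup.tail}, the choice $\mathbf{c_0}' := (\pi'\nu'/2)^{1/(2s)}$ (again universal) suffices. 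Running \cref{prop:close-to-zero+'} separately for $u$ and $-u$ then yields \eqref{eq:concl'}.

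The only genuine subtlety is the correct cancellation of the geometric scaling factor $\vartheta^{2s}=M^{m-1}$ against the exponent in $\tl^m$ in the fast-diffusion regime; this is precisely why \eqref{eq:sup.tail} is calibrated using $\tl^m$ for $m\leq 1$ and $\tl$ for $m\geq 1$. Once this bookkeeping is performed, the small-tail hypotheses of the earlier propositions are met at scale $\mathbf{c_0}R$ (resp.\ $\mathbf{c_0}'R$) with $\mathbf{c_0},\mathbf{c_0}'$ depending only on \textbf{data}, and the proof is essentially complete.
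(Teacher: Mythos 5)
Your proposal is correct and follows essentially the same route as the paper's own proof: both reduce the statement to Propositions \ref{prop:close-to-zero+} and \ref{prop:close-to-zero+'} by using \eqref{eq:sup.tail} to verify the small-tail hypotheses \eqref{eq:tail.pi}, \eqref{eq:tail.pi'} at scale $\mathbf{c_0}R$ (resp.\ $\mathbf{c_0}'R$), with the exact same cancellation $\vartheta^{2s}=M^{m-1}$ doing the bookkeeping in the fast-diffusion case. The only (minor) difference is that you make explicit the sign-change step $u\mapsto -u$ to upgrade the one-sided conclusion to $|u|\leq 2M(1-\kappa)$, which the paper leaves implicit, and you carry the harmless factor of $2$ in the tail bound through to the definition of $\mathbf{c_0}$; both are sound.
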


\begin{proof}
Note that
\begin{equation*}
(\mathbf{c_0}R/R)^{\frac{2s}m}\mathrm{Tail}(u_+;Q)\leq \mathbf{c_0}^{\frac{2s}{m}}M\quad\text{if }m\geq 1\quad\text{and}\quad(\mathbf{c_0}R \vartheta\nu'^{-\frac{1}{2s}}/R)^{\frac{2s}m}\mathrm{Tail}(u_+;Q)\leq (\mathbf{c_0\nu'^{-\frac{1}{2s}}})^{\frac{2s}{m}}M\quad\text{if }m\in(0,1].
\end{equation*}
where we used $\vartheta^{2s} = M^{m-1}$ for the second estimate. By taking $\mathbf{c_0}\leq \pi^{\frac1{2s}}$ and $\mathbf{c_0}'\leq \pi'^{\frac1{2s}}\nu'^{\frac{1}{2s}}$, where the constant $\pi$ is determined in Proposition \ref{prop:close-to-zero+} and $\pi'$ is determined in Proposition \ref{prop:close-to-zero+'}, we can apply Proposition \ref{prop:close-to-zero+} with $\rho$ replaced by $\mathbf{c_0}R$ and Proposition \ref{prop:close-to-zero+'} with $\rho$ replaced by $\mathbf{c_0}'R$, in order to prove \cref{eq:concl} and \cref{eq:concl'}.
\end{proof}

\section{H\"{o}lder Regularity}\label{sec:hol}

Now we are ready to prove the main theorem.
\begin{thm}\label{thm:hol2}
Let $u$ be a locally bounded, local weak solution to \cref{eq:main-eq} in $\Omega_T$. Then $u$ is locally H\"older continuous in $\Omega_T$. Moreover, there exist universal constants $C=C(\text{\textbf{data}})>0$ and $\alpha=\alpha(\text{\textbf{data}})\in (0,1)$ such that the following holds.

\textbf{When $m\geq 1$: } For any $0<r<R<R_0$ with $Q_R(\theta) \subset Q_{R_0} \subset \Omega_T$,
\begin{align}\label{eq:final-estimate-1}
\eosc_{(x_0,t_0) + Q_r(\theta)} u \leq CM\left(\frac{r}{R}\right)^{\alpha},
\end{align}
where $M =  \frac{1}{2}\esup_{(x_0,t_0) +Q_{R_0}} |u| + \tl(u;(x_0,t_0) +Q_{R_0})$ and $\theta:=M^{1-m}$.

\textbf{When $0<m\leq 1$: } For any $0<r<R<R_0$ with $\mathfrak{Q}_R(\vartheta) \subset Q_{R_0} \subset \Omega_T$,
\begin{align}\label{eq:final-estimate-2}
\eosc_{(x_0,t_0) + \mathfrak{Q}_r(\vartheta)} u \leq CM\left(\frac{r}{R}\right)^{\alpha},
\end{align}
where $M =  \frac{1}{2}\esup_{(x_0,t_0) +Q_{R_0}} |u| + \tl^m(u;(x_0,t_0) +Q_{R_0})$ and $\vartheta:={M^{\frac{m-1}{2s}}}$.
\end{thm}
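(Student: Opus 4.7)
The plan is to establish \cref{eq:final-estimate-1} and \cref{eq:final-estimate-2} by producing a nested sequence of intrinsically scaled cylinders on which a controlling scale $M_j$ decays geometrically, then extracting the Hölder exponent via standard interpolation. I focus on $m\geq 1$ in the geometry $Q_\rho(\theta)$; the case $m\in(0,1]$ is parallel, using \cref{prop:close-to-zero+'} and \cref{lem:deG-general'} in the $\mathfrak{Q}$-geometry. Assume $(x_0,t_0)=(0,0)$ and set $M_0:=M$. The goal is to inductively build $M_{j+1}\leq(1-\eta)M_j$ and $R_{j+1}=\sigma R_j$ for universal $\eta,\sigma\in(0,1)$, together with intrinsic cylinders $Q_j:=Q_{R_j}(M_j^{1-m})\subset Q_{j-1}$, preserving the invariant
\begin{align*}
\tfrac{1}{2}\esup_{Q_j}|u|+\tl(u;Q_j)\leq 2M_j.
\end{align*}
From this the theorem follows: $\eosc_{Q_j}u\leq 4M_j\leq 4(1-\eta)^jM_0$, and for $r\in(0,R_0)$ one selects $j$ with $R_{j+1}\leq r<R_j$ to obtain \cref{eq:final-estimate-1} with $\alpha:=\log(1-\eta)^{-1}/\log\sigma^{-1}$.

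The inductive step rests on a dichotomy in the subcylinder $Q'_j:=Q_{\mathbf{c_0}R_j}(M_j^{1-m})$, where $\mathbf{c_0}$ is the constant from \cref{prop:close-to-0}. In the \textbf{near-zero case}, both measure conditions $|\{u\leq M_j\}\cap Q'_j|>\nu|Q'_j|$ and $|\{u\geq -M_j\}\cap Q'_j|>\nu|Q'_j|$ hold; the tail hypothesis of \cref{prop:close-to-0} is automatically satisfied by the invariant and the choice of $\mathbf{c_0}$, so that proposition yields $|u|\leq 2M_j(1-\kappa)$ on a smaller intrinsic cylinder, giving the required supremum reduction with $M_{j+1}=(1-\kappa)M_j$ and passing to the next step. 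In the \textbf{away-from-zero case}, one of the measure conditions fails; then \cref{lem:deG-general} applied to $u$ or $-u$ places $u$ uniformly away from zero on a smaller intrinsic cylinder, say $u\geq M_j/2$. From this point the factor $|u|^{m-1}$ is trapped between universal multiples of $M_j^{m-1}$, so the equation is uniformly parabolic in the nonlocal sense with ellipticity ratio depending only on \textbf{data}; a classical De Giorgi oscillation-decay iteration using \cref{lem:ee}, with the weighted terms now comparable to the constant $M_j^{m-1}$, yields Hölder decay at all subsequent scales with a universal exponent, concluding the proof in that branch.

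The principal technical difficulty, as flagged in the introduction, is the simultaneous propagation of the tail bound $\tl(u;Q_{j+1})\leq 2M_{j+1}$. Splitting
\begin{align*}
R_{j+1}^{2s}\int_{\mathbb{R}^n\setminus K_{R_{j+1}}}\frac{|u|^m}{|y|^{n+2s}}\,dy \leq R_{j+1}^{2s}\sum_{i=0}^{j}\int_{K_{R_i}\setminus K_{R_{i+1}}}\frac{(2M_i)^m}{|y|^{n+2s}}\,dy + \left(\frac{R_{j+1}}{R_0}\right)^{2s}\tl^m(u;Q_0),
\end{align*}
each annular contribution is bounded by $C(2M_i)^m\sigma^{2s(j-i)}$ using $|y|^{-n-2s}\lesssim R_{i+1}^{-n-2s}$ on the annulus and $M_i\leq (1-\eta)^iM_0$. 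The resulting finite geometric sum has ratio $(1-\eta)^m\sigma^{-2s}$; choosing $\sigma$ small enough relative to $\eta$, $m$, $s$ so that this ratio exceeds $2$ dominates the sum by a universal multiple of its last term $(2M_j)^m$, while the residual initial-tail piece $\sigma^{2s(j+1)}(2M_0)^m$ is absorbed into $M_j^m$ for the same reason. This closes the induction. Finally, \cref{cor:Lio} follows by letting $R_0\to\infty$ in \cref{eq:final-estimate-1}: for globally bounded $u$ the tail $\tl(u;Q_{R_0})$ is finite and $M$ stays bounded, so the factor $(r/R_0)^\alpha\to 0$ forces $u$ to be constant on each compact set.
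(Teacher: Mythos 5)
Your proposal reproduces the paper's overall architecture quite faithfully: the dichotomy near zero versus away from zero, the intrinsic geometry $Q_{R_j}(M_j^{1-m})$ (resp. $\mathfrak{Q}_{\rho_j}(\vartheta_j)$), the use of \cref{prop:close-to-zero+} (via \cref{prop:close-to-0}) in the near-zero branch, the De Giorgi alternative \cref{lem:deG-general} to leave that branch, and especially the annular decomposition of the tail with the geometric restriction on the shrinking factor — all of this is exactly what the paper does in \cref{sec:hol}, including the crucial restriction $\sigma^{2s}\lesssim(1-\eta)^m$ that dominates the tail sum by its last term.

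The weak point is the \emph{away-from-zero} branch. You assert that once $|u|\geq M_j/2$ the factor $|u|^{m-1}$ is pinched so that ``a classical De Giorgi oscillation-decay iteration using \cref{lem:ee}\ldots yields H\"older decay at all subsequent scales.'' This is precisely the part that cannot be dismissed as classical, because (i) the problem is nonlocal, so an oscillation decay result must itself control a tail at every scale, and (ii) the equation is still nonlinear in $\phi(u)$, so the Caccioppoli estimate of \cref{lem:ee} does not straightforwardly close into a scaling-invariant iteration. The paper handles this by an explicit reduction: rescale to a unit cylinder and normalize $U = u/M_l$ so that $1/2\leq|U|\leq 2$, then change the dependent variable to $W=\phi(U)$ to obtain a \emph{linear} nonlocal equation $\de_t\beta(W)-L'W=0$ with a bi-Lipschitz $\beta$ (a bounded, possibly degenerate coefficient in time) and a measurable kernel $K'\eqsim|X-Y|^{-n-2s}$. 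Only after this linearization can one invoke the existing linear nonlocal parabolic De Giorgi theory of \cite{CCV11} (and the treatment of degenerate-in-time coefficients in \cite{Par1,Par2}) to get the universal exponent $\alpha_1$, and the tail and $L^\infty$ bounds that would normally appear on the right-hand side are controlled by \cref{eq:5} and \cref{eq:6}. Without this step, ``run a De Giorgi iteration'' is a placeholder for the bulk of the argument in that branch, not a proof. Spelling out the change of variables $W=\phi(U)$ and explaining why the resulting equation falls into a known class (and why the tail stays universally bounded under the linear iteration) is what closes the away-from-zero case, and your proposal would need to add it.
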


Without loss of generality, we may assume that $(x_0,t_0) = (0,0)$. The proof of \cref{thm:hol2} is given with two cases: $m\geq1$ and $0<m\leq 1$.

\subsection{Porous Media}
Let $u$ be a solution to \cref{eq:main-eq} in $\Omega_T$. Let $Q=Q_R \subset \Omega_T$ and $M>0$ be such that 
\begin{align*}
M \geq  \frac{1}{2}\esup_{Q} |u| + \tl(u;Q). 
\end{align*}
We set $\theta_0=\theta = M^{1-m}$. In case of $m>1$, if $Q_{\rho}(\theta) \nsubseteq Q$, then $\theta\rho \geq R$ which implies that
\begin{align*}
M \leq \left(\frac{\rho}{R}\right)^{\frac{1}{m-1}}
\end{align*}
so that if we cannot find such a $\rho$ as $\rho\rightarrow 0+$, then $u = 0$ and is clearly H\"older continuous and so we are done. Therefore there is no loss of generality in assuming that 
\begin{align*}
Q_{\rho}(\theta) \subset Q.
\end{align*}
Note that the above assumption is always true when $m=1$. In particular, we have 
\begin{align*}
\esup_{Q_{\rho}(\theta)} |u| \leq 2M.
\end{align*}

\subsubsection{Close to $0$ ($m\geq 1$)}
Suppose that 
\begin{align*}
|\{u \leq M\} \cap Q_{\rho}(\theta)| > \nu |Q_{\rho}(\theta)| \quad\text{and}\quad |\{u \geq - M\} \cap Q_{\rho}(\theta)| > \nu |Q_{\rho}(\theta)|
\end{align*}
hold. We have
\begin{align*}
\tl(u,Q) \leq M
\end{align*}
by construction and so choosing $c>0$ small enough so that
\begin{align*}
c^{2s} \leq \pi,
\end{align*}
where $\pi \in (0,1)$ is as in Proposition \ref{prop:close-to-zero+}, and setting $\rho \leq cR$ we get that 
\begin{align*}
\left(\frac{\rho}{R}\right)^{\frac{2s}{m}}\tl(u;Q) \leq \pi^{\frac{1}{m}} M.
\end{align*}
Thus with $\kappa$ from Proposition \ref{prop:close-to-zero+},
\begin{align*}
\esup_{Q_{\rho/2}(\nu\theta)} |u| \leq 2M(1-\kappa).
\end{align*}
We now set
\begin{align}\label{eq:M0}
M_0=M,\quad M_1 = (1-\kappa)M \quad \mbox{and} \quad \theta_1 = M_1^{1-m}.
\end{align}
Choose a $0<\lambda\leq c$ such that 
\begin{align}\label{eq:lamda-inclusion}
\lambda\leq \frac{c}{2}\nu^{\frac{1}{2s}}(1-\kappa)^{\frac{m-1}{2s}}.
\end{align}
We set 
\begin{align}\label{eq:rho0}
\rho_0=\rho,\quad Q_0=Q_{\rho_0}(\theta_0),\quad \rho_1 = \lambda \rho_0 \quad  \quad Q_1 = Q_{\rho_1}(\theta_1) 
\end{align}
Then the choice of $\lambda$ enforces $Q_1 \subset Q_{\rho/2}(\nu\theta)$. Thus, 
\begin{align*}
\esup_{Q_1} |u| \leq 2M_1.
\end{align*}
Suppose that for $i=1,\ldots,l-1$ we have 
\begin{align*}
\rho_i=\lambda^i\rho_0,\quad M_i=(1-\kappa)^iM_0,\quad \theta_i=M_i^{1-m}\quad Q_i=Q_{\rho_i}(\theta_i) \quad \mbox{ and } \quad \esup_{Q_i} |u| \leq 2M_i,
\end{align*}
and we are close in zero in $Q_1,\ldots,Q_{l-1}$ i.e. for every $i = 1,\ldots, l-1$ we have
\begin{align*}
|\{u \leq M_i\} \cap Q_i| > \nu |Q_i| \quad\text{and}\quad |\{u \geq - M\} \cap Q_i| > \nu |Q_i|.
\end{align*}
We further restrict $\lambda$ by 
\begin{align}\label{eq:lambda-convergence}
    \lambda^{2s} \leq \frac{1}{2}(1-\kappa)^m.
\end{align}
for the forthcoming computation. To get a reduction in $Q_l$ we must now ensure that the far off effects are small. Indeed, by decomposing into successive annuli with respect to the radii $\rho_i$ and using the bound on $u$ in $Q_{i-1}$, we may compute 
\begin{align*}
    \tl^m(u,Q_i) &\leq C\rho_i^{2s}\sum_{j=1}^i\frac{M_{j-1}^m}{\rho_j^{2s}}\\
    &\leq CM_i^m\sum_{j=1}^i(1-\kappa)^{(i-j+1)(-m)}\lambda^{2s(i-j)}\\
    &\leq C_{\mathfrak{T}}M_i^m.
\end{align*}
where $C_{\mathfrak{T}}>1$ is a universal constant. Thus we may satisfy the tail condition by further restricting $\lambda$ by
\begin{align}\label{eq:lambda-tail}
\lambda^{2s} \leq \frac{\pi}{C_{\mathfrak{T}}}
\end{align}
Therefore, from \eqref{eq:lamda-inclusion},\eqref{eq:lambda-convergence} and \eqref{eq:lambda-tail}, choosing $\lambda > 0$ small enough so that
\begin{align}\label{eq:lambda-final}
\lambda^{2s} \leq \min \left\{\frac{(1-\kappa)^m}{2},\frac{\pi}{C_{\mathfrak{T}}}, \left(\frac{c}{2}\right)^{2s}\nu(1-\kappa)^{m-1}.\right\},
\end{align}
we can find a sequence of cylinders $Q_1,\ldots,Q_l$ such that if we are away from $0$ in $Q_1,\ldots,Q_{l-1}$ then for $j=1,\ldots,l$ we have
\begin{align*}
\esup_{Q_j} |u| \leq 2M \left(\frac{\rho_j}{R}\right)^{\alpha_0} \quad \mbox{ for } \quad \alpha_0 = \ln_{\lambda}(1-\kappa),
\end{align*}
where $\rho_j = \lambda^jR$, $M_j = (1-\kappa)^jM$, $\theta_j=M_j^{1-m}$ and $Q_j = Q_{\rho_j}(\theta_j)$. Therefore, if we always stay close to $0$, then we are done - we have shown that solutions that are small are H\"older. We now proceed to study the case where the solutions are away from $0$.

\subsubsection{Away from $0$ ($m\geq 1$)}
Suppose that $l$ is the first index such that $u$ is away from $0$ in $Q_l$. Then either 
\begin{align*}
|\{u \leq M_l\} \cap Q_{l}| \leq \nu |Q_{l}| \quad\text{or}\quad 	|\{u \geq M_l\} \cap Q_{l}| \leq \nu |Q_{l}| 
\end{align*}
holds. As above we compute
\begin{equation}\label{eq:5}
\tl^m(u;Q_l) \leq CM_l\left(1+\frac{\lambda^{2s}}{(1-\kappa)^m}+\cdots+\left(\frac{\lambda^{2s}}{(1-\kappa)^m}\right)^{l}\right) \leq 2CM_l.
\end{equation}
Therefore by Lemma \ref{lem:deG-general} and Remark \ref{rem:deG-general-tail} we have
\begin{align*}
|u| \geq \frac{1}{2}M_l   \quad \mbox{ a.e. in } \quad Q_{\tfrac{\rho_l}{2}}(\theta_l).
\end{align*}
By construction, we also have
\begin{align*}
\esup_{Q_l} |u| \leq 2M_l.
\end{align*}
We now perform a change of variables as follows:
\begin{align*}
X = \frac{x}{\rho_l} \quad \mbox{ and } \quad S = \frac{t}{\theta_l\rho_l^{2s}}
\end{align*}
and define
\begin{align*}
U(X,S) = \frac{1}{M_l}u(\rho_lX,\theta_l\rho_l^{2s}S) \quad \mbox{ where } \quad (X,S) \in Q_{\frac{1}{2}}.
\end{align*}
Then 
\begin{equation}\label{eq:6}
\frac{1}{2} \leq |U| \leq 2 \quad \mbox{ in } \quad Q_{1/2}.
\end{equation}
and we observe that $U$ satisfies 
\begin{equation}\label{eq:eq1}
\partial_t U+{L}\phi(U)=0\quad\text{in }Q_{1/2}.
\end{equation}
Finally setting $W = \phi(U)$ and using the bound \eqref{eq:6} we find a $\beta$ such that
\[
\beta(W) \sim W
\]
and
\[
\de_t \beta(W) - L'W = 0
\]
where 
\[
K'(X,Y) \sim |X-Y|^{-n-2s}.
\]
We note that we now have `elliptic' coefficients in time and a linear operator in space. It is well known how to handle `elliptic' and more general degenerate coefficients in time \cite{Par1,Par2} and so we may run linear arguments for the nonlocal part in the spirit of \cite{CCV11} to get oscillation decay:
\[
\eosc_{Q_r} W \leq Cr^{\alpha_1}
\]
for universal constants $C>0$ and $\alpha_1 \in (0,1)$. Note that the usual estimate would come with a $|W|_{\infty;Q_{1/2}}$ and a tail term on the right but we can estimate them from above by universal constants due to \eqref{eq:5} and \eqref{eq:6}. Using \eqref{eq:6} once again and going back to the original variables we get
\[
\eosc_{Q_r(\theta_l)} u \leq CM_l\left(\frac{r}{\rho_l}\right)^{\alpha_1}
\]
(with a different $C>0$ and universal). Therefore combining the close to $0$ and away from $0$ estimates and using that $m >1$ we arrive at
\[
\eosc_{Q_r(\theta)} u \leq CM\left(\frac{r}{R}\right)^{\alpha}
\]
for $\alpha = \min\{\alpha_0,\alpha_1\}$.

\subsection{Fast Diffusion}	
Again let $u$ be a solution to \cref{eq:main-eq} in $\Omega_T$. Let $Q=Q_R \subset \Omega_T$ and $M>0$ be such that 
\begin{align*}
M \geq  \frac{1}{2}\esup_{Q} |u| + \tl^m(u;Q). 
\end{align*}
We set $\vartheta_0^{2s}=\vartheta^{2s} = M^{m-1}$. In case of $m<1$, if $\mathfrak{Q}_{\rho}(\vartheta) \nsubseteq Q$ then $\vartheta\rho \geq R$ which implies that
\begin{align*}
M \leq \left(\frac{\rho}{R}\right)^{\frac{2s}{1-m}}
\end{align*}
so that if we cannot find such a $\rho$ as $\rho\rightarrow 0+$, then $u = 0$ and is clearly H\"older continuous and so we are done. Therefore there is no loss of generality in assuming that 
\begin{align*}
\mathfrak{Q}_{\rho}(\vartheta) \subset Q.
\end{align*}
In particular, we have 
\begin{align*}
\esup_{\mathfrak{Q}_{\rho}(\vartheta)} |u| \leq 2M.
\end{align*}
\subsubsection{Close to $0$ ($0<m\leq 1$)}
Suppose that 
\begin{align*}
|\{u \leq M\} \cap \mathfrak{Q}_{\rho}(\vartheta)| > \nu' |\mathfrak{Q}_{\rho}(\vartheta)| \quad\text{and}\quad |\{u \geq - M\} \cap \mathfrak{Q}_{\rho}(\vartheta)| > \nu' |\mathfrak{Q}_{\rho}(\vartheta)|
\end{align*}
hold. We have $\tl^m(u,Q) \leq M$ by construction, and so choosing $c'>0$ small enough so that $c'^{2s} \leq \pi'$, where $\pi' \in (0,1)$ is as in Proposition \ref{prop:close-to-zero+'} and setting $\rho \leq c'R$ we get that 
\begin{align*}
\left(\frac{\vartheta_0\nu'^{-\frac{1}{2s}}\rho}{R}\right)^{\frac{2s}{m}}\tl(u;Q) \leq \pi'^{\frac{1}{m}}M.
\end{align*}
Thus 
\begin{align*}
\esup_{\mathfrak{Q}_{\rho/2}(\nu'^{-\frac{1}{2s}}\vartheta)} |u| \leq 2M(1-\kappa'),
\end{align*}
where $\kappa'$ is from Proposition \ref{prop:close-to-zero+'}. We now set
\begin{align*}
M_0=M,\quad M_1 = (1-\kappa)M \quad \mbox{and} \quad \vartheta_1^{2s} = M_1^{m-1}
\end{align*}
and choose a $0<\lambda'\leq c'$ such that 
\begin{align*}
\lambda^{2s} \leq \frac{1}{\nu'}(1-\kappa')^{1-m}\left(\frac{c'}{2}\right)^{2s}. 
\end{align*}
We set 
\begin{align*}
\rho_0=\rho,\quad \mathfrak{Q}_0=\mathfrak{Q}_{\rho_0}(\vartheta_0),\quad \rho_1 = \lambda \rho_0 \quad \mbox{ and } \quad \mathfrak{Q}_1 = \mathfrak{Q}_{\rho_1}(\vartheta_1).
\end{align*}
Then the choice of $\lambda'$ enforces $\mathfrak{Q}_1 \subset \mathfrak{Q}_{\rho/2}(\nu'^{-\frac{1}{2s}}\vartheta)$. Thus, 
\begin{align*}
\esup_{\mathfrak{Q}_1} |u| \leq 2M_1.
\end{align*}
Suppose that for $i=1,\ldots,l-1$ we have 
\begin{align*}
\rho_i=\lambda'^i\rho_0,\quad M_i=(1-\kappa')^iM_0,\quad \vartheta^{2s}_i=M_i^{m-1}\quad \mathfrak{Q}_i=\mathfrak{Q}_{\rho_i}(\vartheta_i) \quad \mbox{ and } \quad \esup_{\mathfrak{Q}_i} |u| \leq 2M_i,
\end{align*}
and we are close in zero in $\mathfrak{Q}_1,\ldots,\mathfrak{Q}_{l-1}$ i.e. for every $i = 1,\ldots, l-1$ we have
\begin{align*}
|\{u \leq M_i\} \cap \mathfrak{Q}_i| > \nu |\mathfrak{Q}_i| \quad\text{and}\quad |\{u \geq - M\} \cap \mathfrak{Q}_i| > \nu |\mathfrak{Q}_i|.
\end{align*}
Then computations analogous to the ones for the case $m>1$ show that under the restriction 
\begin{align}\label{eq:lambda-final-2}
\lambda'^{2s} \leq \min \left\{\frac{(1-\kappa')^m}{2},\frac{\pi'}{C'_{\mathfrak{T}}}, \frac{1}{\nu'}\left(\frac{c'}{2}\right)^{2s}(1-\kappa')^{1-m}.\right\},
\end{align}
- where $c'>0$ and $C'_{\mathfrak{T}}>1$ are again universal constants - we can find a sequence of cylinders $\mathfrak{Q}_1,\ldots,\mathfrak{Q}_l$ such that if we are away from $0$ in $\mathfrak{Q}_1,\ldots,\mathfrak{Q}_{l-1}$ then for $j=1,\ldots,l$ we have
\begin{align*}
\esup_{\mathfrak{Q}_j} |u| \leq 2M \left(\frac{\rho_j}{R}\right)^{\alpha_0} \quad \mbox{ for } \quad \alpha_0 = \ln_{\lambda'}(1-\kappa'),
\end{align*}
where $\rho_j = \lambda'^jR$, $M_j = (1-\kappa')^jM$, $\theta_j=M_j^{1-m}$ and $Q_j = Q_{\rho_j}(\theta_j)$. Therefore, if we always stay close to $0$, then we are done - we have shown that solutions that are small are H\"older. We now proceed to study the case where the solutions are away from $0$.

\subsubsection{Away from $0$ ($0<m < 1$)}
Suppose that $l$ is the first index such that $u$ is away from $0$ in $Q_l$. Then either 
\begin{align*}
|\{u \leq M_l\} \cap \mathfrak{Q}_{l}| \leq \nu |\mathfrak{Q}_{l}| \quad\text{or}\quad 	|\{u \geq M_l\} \cap \mathfrak{Q}_{l}| \leq \nu |\mathfrak{Q}_{l}| 
\end{align*}
hold. As above we compute
\begin{equation}\label{eq:8}
\tl^m(u;\mathfrak{Q}_l) \leq CM_l\left(1+\frac{\lambda'^{2s}}{(1-\kappa')^m}+\cdots+\left(\frac{\lambda'^{2s}}{(1-\kappa')^m}\right)^{l}\right) \leq 2CM_l.
\end{equation}
Therefore by Lemma \ref{lem:deG-general'} and Remark \ref{rem:deG-general-tail} we have
\begin{align*}
|u| \geq \frac{1}{2}M_l   \quad \mbox{ a.e. in } \quad \mathfrak{Q}_{\tfrac{\rho_l}{2}}(\vartheta_l).
\end{align*}
By construction, we also have
\begin{align*}
\esup_{\mathfrak{Q}_l} |u| \leq 2M_l.
\end{align*}
We now perform a change of variables as follows. 
\begin{align*}
X = \frac{x}{\rho_l\vartheta_l} \quad \mbox{ and } \quad S = \frac{t}{\rho_l^{2s}}
\end{align*}
and define
\begin{align*}
U(X,S) = \frac{1}{M_l}u(\rho_l\vartheta_lX,\rho_l^{2s}S) \quad \mbox{ where } \quad (X,S) \in Q_{\frac{1}{2}}.
\end{align*}
Then 
\begin{equation}\label{eq:9}
\frac{1}{2} \leq |U| \leq 2 \quad \mbox{ in } \quad Q_{\frac{1}{2}}.
\end{equation}

Therefore, we observe that $U$ satisfies 
\begin{equation}\label{eq:eq11}
\partial_t U+{L}\phi(U)=0\quad\text{in }Q_{1/2}.
\end{equation}
Arguing as in the case $m>1$, we arrive at
\begin{align*}
\eosc_{\mathfrak{Q}_r(\vartheta_l)} u \leq CM_l\left(\frac{r}{\rho_l}\right)^{\alpha_1}
\end{align*}
for universal constants $C>0$ and $\alpha_1 \in (0,1)$. Therefore combining the close to $0$ and away from $0$ estimates and using that $m \in (0,1)$ we arrive at
\begin{align*}
\eosc_{\mathfrak{Q}_r(\vartheta)} u \leq CM\left(\frac{r}{R}\right)^{\alpha}
\end{align*}
for $\alpha = \min\{\alpha_0,\alpha_1\}$. This completes the proof of the case $0<m\leq 1$.

\begin{proof}[Proof of Corollary \ref{cor:Lio}]
Using \eqref{eq:final-estimate-1} and \eqref{eq:final-estimate-2}, for any $0<r<R$ we have 
\begin{align*}
\eosc_{(x_0,t_0)+Q_r(\theta)}u(x,t)\leq C\|u\|_{L^{\infty}(\mathbb{R}^n\times(-\infty,T))}\left(\frac{r}{\rho}\right)^{\alpha}
\end{align*}
for any $(x_0,t_0)+Q_r(\theta)\Subset \mathbb{R}^n\times(-\infty,T)$. Fixing $r$ and sending $R\rightarrow\infty$, we obtain the conclusion. 
\end{proof}


\bibliographystyle{alphaurl}
\bibliography{main}
\end{document}